\def\p{\partial}
\def\tr{\mbox{tr}}
\def\R{\mathbb{R}}
\def\vv<#1>{\langle#1\rangle}
\def\ol{\overline}
\def\bN{\mathbf{N}}
\def\bd{\mathbf{D}}
\def\1{\mathbf{1}}
\def\bA{\mathbf{A}}
\def\XXint#1#2{\setbox0=\hbox{$#1{#2}{\int}$}{#2}\kern-.5\wd0 }
\def\XXint#1#2#3{{\setbox0=\hbox{$#1{#2#3}{\int}$}
     \vcenter{\hbox{$#2#3$}}\kern-.5\wd0}}
\def\vv<#1>{{\left\langle#1\right\rangle}}
\def\sgn{{\rm sgn}}
\newtheorem{thm}{Theorem}[section]
\newtheorem{lem}{Lemma}[section]
\newtheorem{prop}{Proposition}[section]
\newtheorem{cor}{Corollary}[section]
\theoremstyle{definition}
\newtheorem{defn}{Definition}[section]
\theoremstyle{remark}
\numberwithin{equation}{section}
\begin{document}
\title{Higher order Dirichlet-to-Neumann maps on graphs and their eigenvalues}

\author{Yongjie Shi$^1$}
\address{Department of Mathematics, Shantou University, Shantou, Guangdong, 515063, China}
\email{yjshi@stu.edu.cn}
\author{Chengjie Yu$^2$}
\address{Department of Mathematics, Shantou University, Shantou, Guangdong, 515063, China}
\email{cjyu@stu.edu.cn}
\thanks{$^1$Research partially supported by NSF of China with contract no. 11701355. }
\thanks{$^2$Research partially supported by NSF of China with contract no. 11571215.}
\renewcommand{\subjclassname}{%
  \textup{2010} Mathematics Subject Classification}
\subjclass[2010]{Primary 05C50; Secondary 39A12}
\date{}
\keywords{Hodge Theory, Dirichlet-to-Neumann map, Steklov eigenvalue, graph}
\begin{abstract}
In this paper, we first introduce higher order Dirichlet-to-Neumann maps on graphs which can be viewed as a discrete analogue of the corresponding Dirichlet-to-Neumann maps on compact Riemannian manifolds with boundary and a higher order generalization of the Dirichlet-to-Neumann map on graphs introduced by Hua-Huang-Wang\cite{HHW} and Hassannezhad-Miclo \cite{HM}. Then, some Raulot-Savo-type estimates on the eigenvalues of the DtN maps introduced are derived.
\end{abstract}
\maketitle\markboth{Shi \& Yu}{Higher order DtN maps}
\section{Introduction}
Let $(M^n,g)$ be a compact Riemannian manifold with boundary and $u$ be a harmonic function on $M$. Then the classical Steklov operator maps $u|_{\p M}$ to $\frac{\p u}{\p n}$. The eigenvalues of this operator are called the Steklov eigenvalues of $(M^n,g)$. These kinds of notions were first introduced by Steklov \cite{Ku,St} when studying the frequency of liquid sloshing. They were also found to have deep relationship with the Calder\'on inverse problem in applied mathematics (\cite{Ca,Ul}). There has been many works on the Steklov eigenvalues. For more details of the topic, one may refer to the survey article \cite{GP}. Steklov operators are also called Dirichlet-to-Neumann maps and usually abbreviated as DtN maps because they transform Dirchlet boundary data of a harmonic function to its Neumann boundary data.

Recently, a discrete version of Steklov's operator on graphs was introduced by Hua-Huang-Wang \cite{HHW} and Hassannezhad-Miclo \cite{HM} independently. In \cite{HHW}, the authors also obtained some Cheeger-type inequalities for the first eigenvalue of DtN maps on graphs that are discrete analogue of previous results of Escobar \cite{Es} and Jammes \cite{Ja}. In \cite{HM}, the authors also obtained some Cheeger-type inequalities for higher order eigenvalues. In \cite{Pe}, the author obtained some interesting lower bounds for Steklov eigenvalues of graphs. In \cite{HHW2}, the authors introduced the Steklov eigenvalues for infinite subgraphs and obtained some Cheeger-type inequalities for infinite subgraphs. Recently, in \cite{HH}, the authors derived a discrete version of  Brock's estimate (see \cite{Br}).

Note that the classical Steklov operators and eigenvalues for functions were extended to differential forms by Raulot-Savo \cite{RS2}. In fact,  different definition of DtN maps for differential forms were introduced in \cite{BS} and \cite{JL} before Raulot-Savo \cite{RS2}. However, the definitions of DtN maps in \cite{BS} and \cite{JL} were motivated by inverse problems in applied mathematics which are not suitable for spectral analysis. Recently, Karpukhin \cite{Ka} slightly modified the DtN maps in \cite{BS} so that it is suitable for spectral analysis and extended the higher dimensional generalization of Hersch-Payne-Schiffer inequality (see \cite{HPS}) in \cite{YY2} to more general cases. According to all of these works, it is a natural problem to develop a discrete version of higher order DtN maps for graphs. This is the motivation of the paper.

It seems that a discrete version of Hodge theory on simplicial complexes was  known to experts and goes back to the pioneer work \cite{Do} of Dodziuk. For its analogue on graphs, although looks very similar with the  Hodge theory on simplicial complexes, the authors find it only appeared in some recent works in applied mathematics. For example, in \cite{JLYY} (see also \cite{Li}), the authors developed a very interesting ranking method which is called the {\bf HodgeRank} by using the Hodge decomposition on graphs.

In this paper, motivated by the works of Hua-Huang-Wang \cite{HHW} and Raulot-Savo \cite{RS2}, we first introduce  higher order DtN maps on graphs.

For a simple graph $G$, we denote by $V(G)$  and $E(G)$ the set of vertices and the set of edges for $G$ respectively. We also adopt the notion of graph with boundary in \cite{Pe}. A pair $(G,B)$ is said to be a graph with boundary $B$ if $G$ is a simple graph and $B\subset V(G)$, such that any two vertices of $B$ are not adjacent to each other, and each vertex in $B$ is adjacent to some vertcies in $\Omega:=B^c$ which is called the interior of $(G,B)$.

A $k$-form $f$ of a graph $G$ is a skew symmetric function with $k+1$ variables on $V(G)$ such that $f(v_0,v_1,\cdots,v_k)=0$ if $\{v_0,v_1,\cdots,v_k\}$ is not a clique in $G$. The collection of all $k$-forms on $G$ is denoted as $A^k(G)$. The exterior differential $df$ of a $k$-form $f$ is a $(k+1)$-form with
\begin{equation}
df(v_0,v_1,\cdots,v_{k+1})=\sum_{i=0}^{k+1}(-1)^if(v_0,\cdots,\hat v_i,\cdots,v_{k+1})
\end{equation}
for any $(k+2)$-clique $\{v_0,v_1,\cdots,v_{k+1}\}$ of $G$.

For a weight $w$ on a graph $G$, we mean a positive function defined on the collection of cliques for $G$. Define the inner product on $A^k(G)$ as
\begin{equation}
\begin{split}
&\vv<\alpha,\beta>\\
=&\frac{1}{(k+1)!}\sum_{v_0,v_1,\cdots,v_k\in G}\alpha(v_0,v_1,\cdots,v_k)\beta(v_0,v_1,\cdots,v_k)w(v_0,v_1,\cdots,v_k).
\end{split}
\end{equation}
Then, for a finite graph $G$ with weight $w$, define $\delta$ to be the adjoint operator of $d$ according to the inner product just defined. The Hodge Laplacian operator is defined to be $\Delta=\delta d+d\delta$ as usual.

For a finite graph $(G,B)$ with boundary $B$, a clique with one vertex in $B$ is called a boundary clique. The restriction of a $k$-form of $G$ to the collection of boundary cliques is called a boundary $k$-form. The collection of boundary $k$-form is denoted as $A^k(\p\Omega)$. Let $w$ be a weight on $G$. Define the inner product on $A^k(\p\Omega)$ as
\begin{equation}
\begin{split}
&\vv<\alpha,\beta>_{\p\Omega}\\
=&\frac{1}{k!}\sum_{v\in\delta\Omega}\sum_{v_1,v_2,\cdots,v_k\in\Omega}\alpha(v,v_1,v_2,\cdots,v_k)\beta(v,v_1,v_2,\cdots,v_k)w(v,v_1,v_2,\cdots,v_k).
\end{split}
\end{equation}
Then, we have the following two  Green's formulas  for finite weighted graphs with boundary:
\begin{equation}
\vv<\delta\alpha,\beta>_\Omega=\vv<\alpha,d\beta>_{G}-\vv<\mathbf{N}\alpha,\beta>_{\p\Omega}
\end{equation}
and
\begin{equation}
\vv<d\alpha,\beta>_{\Omega}=\vv<\alpha,\delta\beta>_\Omega-\vv<\mathbf{D}\alpha,\beta>_{\p\Omega}.\\
\end{equation}
For the expressions of the operators $\bN$ and $\bd$, see \eqref{eq-def-N} and \eqref{eq-def-D}. Motivated by the Green's formulas above, we introduced three kinds of DtN maps $T_{\delta d}^{(k)}$, $T_{d\delta}^{(k)}$ and $T^{(k)}$ for $k$-forms on graphs.

For the definition of $T_{\delta d}^{(k)}$, for each $\varphi\in A^k(\p\Omega)$, let $E_{\delta d}^{(k)}(\varphi)$ be a solution of the boundary value problem:
\begin{equation}
\left\{\begin{array}{ll}
\delta d\omega=0&\mbox{in $\Omega$}\\
\omega|_{\p\Omega}=\varphi
\end{array}\right.
\end{equation}
and define
\begin{equation}
T_{\delta d}^{(k)}(\varphi)=\bN dE_{\delta d}^{(k)}(\varphi).
 \end{equation}
 The existence of $E_{\delta d}^{(k)}(\varphi)$ is guaranteed by Theorem \ref{thm-existence}. Note that the solution of the boundary value problem may not be unique. So, to make sure that $T_{\delta d}^{(k)}(\varphi)$ is well defined, we must verify that $\bN dE_{\delta d}^{(k)}(\varphi)$ is independent of the choice of solution to the boundary value problem. This is done in Theorem \ref{thm-well-define}.

The definitions for $T_{d\delta}^{(k)}$ and $T^{(k)}$ are similar. Let $E_{d\delta}^{(k)}(\varphi)$ and $E^{(k)}(\varphi)$ be solutions of the boundary value problems:
\begin{equation}
\left\{\begin{array}{ll}
d\delta \omega=0&\mbox{in $\Omega$}\\
\omega|_{\p\Omega}=\varphi
\end{array}\right.
\end{equation}
and
\begin{equation}
\left\{\begin{array}{ll}
\Delta \omega=0&\mbox{in $\Omega$}\\
\omega|_{\p\Omega}=\varphi
\end{array}\right.
\end{equation}
respectively. Then, we define
\begin{equation}T_{d\delta}^{(k)}(\varphi)=\bd \delta E_{d\delta}^{(k)}(\varphi)
\end{equation}and
\begin{equation}T^{(k)}(\varphi)=\bd \delta E^{(k)}(\varphi)+\bN dE^{(k)}(\varphi).\end{equation}

By the Green's formulas above, we have
\begin{equation}
\vv<T_{\delta d}^{(k)}\varphi,\psi>_{\p\Omega}=\vv<dE_{\delta d}^{(k)}(\varphi),dE_{\delta d}^{(k)}(\psi)>_{G},
\end{equation}
\begin{equation}
\vv<T_{d\delta }^{(k)}\varphi,\psi>_{\p\Omega}=\vv<\delta E_{d\delta }^{(k)}(\varphi),\delta E_{d\delta }^{(k)}(\psi)>_{\Omega}
\end{equation}
and
\begin{equation}
\vv<T^{(k)}\varphi,\psi>_{\p\Omega}=\vv<\delta E^{(k)}(\varphi),\delta E^{(k)}(\psi)>_{\Omega}+\vv<d E^{(k)}(\varphi),d E^{(k)}(\psi)>_{G}.
\end{equation}
So, $T_{\delta d}^{(k)}$, $T_{d\delta}^{(k)}$ and $T^{(k)}$ are all nonnegative self-adjoint operators. In fact, the three identities above are the motivations of the definitions of the three kinds of DtN maps. $T^{(k)}$ can be viewed as a discrete analogue of Raulot-Savo's DtN maps for differential forms. It seems that $T_{\delta d}^{(k)}$ corresponds to the modification of Belishev-Sharafutdino's DtN maps in \cite{BS} by Karpukhin in \cite{Ka}. Moreover, it is clear that $T_{\delta d}^{(0)}=T^{(0)}$ is the same as the DtN maps introduced in \cite{HHW} and \cite{HM}.

The second part of this paper is to derive some discrete version of Raulot-Savo-type estimates in \cite{RS1,SY,YY} for subgraphs of integer lattices or standard tessellation of $\R^n$. For a nonnegative self-adjoint operator $T$ on a finite dimensional vector space with inner product, we denote by $\lambda_i(T)$ the $i$-th positive eigenvalue of $T$ (in ascending order and counting multiplicities). Moreover, for a graph $G$ and a subset $\Omega\subset V(G)$, as in \cite{HHW}, we denote by $\tilde\Omega$ the subgraph of $G$ with the set of vertices $\ol\Omega=\Omega\cap\delta\Omega$ and set of edges $E(\Omega,\ol\Omega)$ where $\delta\Omega$ means the set of vertices that are not in $\Omega$ but adjacent to some vertices in $\Omega$ and $E(\Omega,\ol\Omega)$ means the set of edges with one end in $\Omega$ and the other in $\ol\Omega$. It is clear that $(\tilde\Omega,\delta\Omega)$ is a graph with boundary $\delta\Omega$.

Let $\Omega$ be a nonempty finite subset of the integer lattice in $\R^n$ and let $T^{(0)}$ be the DtN maps defined above for $(\tilde\Omega,\delta\Omega)$ with normalized weight. Then, using similar arguments as in \cite{RS1,SY,YY}, we have the following estimate:
\begin{equation}
\lambda_1(T^{(0)})+\lambda_2(T^{(0)})+\cdots+\lambda_n(T^{(0)})\leq \frac{|\delta\Omega|}{\min_{1\leq i\leq n}\{|E_i(\tilde\Omega)|\}}.
\end{equation}
where $E_i(\tilde\Omega)$ means the set of edges in $\tilde\Omega$ that are parallel to $e_i$. Here $\{e_1,e_2,\cdots,e_n\}$ is the standard basis of $\R^n$.

Furthermore, for a nonempty finite subset $\Omega$ of the integer lattices which is also the set of vertices of the graph of standard tessellation of $\R^n$, consider $\tilde\Omega$ as the subgraph of the graph of standard tessellation of $\R^n$ with unit weight. For each $0\leq k\leq n-1$,  we have
\begin{equation}
\sum_{i=1}^{C_n^{k+1}}\lambda_i(T_{\delta d}^{(k)})\leq\frac{nC_{n-1}^{k}2^{n-k-1}|C_{k+2}(\p\Omega)|}{\min_{1\leq i_1<i_2<\cdots<i_{k+1}\leq n}|C_{i_1i_2\cdots i_{k+1}}(\tilde\Omega)|},
\end{equation}
and
\begin{equation}
\sum_{i=1}^{C_n^{k+1}}\lambda_i(T^{(k)})\leq\frac{nC_{n-1}^{k}2^{n-k-1}|C_{k+2}(\p\Omega)|}{\min_{1\leq i_1<i_2<\cdots<i_{k+1}\leq n}|C_{i_1i_2\cdots i_{k+1}}(\tilde\Omega)|}.
\end{equation}
Here $T_{\delta d}^{(k)}$ and $T^{(k)}$ are the DtN maps for $\tilde\Omega$, $C_{k+2}(\p\Omega)$ means the set of boundary $(k+2)$-cliques in $\tilde\Omega$ and $C_{i_1i_2\cdots i_{k+1}}(\tilde\Omega)$ means the set of $(k+2)$-cliques in $\tilde\Omega$ that are parallel to $\{o,e_{j_1},e_{j_1}+e_{j_2},\cdots,e_{j_1}+e_{j_2}+\cdots+e_{j_{k+1}}\}$ with $(j_1,j_2,\cdots,j_{k+1})$ some permutation of $(i_1,i_2,\cdots,i_{k+1})$.

The rest of the paper is organized as follows. In section 2, we introduce some preliminaries on Hodge theory for graphs. In section 3, we introduce higher order DtN maps on graphs with boundary and derive some of their elementary properties. In section 4, we obtain Raulot-Savo-type estimates for subgraphs of integer lattices. In section 5, we obtain Raulot-Savo-type estimates for subgraphs of standard tessellation of $\R^n$. Finally, in the Appendix, we give the proofs of some facts in the preliminary section and some details of computation in the proof of Theorem \ref{thm-gen}.
\section{Preliminaries}
In this section, we recall some preliminaries for Hodge theory on graphs. In this paper, we assume all graphs $G$ be simple graphs (without loops and multiple edges) with $V(G)$ the set of vertices and $E(G)$ the set of edges.

We first introduce forms and  exterior differentials on graphs. Recall the notion of cliques on a graph.
\begin{defn}
A $k$-clique of a graph $G$ is a subset $\{v_1,v_2,\cdots, v_k\}$  of $V(G)$ with $k$ distinct elements such that $\{v_i,v_j\}\in E(G)$ for any $1\leq i<j\leq k$. The collection of all $k$-cliques of $G$ is denoted as $C_k(G)$. It is clear that $C_1(G)=V(G)$ and $C_2(G)=E(G)$.
\end{defn}
Similar to the smooth case, we define tensors, weights, which play the role of metrics in the smooth case, and forms on graphs.
\begin{defn} Let $G$ be a graph.
\begin{enumerate}
\item A function $f:\underbrace{V(G)\times V(G)\times\cdots\times V(G)}_{k+1}\to \R$ is called a $k$-tensor of $G$ if $f(v_0,v_1,\cdots,v_k)=0$ whenever $\{v_0,v_1,\cdots,v_k\}\not\in C_{k+1}(G)$. The collection of all $k$-tensors on $G$ is denoted as $T^k(G)$. Moreover, let $T^*(G)=\bigoplus_{k=0}^\infty T^k(G)$.
\item A $k$-tensor that is symmetric with respect to all of its variables is called a $k$-weight. The collection of all $k$-weights on $G$ is denoted as $W^k(G)$. Moreover, let $W^*(G)=\bigoplus_{k=0}^\infty W^k(G)$
\item A $k$-weight $w$ with $w(v_0,v_1,\cdots,v_k)>0$ whenever $\{v_0,v_1,\cdots,v_k\}\in C_{k+1}(G)$ is called a positive $k$-weight.
\item A $k$-tensor that is skew symmetric with respect to all of its variables is called a $k$-form. The collection of all $k$-forms on  $G$ is denoted as $A^k(G)$. Moreover, let $A^*(G)=\bigoplus_{k=0}^\infty A^k(G)$.
\end{enumerate}
\end{defn}
As in the smooth case, we define the skew-symmetrization operator as follows.
\begin{defn}
The skew-symmetrization operator $\bA:T^{k}(G)\to A^k(G)$ is defined as
\begin{equation}
\bA(f)(v_0,v_1,\cdots,v_k)=\frac{1}{(k+1)!}\sum_{\sigma\in S_{k+1}}\sgn(\sigma)\cdot f(v_{\sigma(0)},v_{\sigma(1)},\cdots,v_{\sigma(k)})
\end{equation}
where $S_{k+1}$ is the group of permutations on $\{0,1,2,\cdots,k\}$.
\end{defn}
On any graph, a natural weight with every clique of weight $1$ is called the unit weight.
\begin{defn}
We define the unit weight $\mathbf 1_G\in W^*(G)$ on the graph $G$ as
\begin{equation}
\mathbf 1_G(v_0,v_1,\cdots,v_k)=1
\end{equation}
if and only if $\{v_0,v_1,\cdots,v_k\}\in C_{k+1}(G)$ for $k=0,1,2,\cdots$.
\end{defn}
Similar to cup product in algebraic topology, we define tensor product on graphs as follows.
\begin{defn}
Let $G$ be a graph, $f\in T^r(G)$ and $g\in T^s(G)$. We define the tensor product of $f$ and $g$ as
\begin{equation}
\begin{split}
&f\otimes g(v_0,v_1,\cdots,v_{r+s})\\
=&\mathbf 1_G(v_0,v_1,\cdots,v_{r+s})f(v_0,v_1,\cdots,v_r)g(v_r,v_{r+1},\cdots,v_{r+s})
\end{split}
\end{equation}
for any $v_0,v_1,\cdots,v_{r+s}\in V(G)$.
\end{defn}
Similar to the smooth case, we define wedge product of forms on graphs as skew-symmetrization of tensor product.
\begin{defn}
Let $G$ be a graph, $\alpha\in A^r(G)$ and $\beta\in A^s(G)$. Then, define the wedge product of $\alpha$ and $\beta$ as
\begin{equation}
\alpha\wedge\beta=\bA(\alpha\otimes\beta).
\end{equation}
\end{defn}
Similar to the boundary operators in algebraic topology, we define the exterior differential operators on graphs as follows.
\begin{defn}
Let $G$ be a graph and $\alpha\in A^k(G)$. Define the exterior differential of $\alpha$ as
\begin{equation}
\begin{split}
&d\alpha(v_0,v_1,\cdots,v_{k+1})\\
=&\mathbf1_G(v_0,v_1,\cdots,v_{k+1})\sum_{j=0}^{k+1}(-1)^j\alpha(v_0,v_1,\cdots,\hat v_j,\cdots, v_{k+1}).
\end{split}
\end{equation}
for any $v_0,v_1,\cdots,v_{k+1}\in V(G)$.
\end{defn}

Similarly as the smooth case, we have the following conclusions for wedge product and exterior differentials on graphs. For their proofs, see the Appendix.
\begin{prop}\label{prop-exterior}Let $G$ be a graph. Then,
\begin{enumerate}\item $d^2=0$;
\item for any $\alpha\in A^r(G)$ and $\beta\in A^s(G)$, $$\alpha\wedge\beta=(-1)^{rs}\beta\wedge\alpha;$$

\item for any $\alpha\in A^r(G)$ and $\beta\in A^s(G)$, $$d(\alpha\wedge\beta)=d\alpha\wedge\beta+(-1)^r\alpha\wedge d\beta;$$
\item for any $\alpha\in A^p(G),\beta\in A^q(G)$ and $\gamma\in A^r(G)$ with $d\alpha=d\beta=d\gamma=0$,
    $$(\alpha\wedge\beta)\wedge\gamma=\alpha\wedge(\beta\wedge\gamma).$$
\end{enumerate}
\end{prop}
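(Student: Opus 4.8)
My plan is to regard all four assertions as graph versions of classical identities of exterior algebra and simplicial cochain theory, the only new feature being the cut-off factor $\mathbf1_G$ that occurs in the definitions of $\otimes$, $\wedge$ and $d$. I would first isolate the general principle that makes this factor harmless: $\mathbf1_G(v_0,v_1,\cdots,v_m)$ is symmetric in its arguments, so it passes through every application of $\bA$; and any subset of a clique is again a clique, so whenever the outermost occurrence of $\mathbf1_G$ equals $1$, every inner one does too. Hence, for each of (1)--(4) it is enough to evaluate both sides on a fixed tuple $(v_0,v_1,\cdots,v_N)$ of vertices: if $\{v_0,v_1,\cdots,v_N\}$ is not a clique, then every term on each side vanishes (each side is a skew-symmetrization of a tensor carrying the factor $\mathbf1_G(v_0,v_1,\cdots,v_N)$), whereas if it is a clique, then all the $\mathbf1_G$'s equal $1$ and the graph formulas reduce verbatim to the corresponding classical formulas on the complete graph on $\{v_0,v_1,\cdots,v_N\}$, i.e.\ on the cochains of a simplex.

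With this reduction, (1) is just the usual iterated alternating sum for $d^2$, in which every value of $\alpha$ with two arguments deleted appears twice with opposite sign. For (3) I would extend $d$ to all tensors by the same defining formula and check the two elementary facts that $d$ commutes with $\bA$ on tensors and that the (Alexander--Whitney) product satisfies $d(\alpha\otimes\beta)=d\alpha\otimes\beta+(-1)^r\alpha\otimes d\beta$; since $d$ agrees with the exterior differential on forms, this gives
\[
d(\alpha\wedge\beta)=d\,\bA(\alpha\otimes\beta)=\bA\,d(\alpha\otimes\beta)=d\alpha\wedge\beta+(-1)^r\alpha\wedge d\beta .
\]
For (2), I would use the order-reversing permutation $\rho$ of $\{0,1,\cdots,r+s\}$, for which $\sgn\rho=(-1)^{\binom{r+s+1}{2}}$; the skew-symmetry of $\alpha$ and $\beta$ gives $(\beta\otimes\alpha)\circ\rho=(-1)^{\binom{r+1}{2}+\binom{s+1}{2}}(\alpha\otimes\beta)$, so applying $\bA$ yields $\beta\wedge\alpha=(-1)^{\binom{r+1}{2}+\binom{s+1}{2}+\binom{r+s+1}{2}}\alpha\wedge\beta$, and the congruence $\binom{r+1}{2}+\binom{s+1}{2}+\binom{r+s+1}{2}\equiv rs\pmod2$ finishes the proof.

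Item (4) is the only one that genuinely uses a hypothesis absent in the smooth case, and it is where the real work lies. Here the plan is to exploit that the tensor product \emph{is} associative, so that $\alpha\otimes\beta\otimes\gamma$ is unambiguous and $(\alpha\wedge\beta)\wedge\gamma=\bA\bigl(\bA(\alpha\otimes\beta)\otimes\gamma\bigr)$. Expanding the inner $\bA$ as an average over permutations $\pi$ of the $p+q+1$ arguments of $\alpha\otimes\beta$, I would write $\bigl((\alpha\otimes\beta)\circ\pi\bigr)\otimes\gamma=(\alpha\otimes\beta\otimes\gamma)\circ\hat\pi+D_\pi$, where the discrepancy $D_\pi$ comes only from which vertex occupies the slot shared by $\alpha\otimes\beta$ and $\gamma$, and vanishes when $\pi$ fixes that slot. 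Summing the first term over $\pi$ gives exactly $\bA(\alpha\otimes\beta\otimes\gamma)$, so $(\alpha\wedge\beta)\wedge\gamma=\bA(\alpha\otimes\beta\otimes\gamma)+R$ with $R$ assembled from the $D_\pi$'s; symmetrically $\alpha\wedge(\beta\wedge\gamma)=\bA(\alpha\otimes\beta\otimes\gamma)+R'$, so the associator equals $R-R'$.

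To conclude I would analyze $R$ (and $R'$) termwise. Each $D_\pi$ has the shape $P_\pi\cdot\bigl(\gamma(v,\cdots)-\gamma(v',\cdots)\bigr)$, where $P_\pi$ is a reordered product of copies of $\alpha$ and $\beta$ and the two evaluations of $\gamma$ differ only in the first argument; expanding $d\gamma$ on the pair $v,v'$ rewrites the difference as $\pm d\gamma(\cdots)$ (manifestly carrying a factor $d\gamma$) plus terms in which $\gamma$ is evaluated with both $v$ and $v'$ among its arguments. Since one of $v,v'$ already occurs inside $P_\pi$, after the outer $\bA$ and the remaining sum over $\pi$ these last terms recombine into expressions divisible by $d\alpha$ or by $d\beta$. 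Treating $R'$ in the mirror-image way, every term of $R-R'$ is divisible by one of $d\alpha$, $d\beta$, $d\gamma$, and the associator therefore vanishes once $d\alpha=d\beta=d\gamma=0$. The real obstacle is exactly this last step: the sign bookkeeping through the two nested skew-symmetrizations and the verification that the ``double-vertex'' terms combine as asserted; I would present these computations in full in an appendix. As sanity checks, for $p=1$, $q=r=0$ a direct computation gives $\bigl((\alpha\wedge\beta)\wedge\gamma-\alpha\wedge(\beta\wedge\gamma)\bigr)(u,v)=-\tfrac14\,\alpha(u,v)\,d\beta(u,v)\,d\gamma(u,v)$ on each edge $\{u,v\}$, while for $p=q=0$, $r=1$ it gives $+\tfrac14\,\gamma(u,v)\,d\alpha(u,v)\,d\beta(u,v)$; the first exhibits the mechanism and the second shows that the double-vertex terms really do produce the $d\alpha$, $d\beta$ factors.
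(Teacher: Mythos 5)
Your plan follows essentially the same route as the paper. Parts (1)--(3) are correct as you outline them: the reduction to a fixed clique via the symmetry of $\mathbf 1_G$ and heredity of cliques is exactly the right way to dispose of the cut-off factor, your sign computation for (2) matches the paper's (the paper uses the same order-reversing permutation and the same congruence $\binom{r+1}{2}+\binom{s+1}{2}+\binom{r+s+1}{2}\equiv rs \pmod 2$, written out as $\tfrac{r^2+r+s^2+s+(r+s)^2+(r+s)}{2}$), and your derivation of (3) from the two facts that $\bA$ commutes with $d$ on tensors and that $\otimes$ satisfies the Leibniz rule is a slightly cleaner packaging of the paper's hand computation, whose terms $I_2=-I_3$ are precisely the boundary cancellation in the cup-product Leibniz rule. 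For (4) your strategy is again the paper's: the paper shows that \emph{each} of $(\alpha\wedge\beta)\wedge\gamma$ and $\alpha\wedge(\beta\wedge\gamma)$ equals $\bA(\alpha\otimes\beta\otimes\gamma)$ once $d\alpha=d\beta=d\gamma=0$, which is your decomposition $\bA(\alpha\otimes\beta\otimes\gamma)+R$ with $R=0$; your identification of the discrepancy as coming only from the shared slot, and your two low-degree sanity checks (both of which I verified are numerically correct, including the factors $\mp\tfrac14$), show you understand the mechanism. The one thing you have not actually done is the step you yourself flag as the real obstacle: verifying that the discrepancy terms recombine into multiples of $d\alpha$, $d\beta$, $d\gamma$ with consistent signs. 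That verification is not a formality --- it is the entire content of the paper's appendix computation (the decomposition $I_1=J_1+J_2+J_3$ with $J_2=-J_1$ and $J_3=pI_2$, etc.), and until it is written out your part (4) is a correct and well-motivated plan rather than a proof.
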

Because of $d^2=0$, we have the following chain complex which is the analogue of the de Rham complex of differential forms:
\begin{equation}
0\to A^{0}(G)\to A^1(G)\to\cdots \to A^r(G)\to A^{r+1}(G)\to\cdots.
\end{equation}
We call this the de Rham complex of the graph $G$. Let $Z^r(G)$ be the kernel of $d:A^r(G)\to A^{r+1}(G)$ and $B^r(G)=d(A^{r-1}(G))$. The cohomology groups:
\begin{equation}
H_{dR}^r(G)=\frac{Z^r(G)}{B^r(G)}\ \ (r=0,1,2,\cdots)
\end{equation}
are called  the de Rham cohomology groups of $G$. The dimension of $H_{dR}^r(G)$ denoted as $b_r(G)$ is called the $r$-th Betti number of $G$. It is clear that $b_0(G)$ is the number of connected components of $G$ when $G$ is a finite graph. In fact, if we associate a finite graph $G$ with a simplical  complex $K(G)$ with each $(k+1)$-clique in $G$ corresponding to a $k$-simplex of $K(G)$ such that a $k$-simplex is a face of a $l$-simplex if and only if the $(k+1)$-clique corresponding to the $k$-simplex is a subset of the $(l+1)$-clique corresponding the $l$-simplex, then $b_i(G)=b_i(K(G))$.

Next, we introduce Hodge theory on graphs.
\begin{defn}
Let $G$ be a graph and $w\in W^*(G)$ be a positive weight on $G$. We call $(G,w)$ a weighted graph.
\end{defn}
A weight on the edges of a graph induces weights on any clique in the following way which is sometimes call the normalized weight.
\begin{defn}
Let $G$ be a graph and $w\in W^1(G)$ be a positive 1-weight on $G$. Then, $w$ induces a positive weight $w^*\in W^*(G)$ on $G$ by
\begin{equation}
w^*(v_0)=\sum_{v_1\in V}w(v_0,v_1)
\end{equation}
for any $v_0\in V(G)$ and
\begin{equation}
w^*(v_0,v_1,\cdots,v_k)=\mathbf 1_G(v_0,v_1,\cdots,v_k)\sum_{0\leq i<j\leq k}w(v_i,v_j)
\end{equation}
for any $v_0,v_1,\cdots,v_k\in V(G)$ and $k=2,3,\cdots$. We simply denote the weighted graph $(G,w^*)$ as $(G,w)$ and call $(G,w)$ an edge-weighted graph.
\end{defn}

Similar to the smooth case, we define inner products for $k$-forms on graphs as follows.
\begin{defn}
Let $(G,w)$ be a finite weighted graph. Define the inner product on $A^k(G)$ as
\begin{equation}
\begin{split}
&\vv<\alpha,\beta>\\
=&\frac{1}{(k+1)!}\sum_{v_0,v_1,\cdots,v_k\in G}\alpha(v_0,v_1,\cdots,v_k)\beta(v_0,v_1,\cdots,v_k)w(v_0,v_1,\cdots,v_k).
\end{split}
\end{equation}
\end{defn}
Let $\delta:A^{k+1}(G)\to A^k(G)$ be the adjoint operator of $d$. By direct computation, one has the following identity.
\begin{prop}[see \cite{Li}]\label{prop-delta}
Let $(G,w)$ be a finite weighted graph. Then
\begin{equation}
\begin{split}
&\delta\alpha(v_0,v_1,\cdots,v_k)\\
=&\frac1{w(v_0,v_1,\cdots,v_k)}\sum_{v\in V(G)}\alpha(v,v_0,v_1,\cdots,v_{k}){w(v,v_0,v_1,\cdots,v_k)}
\end{split}
\end{equation}
for any $\alpha\in A^{k+1}(G)$.
\end{prop}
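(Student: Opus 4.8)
The plan is to compute $\langle d\beta,\alpha\rangle$ directly from the definitions for an arbitrary $\beta\in A^k(G)$ and the given $\alpha\in A^{k+1}(G)$, and then to rewrite the outcome as $\langle\beta,F\rangle$ for an explicit $F\in A^k(G)$. Since $\delta$ is by definition the adjoint of $d$ and the inner product on $A^k(G)$ is positive definite, this will force $\delta\alpha=F$, and $F$ will be exactly the right-hand side of the asserted identity.

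First I would expand, using the formula for the inner product on $A^{k+1}(G)$ and for the exterior differential,
\[
\langle d\beta,\alpha\rangle=\frac{1}{(k+2)!}\sum_{v_0,\cdots,v_{k+1}\in V(G)}\sum_{j=0}^{k+1}(-1)^j\beta(v_0,\cdots,\hat v_j,\cdots,v_{k+1})\,\alpha(v_0,\cdots,v_{k+1})\,w(v_0,\cdots,v_{k+1}),
\]
dropping the factor $\mathbf 1_G$ from $d\beta$ since $\alpha$ and $w$ already vanish off cliques. This breaks the sum into $k+2$ pieces, one for each $j$. In the $j$-th piece I would move the argument $v_j$ to the front of both $\alpha$ and $w$: because $w$ is symmetric this is free, while $\alpha$ is skew-symmetric, so it produces the sign $(-1)^j$, which cancels the $(-1)^j$ already present. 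After renaming $v_j$ to $v$ and the remaining arguments to $u_0,\cdots,u_k$, every one of the $k+2$ pieces is literally the same sum, so
\[
\langle d\beta,\alpha\rangle=\frac{1}{(k+1)!}\sum_{u_0,\cdots,u_k}\beta(u_0,\cdots,u_k)\Bigl(\sum_{v\in V(G)}\alpha(v,u_0,\cdots,u_k)\,w(v,u_0,\cdots,u_k)\Bigr),
\]
where the $k+2$ identical pieces have turned the prefactor $\tfrac1{(k+2)!}$ into $\tfrac1{(k+1)!}$.

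To finish, I would observe that $G(u_0,\cdots,u_k):=\sum_{v\in V(G)}\alpha(v,u_0,\cdots,u_k)\,w(v,u_0,\cdots,u_k)$ is skew-symmetric in $u_0,\cdots,u_k$ and vanishes whenever $\{u_0,\cdots,u_k\}$ is not a clique (for then $\{v,u_0,\cdots,u_k\}$ is not a clique, so $\alpha$ already vanishes). Hence $F:=G/w$, set to $0$ off cliques and well defined on cliques since $w>0$ there, lies in $A^k(G)$ and satisfies $G=Fw$ identically; the last display then reads $\langle d\beta,\alpha\rangle=\langle\beta,F\rangle$ for all $\beta\in A^k(G)$, so $\delta\alpha=F$ by nondegeneracy of the inner product, which is the stated formula. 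I expect the only delicate point to be the sign bookkeeping when $v_j$ is shifted to the front, together with the verification that the $k+2$ pieces coincide after relabelling; everything else is purely formal.
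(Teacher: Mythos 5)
Your proposal is correct and is exactly the ``direct computation'' that the paper alludes to without writing out (it simply cites \cite{Li}): expanding $\vv<d\beta,\alpha>$, using skew-symmetry of $\alpha$ and symmetry of $w$ to cancel the signs $(-1)^j$ and collapse the $k+2$ identical terms, and reading off the adjoint from positive-definiteness of the inner product. The sign bookkeeping and the verification that the resulting $F$ lies in $A^k(G)$ are both handled correctly, so nothing is missing.
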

Let $\Delta=d\delta+\delta d:A^k(G)\to A^k(G)$ which is called the Hodge Laplacian operator on $G$. Forms in the kernel of $\Delta$ are call harmonic forms. The collection of harmonic $k$-forms on $G$ is denoted as $\mathcal H^k(G)$. By using simple linear algebra, it is not hard to see that a discrete version of Hodge decomposition theorem holds for finite weighted graphs.
\begin{thm}
Let $(G,w)$ be a finite weighted graph. Then
\begin{equation}
A^r(G)=d(A^{r-1}(G))\oplus \mathcal H^r(G)\oplus\delta(A^{r+1}(G))
\end{equation}
which is an orthogonal decomposition of $A^r(G)$, for $r=0,1,2,\cdots$. As a consequence,
\begin{equation}
H_{dR}^{r}(G)\cong \mathcal H^r(G)
\end{equation}
for $r=0,1,2,\cdots.$
\end{thm}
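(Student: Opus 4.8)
This is the standard finite-dimensional Hodge decomposition, and the plan is to obtain it by pure linear algebra on the inner product spaces $A^r(G)$, using only $d^2=0$ (Proposition \ref{prop-exterior}) and the fact that $\delta$ is by definition the adjoint of $d$ (so that, dually, $\delta^2=0$ as well, since $\vv<\delta^2\alpha,\beta>=\vv<\alpha,d^2\beta>=0$). First I would record that for all $\alpha,\beta\in A^r(G)$,
\[
\vv<\Delta\alpha,\beta>=\vv<d\delta\alpha,\beta>+\vv<\delta d\alpha,\beta>=\vv<\delta\alpha,\delta\beta>+\vv<d\alpha,d\beta>=\vv<\alpha,\Delta\beta>,
\]
so $\Delta$ is self-adjoint and nonnegative, and $\Delta\alpha=0$ if and only if $d\alpha=0$ and $\delta\alpha=0$; equivalently $\mathcal H^r(G)=\ker d\cap\ker\delta$ inside $A^r(G)$.

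Next, since $A^r(G)$ is finite-dimensional and $\Delta$ is self-adjoint, $A^r(G)=\ker\Delta\oplus\Delta(A^r(G))$ orthogonally, and because $\Delta\alpha=d(\delta\alpha)+\delta(d\alpha)$ we have $\Delta(A^r(G))\subseteq d(A^{r-1}(G))+\delta(A^{r+1}(G))$; hence $A^r(G)=\mathcal H^r(G)+d(A^{r-1}(G))+\delta(A^{r+1}(G))$. Then I would check that these three subspaces are pairwise orthogonal: $\vv<d\alpha,\delta\beta>=\vv<d^2\alpha,\beta>=0$ gives $d(A^{r-1}(G))\perp\delta(A^{r+1}(G))$, and for $h\in\mathcal H^r(G)$ one has $\vv<h,d\alpha>=\vv<\delta h,\alpha>=0$ and $\vv<h,\delta\beta>=\vv<dh,\beta>=0$. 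Since a sum of pairwise orthogonal subspaces is automatically direct, this yields the asserted orthogonal decomposition $A^r(G)=d(A^{r-1}(G))\oplus\mathcal H^r(G)\oplus\delta(A^{r+1}(G))$.

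For the cohomological consequence, take $\alpha\in Z^r(G)$ and decompose $\alpha=d\beta+h+\delta\gamma$ using the decomposition just established; applying $d$ and using $d^2=0$ gives $d\delta\gamma=0$, whence $\vv<\delta\gamma,\delta\gamma>=\vv<d\delta\gamma,\gamma>=0$ and so $\delta\gamma=0$. Thus $Z^r(G)=d(A^{r-1}(G))\oplus\mathcal H^r(G)=B^r(G)\oplus\mathcal H^r(G)$, and passing to the quotient gives $H_{dR}^r(G)=Z^r(G)/B^r(G)\cong\mathcal H^r(G)$. The whole argument is routine; the only place needing a moment of care is the bookkeeping that extracts all the orthogonality relations — and hence the directness of the sum — from $d^2=0$ and the adjointness of $\delta$. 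There is no genuine obstacle.
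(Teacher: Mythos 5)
Your proof is correct, and it is precisely the ``simple linear algebra'' argument that the paper invokes without writing out: the paper gives no proof of this theorem, and your argument (self-adjointness and nonnegativity of $\Delta$, the orthogonal splitting $A^r(G)=\ker\Delta\oplus\Delta(A^r(G))$, the pairwise orthogonality relations derived from $d^2=0$ and adjointness, and the identification $Z^r(G)=B^r(G)\oplus\mathcal H^r(G)$) supplies it completely. No gaps.
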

\section{Higher order Dirichlet-to-Neumann maps}
In this section, we define higher order DtN maps for graphs with boundary. First, we adopt the notations in \cite{HHW}. Let $G$ be a  graph, and $\Omega$ be a subset of $V(G)$. Let
\begin{equation}
\delta\Omega=\{x\in V(G)\setminus \Omega\ |\ x\ \mbox{is adjacent to some vertices in $\Omega$}. \}
\end{equation}
which is called the vertex boundary of $\Omega$. Let $\ol\Omega=\Omega\cup\delta\Omega$. For any two subsets $A$ and $B$ of $V(G)$, we denote by $E(A,B)$ the set of edges in $G$ with one end in $A$ and the other in $B$. Denote by $\tilde\Omega$ the subgraph of $G$ with $\ol\Omega$ the set of vertices and $E(\Omega,\ol\Omega)$ the set of edges. We will also use $\Omega$ to denote the subgraph of $G$ spanned by $\Omega$. Denote $E(\Omega,\delta\Omega)$ by $\p\Omega$, which is called edge boundary of $\Omega$.

Moreover, let
\begin{equation}
C_k(\p\Omega)=\{\{v_1,v_2,\cdots,v_k\}\in C_k(\tilde\Omega)\ :\ v_1\in \delta\Omega,v_2,\cdots,v_k\in \Omega\}
\end{equation}
which is called the $k$-clique boundary of $\Omega$. It is clear that $C_1(\p\Omega)=\delta\Omega$ and $C_2(\p\Omega)=\p\Omega$. Let
\begin{equation}
\begin{split}
A^{k}(\p\Omega)=\{f:&\delta\Omega\times\underbrace{\Omega\times\cdots\times\Omega}_{k}\to\R\ :\ f(v_0,v_1,v_2,\cdots,v_k)=0\\
 &\mbox{when $\{v_0,v_1,\cdots,v_k\}\not\in C_{k+1}(\p\Omega)$ and $f$ is skew symmetry}\\
 & \mbox{with respect to the last $k$ variables.} \}
\end{split}
\end{equation}
An elements in $A^{k}(\p\Omega)$ is called a boundary $k$-form  of $\Omega$. For a weighted graph, we define the inner product on $A^k(\p\Omega)$ as the follows:
\begin{equation}
\begin{split}
&\vv<\alpha,\beta>_{\p\Omega}\\
=&\frac{1}{k!}\sum_{v\in\delta\Omega}\sum_{v_1,v_2,\cdots,v_k\in\Omega}\alpha(v,v_1,v_2,\cdots,v_k)\beta(v,v_1,v_2,\cdots,v_k)w(v,v_1,v_2,\cdots,v_k).
\end{split}
\end{equation}
More generally, for  $S\subset C_{k+1}(\tilde\Omega)$, we define
\begin{equation}
\vv<\alpha,\beta>_S=\sum_{\{v_0,v_1,\cdots,v_k\}\subset S}\alpha(v_0,v_1,\cdots,v_k)\beta(v_0,v_1,\cdots,v_k)w(v_0,v_1,\cdots,v_k)
\end{equation}
for any $\alpha,\beta\in A^k(\tilde\Omega)$.
It is clear that
\begin{equation}
\vv<\alpha,\beta>_{\tilde\Omega}=\vv<\alpha,\beta>_{\Omega}+\vv<\alpha,\beta>_{\p\Omega}
\end{equation}
for any $\alpha,\beta\in A^k(\tilde\Omega)$.

We also adopt the notion of graphs with boundary in \cite{Pe}. A pair $(G,B)$ is said to be a graph with boundary if
\begin{enumerate}
\item $G$ is a simple graph;
\item $B\subset V(G)$ with $\delta\Omega=B$ and $E(B,B)=\emptyset$ where $\Omega=V(G)\setminus B$ is called the interior of $G$ and  $B$ is called the boundary of $G$.
\end{enumerate}It is clear that $(\tilde \Omega,\delta\Omega)$ in the notations above adopted from \cite{HHW} is a graph with boundary.

For weighted graphs with boundary, we have the following Green's formulas.
\begin{prop}
Let $(G,B,w)$ be a finite weighted graph with  boundary and with $\Omega$ its interior. Then, for any $\alpha\in A^{k+1}(G)$ and $\beta\in A^k(G)$,
\begin{equation}\label{eq-delta}
\vv<\delta\alpha,\beta>_\Omega=\vv<\alpha,d\beta>_{G}-\vv<\mathbf{N}\alpha,\beta>_{\p\Omega}
\end{equation}
and for any $\alpha\in A^{k-1}(G)$ and $\beta\in A^k(G)$
\begin{equation}\label{eq-d}
\vv<d\alpha,\beta>_{\Omega}=\vv<\alpha,\delta\beta>_\Omega-\vv<\mathbf{D}\alpha,\beta>_{\p\Omega}\\
\end{equation}
where $\mathbf{N}\alpha, \mathbf{D}\alpha\in A^{k}(\p\Omega)$ are given by
\begin{equation}\label{eq-def-N}
\mathbf{N}\alpha(v,v_1,v_2,\cdots,v_k)=\frac{\sum_{u\in \Omega}\alpha(u,v,v_1,\cdots,v_k)w(u,v,v_1,\cdots,v_{k})}{w(v,v_1,v_2,\cdots,v_k)}
\end{equation}
and
\begin{equation}\label{eq-def-D}
\mathbf{D}\alpha(v,v_1,v_2,\cdots,v_k)=\alpha(v_1,v_2,\cdots,v_k)
\end{equation}
respectively, when $\{v,v_1,v_2,\cdots,v_k\}\in C_{k+1}(\p\Omega)$.
\end{prop}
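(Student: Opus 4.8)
The plan is to deduce both Green's formulas from the global adjointness $\vv<d\beta,\alpha>_G=\vv<\beta,\delta\alpha>_G$ (valid since $G$ is finite) by splitting the $G$-inner product into its interior and boundary parts and identifying the boundary contribution. The structural fact that makes this work is that, because $E(B,B)=\emptyset$, every $(k+1)$-clique of $G$ either lies entirely in $\Om$ or has exactly one vertex in $B=\delta\Om$; hence $C_{k+1}(G)=C_{k+1}(\Om)\sqcup C_{k+1}(\p\Om)$, so (using that $d\alpha$ and $\beta$ are skew and $w$ is symmetric, which turns the $\frac{1}{(k+1)!}$-weighted ordered sum into a sum over cliques) the inner product on $A^k(G)$ coincides with $\vv<\cdot,\cdot>_{\tilde\Om}$ and $\vv<\cdot,\cdot>_G=\vv<\cdot,\cdot>_\Om+\vv<\cdot,\cdot>_{\p\Om}$, where $\vv<\cdot,\cdot>_{\p\Om}$ is the boundary inner product defined above. (Recall $\tilde\Om=G$ for a graph with boundary.)

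For \eqref{eq-delta} I would first show that for $\alpha\in A^{k+1}(G)$ the restriction of $\delta\alpha$ to boundary $(k+1)$-cliques is exactly $\mathbf{N}\alpha$. By Proposition~\ref{prop-delta}, $\delta\alpha(v_0,\dots,v_k)=\frac{1}{w(v_0,\dots,v_k)}\sum_{v\in V(G)}\alpha(v,v_0,\dots,v_k)w(v,v_0,\dots,v_k)$; when $\{v_0,\dots,v_k\}\in C_{k+1}(\p\Om)$ with $v_0\in\delta\Om$, the summand vanishes unless $v$ is adjacent to $v_0$, which forces $v\in\Om$ since $E(\delta\Om,\delta\Om)=\emptyset$, so the sum collapses to $\sum_{u\in\Om}$ and we recover \eqref{eq-def-N}. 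Then $\vv<\alpha,d\beta>_G=\vv<\delta\alpha,\beta>_G=\vv<\delta\alpha,\beta>_\Om+\vv<\delta\alpha,\beta>_{\p\Om}=\vv<\delta\alpha,\beta>_\Om+\vv<\mathbf{N}\alpha,\beta>_{\p\Om}$, which rearranges to \eqref{eq-delta}.

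For \eqref{eq-d}, given $\beta\in A^k(G)$ I would introduce the truncation $\beta_0\in A^k(G)$ with $\beta_0=\beta$ on $C_{k+1}(\Om)$ and $\beta_0=0$ on $C_{k+1}(\p\Om)$, so that for $\alpha\in A^{k-1}(G)$ one has $\vv<d\alpha,\beta>_\Om=\vv<d\alpha,\beta_0>_G=\vv<\alpha,\delta\beta_0>_G$. The same clique argument as before shows $\delta\beta_0$ vanishes on boundary $k$-cliques, hence $\vv<\alpha,\delta\beta_0>_G=\vv<\alpha,\delta\beta_0>_\Om$; and on interior $k$-cliques $(\delta\beta-\delta\beta_0)(v_0,\dots,v_{k-1})=\frac{1}{w(v_0,\dots,v_{k-1})}\sum_{v\in\delta\Om}\beta(v,v_0,\dots,v_{k-1})w(v,v_0,\dots,v_{k-1})$. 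Unwinding the definitions of $\mathbf{D}$ and of $\vv<\cdot,\cdot>_{\p\Om}$, and again using skew-symmetry of $\alpha,\beta$ together with symmetry of $w$ to pass between ordered sums over $\Om^k$ (carrying the $1/k!$) and sums over interior $k$-cliques, one checks $\vv<\mathbf{D}\alpha,\beta>_{\p\Om}=\vv<\alpha,\delta\beta-\delta\beta_0>_\Om$. Combining, $\vv<d\alpha,\beta>_\Om=\vv<\alpha,\delta\beta_0>_\Om=\vv<\alpha,\delta\beta>_\Om-\vv<\alpha,\delta\beta-\delta\beta_0>_\Om=\vv<\alpha,\delta\beta>_\Om-\vv<\mathbf{D}\alpha,\beta>_{\p\Om}$.

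The routine-looking but genuinely delicate point is the identity $\vv<\mathbf{D}\alpha,\beta>_{\p\Om}=\vv<\alpha,\delta\beta-\delta\beta_0>_\Om$: one must keep the combinatorial normalizations consistent ($1/k!$ versus $1/(k+1)!$, ordered tuples versus cliques, and the convention that the first slot of a boundary form lies in $\delta\Om$), and remember throughout that Proposition~\ref{prop-delta} is being applied to $\delta$ on the whole graph $G$, not on the subgraph spanned by $\Om$. Everything else is formal manipulation with the Hodge inner product.
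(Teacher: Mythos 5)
Your proposal is correct and follows essentially the same route as the paper: both identities are deduced from the global adjointness $\vv<d\cdot,\cdot>_G=\vv<\cdot,\delta\cdot>_G$ together with the decomposition of cliques into interior and boundary ones forced by $E(B,B)=\emptyset$, and your identification of $\delta\alpha|_{C_{k+1}(\p\Omega)}$ with $\mathbf{N}\alpha$ is exactly the paper's first step. The only (cosmetic) difference is in the second identity, where you localize $\beta$ by a cutoff $\beta_0$ and track $\delta\beta-\delta\beta_0$, whereas the paper expands $\vv<d\alpha,\beta>_{\p\Omega}$ directly and splits off the $j=0$ term as $\vv<\mathbf{D}\alpha,\beta>_{\p\Omega}$; the underlying computation is the same.
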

\begin{proof} Note that
\begin{equation}
\begin{split}
\vv<\alpha,d\beta>_{G}=\vv<\delta \alpha,\beta>_{G}=\vv<\delta\alpha,\beta>_{\Omega}+\vv<\delta\alpha,\beta>_{\p\Omega}.\\
\end{split}
\end{equation}
Then, by Proposition \ref{prop-delta} and that $E(B,B)=\emptyset$, we get the first identity.

Moreover, by that
\begin{equation}
\begin{split}
\vv<d\alpha,\beta>_{\Omega}+\vv<d\alpha,\beta>_{\p\Omega}=\vv<d\alpha,\beta>_{G}=\vv<\alpha,\delta\beta>_{G}=\vv<\alpha,\delta\beta>_{\Omega}+\vv<\alpha,\delta\beta>_{\p\Omega},\\
\end{split}
\end{equation}
\begin{equation}
\vv<d\alpha,\beta>_{\Omega}=\vv<\alpha,\delta\beta>_{\Omega}+\vv<\alpha,\delta\beta>_{\p\Omega}-\vv<d\alpha,\beta>_{\p\Omega}.
\end{equation}
Note that
\begin{equation}
\begin{split}
&\vv<\alpha,\delta\beta>_{\p\Omega}\\
=&\frac{1}{(k-1)!}\sum_{v\in B,v_1,\cdots,v_{k-1}\in\Omega}\alpha(v,v_1,\cdots,v_{k-1})\delta\beta(v,v_1,\cdots,v_{k-1})w(v,v_1,\cdots,v_{k-1})\\
=&\frac{(-1)^k}{(k-1)!}\sum_{v\in B,v_1,\cdots,v_{k-1},v_k\in\Omega}\alpha(v,v_1,\cdots,v_{k-1})\beta(v,v_1,\cdots,v_k)w(v,v_1,\cdots,v_k)\\
\end{split}
\end{equation}
by Proposition \ref{prop-delta} and $E(B,B)=\emptyset$, and
\begin{equation}
\begin{split}
&\vv<d\alpha,\beta>_{\p\Omega}\\
=&\frac{1}{k!}\sum_{v\in B,v_1,v_2,\cdots,v_k\in\Omega}d\alpha(v,v_1,\cdots,v_k)\beta(v,v_1,\cdots,v_k)w(v,v_1,\cdots,v_k)\\
=&\frac{1}{k!}\sum_{v\in B,v_1,v_2,\cdots,v_k\in\Omega}\alpha(v_1,\cdots,v_k)\beta(v,v_1,\cdots,v_k)w(v,v_1,\cdots,v_k)+\\
&\frac{1}{k!}\sum_{j=1}^k\sum_{v\in B,v_1,v_2,\cdots,v_k\in\Omega}(-1)^j\alpha(v,v_1,\cdots,\hat v_j,\cdots,v_k)\beta(v,v_1,\cdots,v_k)w(v,v_1,\cdots,v_k)\\
=&\vv<\mathbf D\alpha,\beta>_{\p\Omega}+\vv<\alpha,\delta\beta>_{\p\Omega}.
\end{split}
\end{equation}
From these, we get the second identity.
\end{proof}

For the definition of DtN maps, we need to solve some boundary value problems. We are interested in the following three kinds of boundary value problems:
\begin{equation}\label{eq-deltad}
\left\{\begin{array}{ll}
\delta d\omega=0&\mbox{in $\Omega$}\\
\omega=\varphi&\mbox{in $C_{k+1}(\p\Omega)$},
\end{array}\right.
\end{equation}
\begin{equation}\label{eq-ddelta}
\left\{\begin{array}{ll}
d\delta \omega=0&\mbox{in $\Omega$}\\
\omega=\varphi&\mbox{in $C_{k+1}(\p\Omega)$},
\end{array}\right.
\end{equation}
and
\begin{equation}\label{eq-Hodge}
\left\{\begin{array}{ll}
\Delta \omega=0&\mbox{in $\Omega$}\\
\omega=\varphi&\mbox{in $C_{k+1}(\p\Omega)$},
\end{array}\right.
\end{equation}
where $\omega\in A^{k}(G)$ is the unknown $k$-form and $\varphi\in A^k(\p\Omega)$ is the given boundary $k$-form. We will show the solvability of the boundary value problems \eqref{eq-deltad}, \eqref{eq-ddelta} and \eqref{eq-Hodge} by a variation argument. We first need the following simple lemma in linear algebra.
\begin{lem}\label{lem-min}
Let $V$ be a finite dimensional vector space over $\R$ and $Q$ be a nonnegative quadratic form on $V$. Then, for any $x_0\in V$ and any subspace $W\subset V$, the function $f(x)=Q(x_0+x,x_0+x)$ with $x\in W$ achieves its minimum in $W$.
\end{lem}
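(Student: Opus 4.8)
The plan is to reduce the statement to the standard fact that a continuous real-valued function attains its minimum on a compact set, after first quotienting out the directions in which $Q$ is degenerate. Observe that the map $x\mapsto f(x)=Q(x_0+x,x_0+x)$ is a polynomial in the coordinates of $x$, hence continuous; the only issue is that $W$ is not compact, so I cannot apply extreme value theorem directly to all of $W$. The key structural input is that $Q$ is \emph{nonnegative}, so along the null space $N=\{x\in V:\ Q(x,y)=0\ \text{for all}\ y\in V\}$ the function $f$ is constant on every coset $x_0+x+N$; this lets me work on a complement.

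First I would introduce the bilinear form $B(x,y)$ associated to $Q$ and let $N$ be its radical. Pick a subspace $W_0\subset W$ with $W=W_0\oplus(W\cap N)$. For $x=x_1+x_2$ with $x_1\in W_0$ and $x_2\in W\cap N$, bilinearity and $x_2\in N$ give $f(x)=Q(x_0+x_1+x_2,x_0+x_1+x_2)=Q(x_0+x_1,x_0+x_1)+2B(x_0+x_1,x_2)+Q(x_2,x_2)=Q(x_0+x_1,x_0+x_1)$, since $B(x_0+x_1,x_2)=0$ and $Q(x_2,x_2)=0$. Hence it suffices to minimize $g(x_1):=Q(x_0+x_1,x_0+x_1)$ over $x_1\in W_0$.

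Now $Q$ restricted to $W_0$ is positive definite: if $x_1\in W_0$ and $Q(x_1,x_1)=0$, then by the Cauchy--Schwarz inequality for the nonnegative form $Q$ we get $B(x_1,y)=0$ for all $y\in V$, so $x_1\in N\cap W_0=\{0\}$. Therefore $\|x_1\|:=\sqrt{Q(x_1,x_1)}$ is a genuine norm on $W_0$. Write $x_0=p+q$ according to some decomposition, or more simply: for $\|x_1\|$ large, $g(x_1)=\|x_0'+x_1\|^2$ (where $x_0'$ is the $B$-orthogonal projection issue is avoided by just using the triangle inequality) satisfies $g(x_1)\geq(\|x_1\|-C)^2$ for a constant $C$ depending on $x_0$, using $|Q(x_0,x_0)|$, $|B(x_0,x_1)|\le \|x_0\|'\|x_1\|$ with a suitable seminorm bound on $x_0$. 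Thus $g(x_1)\to+\infty$ as $\|x_1\|\to\infty$, so the sublevel set $\{x_1\in W_0:\ g(x_1)\le g(0)\}$ is closed and bounded, hence compact (as $W_0$ is finite dimensional), and $g$ being continuous attains its minimum there, which is its minimum over all of $W_0$. Pulling back, $f$ attains its minimum on $W$.

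\textbf{Main obstacle.} The only delicate point is the coercivity estimate for $g$ on $W_0$: one must handle the cross term $B(x_0,x_1)$ when $x_0$ itself need not lie in $W_0$ (or even in a $B$-complement of $N$). This is resolved cleanly by noting that $|B(x_0,x_1)|\le \sqrt{Q(x_0,x_0)}\sqrt{Q(x_1,x_1)}$ by Cauchy--Schwarz for the nonnegative form $Q$ (which holds even when $Q$ is degenerate), so $g(x_1)\ge \|x_1\|^2 - 2\sqrt{Q(x_0,x_0)}\,\|x_1\| + Q(x_0,x_0)=(\|x_1\|-\sqrt{Q(x_0,x_0)})^2\to\infty$. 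Everything else is routine finite-dimensional linear algebra.
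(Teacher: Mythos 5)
Your proof is correct. Both your argument and the paper's rest on the same key fact: for a nonnegative quadratic form, Cauchy--Schwarz forces the associated bilinear form to vanish against any null vector, so the degenerate directions of $Q$ inside $W$ (which are exactly $W\cap N$ in your notation) contribute nothing to $f$ and can be split off. Where you diverge is in how the minimum is then attained on the nondegenerate part. The paper diagonalizes $Q$ on a basis $\xi_1,\dots,\xi_r$ of $W$ and writes $f$ as $\sum_i\bigl(Q(\xi_i,\xi_i)\lambda_i^2+2Q(x_0,\xi_i)\lambda_i\bigr)+Q(x_0,x_0)$, a sum of decoupled one-variable quadratics each of which visibly attains its minimum (by completing the square when $Q(\xi_i,\xi_i)>0$, and being identically zero otherwise); this is purely algebraic and produces the minimizer explicitly. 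You instead prove positive definiteness of $Q$ on a complement $W_0$ of $W\cap N$, establish the coercivity bound $g(x_1)\ge\bigl(\|x_1\|-\sqrt{Q(x_0,x_0)}\bigr)^2$ via Cauchy--Schwarz, and invoke compactness of a sublevel set together with the extreme value theorem. Your route is slightly longer and non-constructive, but it is more robust: the coercivity-plus-compactness scheme would carry over, for instance, to minimization over a closed convex subset rather than a subspace, where no diagonalization argument is available. One small presentational point: the sentence in your second-to-last paragraph beginning ``Write $x_0=p+q$ according to some decomposition\dots'' is muddled as written; the clean estimate you give in the final paragraph is the one to keep, and it suffices.
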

\begin{proof}
Let $\xi_1,\cdots,\xi_r$ be a basis of $W$ such that
\begin{equation}
Q(\lambda_1\xi_1+\lambda_2\xi_2+\cdots+\lambda_r\xi_r,\lambda_1\xi_1+\lambda_2\xi_2+\cdots+\lambda_r\xi_r)=\sum_{i=1}^rQ(\xi_i,\xi_i)\lambda_i^2.
\end{equation}
Then, for any $x=\lambda_1\xi_1+\lambda_2\xi_2+\cdots+\lambda_r\xi_r$,
\begin{equation}
f(x)=\sum_{i=1}^r\left(Q(\xi_i,\xi_i)\lambda_i^2+2Q(x_0,\xi_i)\lambda_i\right)+Q(x_0,x_0).
\end{equation}
By that $Q$ is nonnegative, we have $Q(\xi_i,\xi_i)\geq 0$ and $Q(x_0,\xi_i)=0$ whenever $Q(\xi_i,\xi_i)=0$. This gives us the conclusion.
\end{proof}

We are now ready to show that the boundary value problems \eqref{eq-deltad}, \eqref{eq-ddelta} and \eqref{eq-Hodge} are all solvable.
\begin{thm}\label{thm-existence}
Let $(G,B,w)$ be a finite weighted graph with boundary and with $\Omega$ its interior. Then, for any given boundary data $\varphi\in A^k(\p\Omega)$, the boundary values problems \eqref{eq-deltad},\eqref{eq-ddelta} and \eqref{eq-Hodge} are all solvable.
\end{thm}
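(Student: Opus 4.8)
\textit{Proof proposal.} The plan is to handle the three boundary value problems \eqref{eq-deltad}, \eqref{eq-ddelta} and \eqref{eq-Hodge} uniformly by a Dirichlet-principle argument based on Lemma \ref{lem-min}. First I would record the structural fact that, since $E(B,B)=\emptyset$, every $(k+1)$-clique of $G$ is either an \emph{interior clique} (all of its vertices in $\Omega$) or a \emph{boundary clique} (exactly one vertex in $B$, i.e. an element of $C_{k+1}(\p\Omega)$). This has two consequences. On one hand, the given $\varphi\in A^k(\p\Omega)$ can be extended to some $\omega_0\in A^k(G)$ with $\omega_0=\varphi$ on $C_{k+1}(\p\Omega)$: simply set $\omega_0$ equal to $\varphi$ (skew-symmetrized, which is unambiguous because each boundary clique has a unique vertex in $B$) on boundary cliques and equal to $0$ on interior cliques. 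On the other hand, writing $W=\{\eta\in A^k(G):\eta=0\ \text{on}\ C_{k+1}(\p\Omega)\}$, a linear subspace of $A^k(G)$, every $k$-form on $G$ with boundary data $\varphi$ is of the form $\omega_0+\eta$ with $\eta\in W$.

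Next, on $A^k(G)$ I would introduce the three nonnegative quadratic forms $Q_{\delta d}(\alpha)=\vv<d\alpha,d\alpha>_G$, $Q_{d\delta}(\alpha)=\vv<\delta\alpha,\delta\alpha>_G$ and $Q_\Delta=Q_{\delta d}+Q_{d\delta}$, and apply Lemma \ref{lem-min} with $x_0=\omega_0$ and the subspace $W$ to conclude that $\eta\mapsto Q(\omega_0+\eta)$ attains a minimum on $W$, say at $\eta_\ast$; set $\omega=\omega_0+\eta_\ast$, which has the prescribed boundary data. Because $\omega+t\eta$ has boundary data $\varphi$ for every $\eta\in W$ and $t\in\R$, the scalar function $t\mapsto Q(\omega+t\eta)$ is minimized at $t=0$; differentiating gives, in the three cases respectively, $\vv<d\omega,d\eta>_G=0$, $\vv<\delta\omega,\delta\eta>_G=0$ and $\vv<d\omega,d\eta>_G+\vv<\delta\omega,\delta\eta>_G=0$ for all $\eta\in W$. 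Since $\delta$ is the adjoint of $d$, these are the same as $\vv<\delta d\omega,\eta>_G=0$, $\vv<d\delta\omega,\eta>_G=0$ and $\vv<\Delta\omega,\eta>_G=0$ for all $\eta\in W$.

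Finally I would upgrade these variational identities to the pointwise equations in $\Omega$. For each interior clique $\{u_0,\dots,u_k\}\subset\Omega$, let $\eta_0\in A^k(G)$ be the $k$-form that equals $\sgn(\sigma)$ on $(u_{\sigma(0)},\dots,u_{\sigma(k)})$ for $\sigma\in S_{k+1}$ and vanishes on every other ordered tuple; then $\eta_0\in W$ because it vanishes on all boundary cliques, and a short computation using skew-symmetry of forms and symmetry of $w$ shows $\vv<\psi,\eta_0>_G=\psi(u_0,\dots,u_k)w(u_0,\dots,u_k)$ for any $\psi\in A^k(G)$. Running $\{u_0,\dots,u_k\}$ over all interior cliques and using $w>0$, the identities of the previous paragraph force $\delta d\omega$ (resp. $d\delta\omega$, resp. $\Delta\omega$) to vanish on every interior clique, i.e. in $\Omega$; hence $\omega$ solves \eqref{eq-deltad} (resp. \eqref{eq-ddelta}, resp. \eqref{eq-Hodge}). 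The whole argument is routine; the only steps needing genuine care are the verification that the test forms $\eta_0$ are legitimate elements of $W$ and the pairing computation identifying the Euler--Lagrange condition on $W$ with the clique-wise equation on $\Omega$ --- this ``fundamental lemma of the calculus of variations'' on graphs is where I expect the bookkeeping (and the use of $E(B,B)=\emptyset$) to be concentrated.
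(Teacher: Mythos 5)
Your proposal is correct and follows essentially the same route as the paper: zero-extend $\varphi$, minimize the appropriate nonnegative quadratic form over the affine subspace $x_0+W$ via Lemma \ref{lem-min}, and convert the resulting Euler--Lagrange identity into the pointwise equation on the interior cliques. The only cosmetic differences are that you take $\vv<\delta\alpha,\delta\alpha>_G$ where the paper's Dirichlet principle uses $\vv<\delta\alpha,\delta\alpha>_\Omega$ for the second and third problems (either choice produces a valid, possibly different, solution), and you spell out the test-form ``fundamental lemma'' explicitly where the paper simply invokes its Green's formula \eqref{eq-delta}.
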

\begin{proof}
We only show the solvability of \eqref{eq-deltad}. The proofs for the other two boundary value problems are similar.

Consider the quadratic form $Q(\alpha,\alpha)=\vv<d\alpha,d\alpha>_{G}$ on $V=A^{k}(G)$. Let
\begin{equation}
W=\{\alpha\in A^{k}(G) |\ \alpha=0\ \mbox{on}\ C_{k+1}(\p\Omega)\}
\end{equation}
and $x_0\in A^{k}(G)$ be the zero-extension of $\varphi$. By Lemma \ref{lem-min}, there is a minimizer of $Q$  when restricting $Q$ on
\begin{equation}
x_0+W=\{\alpha\in A^k(G)\ |\ \alpha=\varphi\ \mbox{on}\ C_{k+1}(\p\Omega)\}.
\end{equation}
Let $\omega$ be such a minimizer. We claim that $\omega$ is a solution to the boundary value problem \eqref{eq-deltad}. Indeed, by that $\omega$ is a minimizer, we know that
\begin{equation}
\vv<d\omega,d\xi>_{G}=0
\end{equation}
for any $\xi\in W$. By \eqref{eq-delta},
\begin{equation}
\vv<\delta d\omega,\xi>_{\Omega}=0
\end{equation}
for any $\xi\in W$. So, $\delta d\omega=0$ in $\Omega$.
\end{proof}

Moreover, the Dirichlet principle holds for the boundary value problems \eqref{eq-deltad}, \eqref{eq-ddelta} and \eqref{eq-Hodge}.
\begin{prop}
\begin{enumerate}
\item The solutions of the boundary value problem \eqref{eq-deltad} are minimizers of the quadratic form $\vv<d\alpha,d\alpha>_{G}$ with $\alpha\in A^k(G)$ and $\alpha=\varphi$ on $C_{k+1}(\p\Omega)$.
\item The solutions of the boundary value problem \eqref{eq-ddelta} are minimizers of the quadratic form $\vv<\delta\alpha,\delta\alpha>_{\Omega}$ with $\alpha\in A^k(G)$ and $\alpha=\varphi$ on $C_{k+1}(\p\Omega)$.
\item The solutions of the boundary value problem \eqref{eq-Hodge} are minimizers of the quadratic form $\vv<d\alpha,d\alpha>_{G}+\vv<\delta\alpha,\delta\alpha>_{\Omega}$ with $\alpha\in A^k(G)$ and $\alpha=\varphi$ on $C_{k+1}(\p\Omega)$.
\end{enumerate}
\end{prop}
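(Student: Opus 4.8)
The plan is to prove all three parts at once by the classical \emph{completing-the-square} argument. Fix a solution $\omega$ of the relevant boundary value problem and let $\alpha\in A^k(G)$ be an arbitrary competitor with $\alpha=\varphi$ on $C_{k+1}(\p\Omega)$. Write $\xi=\alpha-\omega$, so that $\xi$ lies in the subspace $W=\{\eta\in A^k(G)\ :\ \eta=0\ \text{on}\ C_{k+1}(\p\Omega)\}$ already used in the proof of Theorem \ref{thm-existence}. The whole point is that any pairing of the form $\vv<\cdot,\xi>_{\p\Omega}$ vanishes, since every summand of such a pairing carries a value of $\xi$ on a boundary clique.

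For part (1), I would expand $\vv<d\alpha,d\alpha>_G=\vv<d\omega,d\omega>_G+2\vv<d\omega,d\xi>_G+\vv<d\xi,d\xi>_G$ and apply the Green's formula \eqref{eq-delta} to the $(k+1)$-form $d\omega$ and the $k$-form $\xi$, rewriting the cross term as $\vv<\delta d\omega,\xi>_\Omega+\vv<\bN d\omega,\xi>_{\p\Omega}$. The first summand is $0$ because $\omega$ solves \eqref{eq-deltad}, i.e.\ $\delta d\omega=0$ in $\Omega$; the second is $0$ because $\xi$ vanishes on $C_{k+1}(\p\Omega)$. Hence $\vv<d\alpha,d\alpha>_G=\vv<d\omega,d\omega>_G+\vv<d\xi,d\xi>_G\geq\vv<d\omega,d\omega>_G$, so $\omega$ is a minimizer. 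Part (2) is the same scheme applied to $\vv<\delta\alpha,\delta\alpha>_\Omega$, this time invoking \eqref{eq-d} for the $(k-1)$-form $\delta\omega$ and the $k$-form $\xi$ to turn the cross term into $\vv<d\delta\omega,\xi>_\Omega+\vv<\bd\delta\omega,\xi>_{\p\Omega}$, which vanishes since $d\delta\omega=0$ in $\Omega$ and $\xi=0$ on $C_{k+1}(\p\Omega)$. For part (3) I would simply add the two expansions: the cross term becomes $\vv<(\delta d+d\delta)\omega,\xi>_\Omega=\vv<\Delta\omega,\xi>_\Omega=0$, plus the two boundary pairings, which again vanish.

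I do not expect a real obstacle here; the only points needing care are the signs in the two Green's formulas (each has a minus sign in front of the boundary term, so moving it to the other side flips it) and the explicit verification that the boundary pairings $\vv<\bN d\omega,\xi>_{\p\Omega}$ and $\vv<\bd\delta\omega,\xi>_{\p\Omega}$ vanish. It is also worth recording that, combined with the argument in the proof of Theorem \ref{thm-existence} (which shows that \emph{any} minimizer satisfies the corresponding variational identity, hence solves the boundary value problem), this proposition identifies the solution set of each of \eqref{eq-deltad}, \eqref{eq-ddelta} and \eqref{eq-Hodge} with the set of minimizers of the associated energy.
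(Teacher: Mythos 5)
Your proof is correct and takes essentially the same route as the paper: the identical completing-the-square expansion in which the cross term $\vv<d\omega,d\xi>_G$ is killed by Green's formula \eqref{eq-delta} (resp.\ \eqref{eq-d} for part (2)) together with the equation $\delta d\omega=0$ in $\Omega$ and the vanishing of $\xi$ on $C_{k+1}(\p\Omega)$. The paper writes this out only for (1) and declares (2) and (3) similar, exactly as you fill them in.
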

\begin{proof}
We only need to show (1). The proofs of (2) and (3) are similar.

Let $\omega_\varphi$ be a solution of the boundary value problem \eqref{eq-deltad} and $\omega\in A^k(G)$ with $\omega=\varphi$ on $C_{k+1}(\p\Omega)$. By \eqref{eq-delta},
\begin{equation}
\vv<d\omega_\varphi,d(\omega-\omega_\varphi)>_{G}=0
\end{equation}
since $\omega-\omega_\varphi=0$ on $C_{k+1}(\p\Omega)$.
So,
\begin{equation}
\vv<d\omega,d\omega>_{G}=\vv<d\omega_\varphi,d\omega_\varphi>_{G}+\vv<d(\omega-\omega_\varphi),d(\omega-\omega_\varphi)>_{G}\geq\vv<d\omega_\varphi,d\omega_\varphi>_{G}.
\end{equation}
This completes the proof.
\end{proof}
We are now ready to introduce DtN maps for forms on graphs. For each $\varphi\in A^k(\p\Omega)$, let $E_{\delta d}^{(k)}(\varphi)$, $E_{d\delta}^{(k)}(\varphi)$ and $E^{(k)}(\varphi)$ be solutions of the boundary value problems \eqref{eq-deltad}, \eqref{eq-ddelta} and \eqref{eq-Hodge} respectively. We define three kinds of DtN maps for forms on graphs corresponding to the three kinds of boundary value problems:
\begin{equation}
T_{\delta d}^{(k)}(\varphi)=\mathbf{N}(dE_{\delta d}^{(k)}(\varphi)),
\end{equation}
\begin{equation}
T_{d\delta }^{(k)}(\varphi)=\mathbf{D}(\delta E_{d\delta }^{(k)}(\varphi))
\end{equation}
and
\begin{equation}
T^{(k)}(\varphi)=\mathbf{N}(dE^{(k)}(\varphi))+\mathbf{D}(\delta E^{(k)}(\varphi)).
\end{equation}
However, because the solutions of the boundary value problems \eqref{eq-deltad}, \eqref{eq-ddelta} and \eqref{eq-Hodge} may not be unique,  we should check that the definitions of the DtN maps $T_{\delta d}^{(k)}$, $T_{d\delta}^{(k)}$ and $T^{k}$ are independent of the choice of solutions of the boundary value problems \eqref{eq-deltad}, \eqref{eq-ddelta} and \eqref{eq-Hodge} respectively.
\begin{thm}\label{thm-well-define}The operators $T_{\delta d}^{(k)}$, $T_{d\delta}^{(k)}$ and $T^{(k)}$ are well defined.
\end{thm}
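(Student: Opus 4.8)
The plan is to reduce, in each of the three cases, to a uniqueness statement for Cauchy data. Fix $\varphi\in A^k(\p\Omega)$ and suppose $\omega_1,\omega_2\in A^k(G)$ both solve \eqref{eq-deltad} with this $\varphi$; set $\eta=\omega_1-\omega_2$. Then $\eta=0$ on $C_{k+1}(\p\Omega)$ and $\delta d\eta=0$ in $\Omega$ by linearity. Since $\mathbf N\circ d$ is linear, the two candidate values of $T_{\delta d}^{(k)}(\varphi)$, namely $\mathbf N(d\omega_1)$ and $\mathbf N(d\omega_2)$, differ by $\mathbf N(d\eta)$, so well-definedness of $T_{\delta d}^{(k)}$ is equivalent to showing $\mathbf N(d\eta)=0$ for every $\eta\in A^k(G)$ that vanishes on $C_{k+1}(\p\Omega)$ and satisfies $\delta d\eta=0$ in $\Omega$. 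In the same way, well-definedness of $T_{d\delta}^{(k)}$ reduces to $\mathbf D(\delta\eta)=0$ whenever $\eta=0$ on $C_{k+1}(\p\Omega)$ and $d\delta\eta=0$ in $\Omega$, and well-definedness of $T^{(k)}$ reduces to $\mathbf N(d\eta)+\mathbf D(\delta\eta)=0$ whenever $\eta=0$ on $C_{k+1}(\p\Omega)$ and $\Delta\eta=0$ in $\Omega$.

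For $T_{\delta d}^{(k)}$, I would apply the Green's formula \eqref{eq-delta} with $\alpha=d\eta\in A^{k+1}(G)$ and $\beta=\eta$. The term $\vv<\delta d\eta,\eta>_\Omega$ vanishes because $\delta d\eta=0$ on every $(k+1)$-clique contained in $\Omega$, and $\vv<\mathbf N(d\eta),\eta>_{\p\Omega}$ vanishes because $\eta=0$ on every boundary $(k+1)$-clique; hence $0=\vv<d\eta,d\eta>_{G}$, and positivity of $w$ forces $d\eta=0$, so $\mathbf N(d\eta)=0$. For $T_{d\delta}^{(k)}$, I would apply \eqref{eq-d} with $\alpha=\delta\eta\in A^{k-1}(G)$ and $\beta=\eta$: now $\vv<d\delta\eta,\eta>_\Omega=0$ since $d\delta\eta=0$ in $\Omega$, and $\vv<\mathbf D(\delta\eta),\eta>_{\p\Omega}=0$ since $\eta=0$ on $C_{k+1}(\p\Omega)$, so $\vv<\delta\eta,\delta\eta>_\Omega=0$ and $\delta\eta$ vanishes on every $k$-clique contained in $\Omega$. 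Since by \eqref{eq-def-D} the value $\mathbf D(\delta\eta)(v,v_1,\cdots,v_k)$ equals $\delta\eta(v_1,\cdots,v_k)$ with $v_1,\cdots,v_k\in\Omega$, this forces $\mathbf D(\delta\eta)=0$.

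For $T^{(k)}$, I would run both computations together: applying \eqref{eq-delta} and \eqref{eq-d} to $\eta$ exactly as above and adding yields $\vv<\Delta\eta,\eta>_\Omega=\vv<d\eta,d\eta>_{G}+\vv<\delta\eta,\delta\eta>_\Omega$. The left-hand side is $0$ since $\Delta\eta=0$ in $\Omega$, and both terms on the right are nonnegative, so each vanishes; then $d\eta=0$ and $\delta\eta=0$ on $k$-cliques in $\Omega$ as before, whence $\mathbf N(d\eta)=0$ and $\mathbf D(\delta\eta)=0$, and in particular their sum is $0$.

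I do not anticipate a genuine difficulty here: this is the standard energy/uniqueness-of-Cauchy-data argument, and the tools it uses---the Green's formulas \eqref{eq-delta}--\eqref{eq-d} and the existence Theorem \ref{thm-existence}---are already in hand. The only points needing care are bookkeeping: keeping track of which family of cliques each of $\vv<\cdot,\cdot>_\Omega$, $\vv<\cdot,\cdot>_{\p\Omega}$, $\vv<\cdot,\cdot>_{G}$ sums over, so that ``$=0$ in $\Omega$'' and ``$\eta=0$ on $C_{k+1}(\p\Omega)$'' really do kill the intended terms, and noticing that the vanishing of $\delta\eta$ on interior $k$-cliques is exactly the hypothesis that \eqref{eq-def-D} needs in order to conclude $\mathbf D(\delta\eta)=0$.
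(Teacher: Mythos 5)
Your proposal is correct and follows essentially the same route as the paper: an energy argument via the Green's formulas showing that the difference $\eta$ of two solutions satisfies $\vv<d\eta,d\eta>_{G}=0$ (resp. $\vv<\delta\eta,\delta\eta>_{\Omega}=0$, resp. both), whence the relevant Neumann/Dirichlet data vanish. The only cosmetic difference is that the paper, after obtaining $d(\omega_1-\omega_2)=0$, applies the Green's formula a second time with an arbitrary test form to conclude $\mathbf{N}d(\omega_1-\omega_2)=0$, whereas you read this off directly from the pointwise formula \eqref{eq-def-N}; both are valid.
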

\begin{proof}
We only need to show the conclusion for $T_{\delta d}^{(k)}$. The proofs for the other two operators are similar.

For $\varphi\in A^k(G)$, let $\omega_1$ and $\omega_2$ be two solutions of the boundary value problem \eqref{eq-deltad}. Then, by \eqref{eq-delta}, we have
\begin{equation}
0=\vv<\delta d(\omega_1-\omega_2),\omega_1-\omega_2>_{\Omega}=\vv<d(\omega_1-\omega_2),d(\omega_1-\omega_2)>_{G}
\end{equation}
since $\omega_1-\omega_2=0$ on $C_{k+1}(\p\Omega)$. So
\begin{equation}
d(\omega_1-\omega_2)=0
\end{equation}
on $G$. Moreover, for any $\xi\in A^k(G)$, by \eqref{eq-delta} again,
\begin{equation}
0=\vv<\delta d(\omega_1-\omega_2),\xi>_{\Omega}=\vv<d(\omega_1-\omega_2),d\xi)>_{G}-\vv<\mathbf{N}d(\omega_1-\omega_2),\xi>_{\p\Omega}.
\end{equation}
So
\begin{equation}
\vv<\mathbf{N}d(\omega_1-\omega_2),\xi>_{\p\Omega}=0
\end{equation}
for any $\xi\in A^k(G)$. This means that
\begin{equation}
\mathbf{N}d\omega_1=\mathbf{N}d\omega_2.
\end{equation}
\end{proof}

Moreover, as usual, the DtN maps just defined are all nonnegative and self-adjoint operators.
\begin{prop}
The DtN maps $T_{\delta d}^{(k)}$, $T_{d\delta}^{(k)}$ and $T^{(k)}$ are all nonnegative self-adjoint operators on $A^{k}(\p\Omega)$. Moreover,
\begin{equation}\label{eq-ker-deltad}
\ker T_{\delta d}^{(k)}=\{\omega|_{C_{k+1}(\p\Omega)}\ :\ \omega\in A^{k}(G)\ \mbox{with}\ d\omega=0 \ \mbox{on}\ G.\},
\end{equation}
\begin{equation}\label{eq-ker-ddelta}
\ker T_{d\delta}^{(k)}=\{\omega|_{C_{k+1}(\p\Omega)}\ :\ \omega\in A^{k}(G)\ \mbox{with}\ \delta\omega=0 \ \mbox{on}\ \Omega.\},
\end{equation}
and
\begin{equation}\label{eq-ker}
\ker T^{(k)}=\{\omega|_{C_{k+1}(\p\Omega)}\ :\ \omega\in A^{k}(G)\ \mbox{with}\ d\omega=0 \ \mbox{on}\ G\ \mbox{and}\ \delta\omega=0 \ \mbox{on}\ \Omega.\}.
\end{equation}
\end{prop}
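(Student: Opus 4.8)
The plan is to verify the three asserted properties in sequence, treating nonnegativity and self-adjointness first as an immediate consequence of the variational identities already displayed in the introduction, and then computing the three kernels. For self-adjointness and nonnegativity of $T_{\delta d}^{(k)}$, I would use the Green's formula \eqref{eq-delta} with $\alpha=dE_{\delta d}^{(k)}(\varphi)$ and $\beta=E_{\delta d}^{(k)}(\psi)$: since $\delta d E_{\delta d}^{(k)}(\varphi)=0$ on $\Omega$ and $E_{\delta d}^{(k)}(\psi)=\psi$ on $C_{k+1}(\p\Omega)$, one gets
\begin{equation}
\vv<T_{\delta d}^{(k)}\varphi,\psi>_{\p\Omega}=\vv<\bN dE_{\delta d}^{(k)}(\varphi),\psi>_{\p\Omega}=\vv<dE_{\delta d}^{(k)}(\varphi),dE_{\delta d}^{(k)}(\psi)>_{G},
\end{equation}
which is visibly symmetric in $\varphi,\psi$ and nonnegative on the diagonal. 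The same argument with \eqref{eq-d} handles $T_{d\delta}^{(k)}$, using $\vv<T_{d\delta}^{(k)}\varphi,\psi>_{\p\Omega}=\vv<\delta E_{d\delta}^{(k)}(\varphi),\delta E_{d\delta}^{(k)}(\psi)>_{\Omega}$, and adding the two gives the statement for $T^{(k)}$. One should note that these quadratic-form expressions are independent of the choice of solution by Theorem \ref{thm-well-define}, so the computation is legitimate.

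Next I would prove \eqref{eq-ker-deltad}. For the inclusion $\supseteq$: if $\omega\in A^k(G)$ with $d\omega=0$ on all of $G$, then $\omega$ itself solves \eqref{eq-deltad} with boundary data $\varphi=\omega|_{C_{k+1}(\p\Omega)}$, and $T_{\delta d}^{(k)}\varphi=\bN d\omega=\bN 0=0$. For the inclusion $\subseteq$: if $T_{\delta d}^{(k)}\varphi=0$, take $\omega=E_{\delta d}^{(k)}(\varphi)$; then from the displayed identity $0=\vv<T_{\delta d}^{(k)}\varphi,\varphi>_{\p\Omega}=\vv<d\omega,d\omega>_{G}$, so $d\omega=0$ on $G$, and $\varphi=\omega|_{C_{k+1}(\p\Omega)}$ with $\omega$ closed, as required. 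The argument for \eqref{eq-ker-ddelta} is identical with $d\leftrightarrow\delta$, $G\leftrightarrow\Omega$, $\bN\leftrightarrow\bd$, using $\vv<T_{d\delta}^{(k)}\varphi,\varphi>_{\p\Omega}=\vv<\delta\omega,\delta\omega>_{\Omega}$.

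For \eqref{eq-ker} the inclusion $\supseteq$ is again direct: a form $\omega$ with $d\omega=0$ on $G$ and $\delta\omega=0$ on $\Omega$ solves \eqref{eq-Hodge}, and $T^{(k)}(\omega|_{C_{k+1}(\p\Omega)})=\bN d\omega+\bd\delta\omega$; here $d\omega=0$ kills the first term, and $\delta\omega=0$ on $\Omega$ makes $\bd\delta\omega$ vanish because, by \eqref{eq-def-D}, $\bd\delta\omega(v,v_1,\dots,v_k)=\delta\omega(v_1,\dots,v_k)$ and $v_1,\dots,v_k\in\Omega$. For $\subseteq$: if $T^{(k)}\varphi=0$, set $\omega=E^{(k)}(\varphi)$; then $0=\vv<T^{(k)}\varphi,\varphi>_{\p\Omega}=\vv<d\omega,d\omega>_{G}+\vv<\delta\omega,\delta\omega>_{\Omega}$, and since both summands are nonnegative they each vanish, giving $d\omega=0$ on $G$ and $\delta\omega=0$ on $\Omega$. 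The one point deserving care — and the likeliest place to slip — is the claim that $\delta\omega=0$ \emph{on $\Omega$} suffices to make $\bd\delta\omega=0$ as an element of $A^k(\p\Omega)$; this relies on the observation that $\bd$ only samples $\delta\omega$ on cliques entirely contained in $\Omega$, which in turn uses $E(B,B)=\emptyset$ so that a boundary $(k+1)$-clique $\{v,v_1,\dots,v_k\}$ has exactly one vertex $v$ in $B$ and the remaining $k$ vertices in $\Omega$. Everything else is a mechanical transcription of the two Green's formulas.
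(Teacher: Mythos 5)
Your proposal is correct and follows essentially the same route as the paper: the paper's entire proof consists of establishing the identity $\vv<T_{\delta d}^{(k)}(\varphi),\psi>_{\p\Omega}=\vv<d\omega_{\varphi},d\omega_{\psi}>_{G}$ via the Green's formula \eqref{eq-delta} and then asserting that the conclusion follows, with the other two operators handled "similarly." You simply fill in the details the paper leaves implicit (including the correct observation that $\bd$ only samples $\delta\omega$ on cliques contained in $\Omega$, which justifies the $\supseteq$ inclusion in \eqref{eq-ker}), so there is nothing substantive to flag.
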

\begin{proof}
We only need to show the conclusion for  $T_{\delta d}^{(k)}$. The proofs for the other two operators are similar. By \eqref{eq-delta},
\begin{equation}
\vv<T_{\delta d}^{(k)}(\varphi),\psi>_{\p\Omega}=\vv<d\omega_{\varphi},d\omega_{\psi}>_{G}
\end{equation}
for any  $\varphi,\psi\in A^k(\p\Omega)$ where $\omega_{\varphi}$ and $\omega_{\psi}$ mean solutions to the boundary value problem \eqref{eq-deltad} with boundary data $\varphi$ and $\psi$ respectively. From this we get the conclusion.
\end{proof}

It is clear that $T_{\delta d}^{(0)}=T^{(0)}$ is the same as the DtN maps for functions introduced in \cite{HHW} and \cite{HM} for edge -weighted graphs. Moreover, we have the following upper bounds for the norms of the DtN maps that are generalizations of the similar conclusion in \cite{HHW}.

\begin{prop}\label{prop-norm}
Let $(G,B,w)$ be a finite weighted graph with boundary and with $\Omega$ its interior. Then
\begin{equation*}
\|T_{\delta d}^{(k)}\|\leq(k+1)\max_{\{v,u_1,u_2,\cdots,u_k\}\in C_{k+1}(\p\Omega)}\frac{\sum_{u\in\Omega}w(u,v,u_1,\cdots,u_k)}{w(v,u_1,\cdots,u_k)},
\end{equation*}
\begin{equation*}
\|T_{d\delta}^{(k)}\|\leq\max_{\{u_1,u_2,\cdots,u_{k}\}\in C_{k}(\Omega)}\frac{\sum_{v\in B}w(v,u_1,\cdots,u_{k})}{w(u_1,u_2,\cdots,u_k)}
\end{equation*}
for $k\geq 1$, and
\begin{equation*}
\begin{split}
\|T^{(k)}\|\leq&\max_{\{u_1,u_2,\cdots,u_{k}\}\in C_{k}(\Omega)}\frac{\sum_{v\in B}w(v,u_1,\cdots,u_{k})}{w(u_1,u_2,\cdots,u_k)}\\
&+(k+1)\max_{\{v,u_1,u_2,\cdots,u_k\}\in C_{k+1}(\p\Omega)}\frac{\sum_{u\in\Omega}w(u,v,u_1,\cdots,u_k)}{w(v,u_1,\cdots,u_k)}
\end{split}
\end{equation*}
for $k\geq 1$.
\end{prop}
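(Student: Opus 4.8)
The strategy is to estimate each DtN map through its quadratic form. Recall from the previous proposition that
\[
\vv<T_{\delta d}^{(k)}\varphi,\varphi>_{\p\Omega}=\vv<dE_{\delta d}^{(k)}(\varphi),dE_{\delta d}^{(k)}(\varphi)>_{G},
\]
and similarly for the other two maps; moreover the operator norm of a nonnegative self-adjoint operator $T$ on $A^k(\p\Omega)$ equals $\sup_{\varphi\ne 0}\vv<T\varphi,\varphi>_{\p\Omega}/\vv<\varphi,\varphi>_{\p\Omega}$. So it suffices to bound the Dirichlet energies $\vv<dE_{\delta d}^{(k)}(\varphi),dE_{\delta d}^{(k)}(\varphi)>_{G}$ and $\vv<\delta E_{d\delta}^{(k)}(\varphi),\delta E_{d\delta}^{(k)}(\varphi)>_{\Omega}$ by $C\vv<\varphi,\varphi>_{\p\Omega}$ with the stated constants $C$, after which the third bound follows by adding the first two (using that $E^{(k)}$ minimizes the sum of the two energies, so its total energy is at most that of $E_{\delta d}^{(k)}$ plus that of $E_{d\delta}^{(k)}$ — or, more simply, bounding each energy term of $E^{(k)}$ by a test-function argument).

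For $T_{\delta d}^{(k)}$, the key observation is that $E_{\delta d}^{(k)}(\varphi)$ is an \emph{energy minimizer} (by the Dirichlet principle proposition) among all $\omega\in A^k(G)$ with $\omega=\varphi$ on $C_{k+1}(\p\Omega)$. Hence I may take as a competitor the zero-extension $\tilde\varphi$ of $\varphi$ (equal to $\varphi$ on boundary cliques, $0$ elsewhere), giving
\[
\vv<dE_{\delta d}^{(k)}(\varphi),dE_{\delta d}^{(k)}(\varphi)>_{G}\le\vv<d\tilde\varphi,d\tilde\varphi>_{G}.
\]
Now I expand $\vv<d\tilde\varphi,d\tilde\varphi>_{G}$ using the formula for $d$ and the definition of the inner product on $A^{k+1}(G)$: a $(k+2)$-clique contributes only if it is a boundary clique (otherwise $\tilde\varphi$ vanishes on all its $(k+1)$-faces), and for such a clique $\{v,u_1,\dots,u_{k+1}\}$ with $v\in B$ the only nonzero term in $d\tilde\varphi$ is $\pm\tilde\varphi(v,u_1,\dots,\hat u_j,\dots,u_{k+1})$, since all faces not containing $v$ are interior cliques and all other faces containing $v$ also fail to be boundary cliques — wait, they are boundary cliques. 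So $d\tilde\varphi(v,u_1,\dots,u_{k+1})=\sum_{j=1}^{k+1}(-1)^{j}\tilde\varphi(v,u_1,\dots,\hat u_j,\dots,u_{k+1})$ (the $j=0$ term drops because $\{u_1,\dots,u_{k+1}\}\subset\Omega$). Applying Cauchy–Schwarz to this sum of $k+1$ terms and carefully accounting for the combinatorial factors $\tfrac1{(k+1)!}$ versus $\tfrac1{k!}$ and the number of cliques each boundary $(k+1)$-clique sits inside, I collect everything and bound $w(v,u_1,\dots,u_{k+1})\le\big(\max\frac{\sum_{u\in\Omega}w(u,v,u_1,\dots,u_k)}{w(v,u_1,\dots,u_k)}\big)\,w(\dots)$ to land on the claimed $(k+1)\max(\cdots)$. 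This bookkeeping with symmetry factors is the main technical obstacle.

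For $T_{d\delta}^{(k)}$, I again use the Dirichlet principle: $E_{d\delta}^{(k)}(\varphi)$ minimizes $\vv<\delta\alpha,\delta\alpha>_\Omega$ over $\alpha$ with $\alpha=\varphi$ on $C_{k+1}(\p\Omega)$, so I test against the zero-extension $\tilde\varphi$ and bound $\vv<\delta\tilde\varphi,\delta\tilde\varphi>_\Omega$. Using Proposition \ref{prop-delta}, for an interior $k$-clique $\{u_1,\dots,u_k\}\subset\Omega$ one has $\delta\tilde\varphi(u_1,\dots,u_k)=\frac1{w(u_1,\dots,u_k)}\sum_{v}\tilde\varphi(v,u_1,\dots,u_k)w(v,u_1,\dots,u_k)$, and only $v\in B$ contribute (since $\tilde\varphi$ is supported on boundary cliques); note there is at most one such $v$ for each interior clique because $E(B,B)=\emptyset$ forces the $v$'s adjacent to all of $u_1,\dots,u_k$ to be non-adjacent among themselves, but in fact several boundary vertices may attach to the same interior $k$-clique, so I keep the sum and apply Cauchy–Schwarz over $v\in B$. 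Squaring, multiplying by $w(u_1,\dots,u_k)$, summing over interior $k$-cliques, and comparing with $\vv<\varphi,\varphi>_{\p\Omega}=\frac1{k!}\sum_{v\in B,\,u_i\in\Omega}\varphi(v,u_1,\dots,u_k)^2w(v,u_1,\dots,u_k)$ yields the factor $\max_{\{u_1,\dots,u_k\}\in C_k(\Omega)}\frac{\sum_{v\in B}w(v,u_1,\dots,u_k)}{w(u_1,\dots,u_k)}$. Finally, for $T^{(k)}$, since $\vv<T^{(k)}\varphi,\varphi>_{\p\Omega}=\vv<\delta E^{(k)}(\varphi),\delta E^{(k)}(\varphi)>_\Omega+\vv<dE^{(k)}(\varphi),dE^{(k)}(\varphi)>_G$ and $E^{(k)}(\varphi)$ minimizes the sum, testing against $\tilde\varphi$ bounds this sum by $\vv<d\tilde\varphi,d\tilde\varphi>_G+\vv<\delta\tilde\varphi,\delta\tilde\varphi>_\Omega$, which is exactly the sum of the two bounds already obtained; hence the third inequality. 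The only real work is the symmetrization/Cauchy–Schwarz accounting in the first two cases; the rest is assembling.
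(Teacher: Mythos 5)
Your proposal is correct and follows essentially the same route as the paper: the energy identity for the quadratic form, the Dirichlet principle with the zero-extension as competitor, and Cauchy--Schwarz with the symmetry bookkeeping that produces the factor $(k+1)$ and the max of the weight ratios. The only (harmless) deviation is that you estimate the Rayleigh quotient $\vv<T\varphi,\varphi>_{\p\Omega}/\vv<\varphi,\varphi>_{\p\Omega}$ directly, whereas the paper bounds $\|T\varphi\|_{\p\Omega}$ itself via separate pointwise estimates for $\bN$ and $\bd$; your version in fact delivers the additive constant in the $T^{(k)}$ bound slightly more cleanly.
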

\begin{proof}
Note that for each $\varphi\in A^{k+1}(\p\Omega)$
\begin{equation*}
\begin{split}
&\vv<\bN\varphi,\bN\varphi>_{\p\Omega}\\
=&\frac{1}{k!}\sum_{v\in B}\sum_{u_1,u_2,\cdots,u_k\in \Omega}(\bN\varphi(v,u_1,\cdots,u_k))^2w(v,u_1,\cdots,u_k)\\
=&\frac{1}{k!}\sum_{v\in B}\frac{1}{w(v,u_1,u_2,\cdots,u_k)}\sum_{u_1,u_2,\cdots,u_k\in\Omega}\left(\sum_{u\in\Omega}\varphi(u,v,u_1,\cdots,u_k)w(u,v,u_1,\cdots,u_k)\right)^2\\
\leq&\frac{1}{k!}\sum_{v\in B}\sum_{u_1,\cdots,u_k\in\Omega}\Bigg(\left(\sum_{u\in\Omega}\varphi(v,u,u_1,\cdots,u_k)^2w(v,u,u_1,\cdots,u_k)\right)\\
&\times\left(\sum_{u\in\Omega}\frac{w(u,v,u_1,\cdots,u_k)}{w(v,u_1,\cdots,u_k)}\right)\Bigg)\\
\leq&(k+1)\max_{\{v,u_1,u_2,\cdots,u_k\}\in C_{k+1}(\p\Omega)}\frac{\sum_{u\in\Omega}w(u,v,u_1,\cdots,u_k)}{w(v,u_1,\cdots,u_k)}\vv<\varphi,\varphi>_{\p\Omega}.
\end{split}
\end{equation*}
So, for each $\varphi\in A^{k}(\p\Omega)$,
\begin{equation*}
\begin{split}
&\vv<T_{\delta d}^{(k)}(\varphi),T_{\delta d}^{(k)}(\varphi)>_{\p\Omega}\\
=&\vv<\bN dE_{\delta d}^{(k)}(\varphi),\bN dE_{\delta d}^{(k)}(\varphi)>_{\p\Omega}\\
\leq& (k+1)\max_{\{v,u_1,u_2,\cdots,u_k\}\in C_{k+1}(\p\Omega)}\frac{\sum_{u\in\Omega}w(u,v,u_1,\cdots,u_k)}{w(v,u_1,\cdots,u_k)}\vv<dE_{\delta d}^{(k)}(\varphi),dE_{\delta d}^{(k)}(\varphi)>_{\p\Omega}\\
\leq&(k+1)\max_{\{v,u_1,u_2,\cdots,u_k\}\in C_{k+1}(\p\Omega)}\frac{\sum_{u\in\Omega}w(u,v,u_1,\cdots,u_k)}{w(v,u_1,\cdots,u_k)}\vv<dE_{\delta d}^{(k)}(\varphi),dE_{\delta d}^{(k)}(\varphi)>_{G}\\
\leq&(k+1)\max_{\{v,u_1,u_2,\cdots,u_k\}\in C_{k+1}(\p\Omega)}\frac{\sum_{u\in\Omega}w(u,v,u_1,\cdots,u_k)}{w(v,u_1,\cdots,u_k)}\vv<d\ol\varphi,d\ol\varphi>_{G}\\
=&(k+1)\max_{\{v,u_1,u_2,\cdots,u_k\}\in C_{k+1}(\p\Omega)}\frac{\sum_{u\in\Omega}w(u,v,u_1,\cdots,u_k)}{w(v,u_1,\cdots,u_k)}\vv<d\ol\varphi,d\ol\varphi>_{\p\Omega}\\
\leq&(k+1)^2\left(\max_{\{v,u_1,u_2,\cdots,u_k\}\in C_{k+1}(\p\Omega)}\frac{\sum_{u\in\Omega}w(u,v,u_1,\cdots,u_k)}{w(v,u_1,\cdots,u_k)}\right)^2\vv<\varphi,\varphi>_{\p\Omega}
\end{split}
\end{equation*}
where $\ol\varphi$ means the trivial extension of $\varphi$ and we have used the Dirichlet principle for the boundary value problem \eqref{eq-deltad} and the Cauchy-Schwartz inequality. This give us the first inequality.

Moreover, for any $\varphi\in A^k(\Omega)$,
\begin{equation*}
\begin{split}
&\vv<\bd\varphi,\bd\varphi>_{\p\Omega}\\
=&\frac{1}{(k+1)!}\sum_{v\in B}\sum_{u_1,u_2,\cdots,u_{k+1}\in\Omega}(\bd\varphi(v,u_1,\cdots,u_{k+1}))^2w(v,u_1,\cdots,u_{k+1})\\
=&\frac{1}{(k+1)!}\sum_{u_1,u_2,\cdots,u_{k+1}\in\Omega}\varphi(u_1,\cdots,u_{k+1})^2\sum_{v\in B}w(v,u_1,\cdots,u_{k+1})\\
\leq&\max_{\{u_1,u_2,\cdots,u_{k+1}\}\in C_{k+1}(\Omega)}\frac{\sum_{v\in B}w(v,u_1,\cdots,u_{k+1})}{w(u_1,u_2,\cdots,u_{k+1})}\vv<\varphi,\varphi>_{\Omega}.
\end{split}
\end{equation*}
So, for any $\varphi\in A^k(\p\Omega)$,
\begin{equation*}
\begin{split}
&\vv<T_{d\delta}^{(k)}(\varphi),T_{d\delta}^{(k)}(\varphi)>_{\p\Omega}\\
=&\vv<\bd\delta E_{d\delta}^{(k)}(\varphi),\bd\delta E_{d\delta}^{(k)}(\varphi)>_{\p\Omega}\\
\leq&\max_{\{u_1,u_2,\cdots,u_{k}\}\in C_{k}(\Omega)}\frac{\sum_{v\in B}w(v,u_1,\cdots,u_{k})}{w(u_1,u_2,\cdots,u_k)}\vv<\delta E_{d\delta}^{(k)}(\varphi),\delta E_{d\delta}^{(k)}(\varphi)>_{\Omega}\\
\leq&\max_{\{u_1,u_2,\cdots,u_{k}\}\in C_{k}(\Omega)}\frac{\sum_{v\in B}w(v,u_1,\cdots,u_{k})}{w(u_1,u_2,\cdots,u_k)}\vv<\delta\ol\varphi,\delta \ol\varphi>_{\Omega}\\
\leq&\left(\max_{\{u_1,u_2,\cdots,u_{k}\}\in C_{k}(\Omega)}\frac{\sum_{v\in B}w(v,u_1,\cdots,u_{k})}{w(u_1,u_2,\cdots,u_k)}\right)^2\vv<\varphi,\varphi>_{\p\Omega}
\end{split}
\end{equation*}
where we have used the Dirichlet principle for the boundary value problem \eqref{eq-ddelta} and the Cauchy-Schwartz inequality. This completes the proof of the second inequality.

A combination of the proofs of the first two inequalities will give us the last inequality.
\end{proof}
In the rest of the paper, for a nonnegative self-adjoint liner operator $T:V\to V$ defined on a finite dimensional vector space $V$ with inner product. We will denote by $\lambda_i(T)$ the $i$-th positive eigenvalue of $T$ (in ascending order and counting multiplicities).

As a direct corollary of Proposition \ref{prop-norm}, we have the following upper bounds of the eigenvalues of $T_{\delta d}^{(k)}$, $T_{d\delta}^{(k)}$ and $T^{(k)}$.
\begin{cor}\label{cor-upper}
Let $(G,B,w)$ be a finite weighted graph with boundary and with $\Omega$ its interior. Then,
\begin{equation*}
\lambda_i(T_{\delta d}^{(k)})\leq (k+1)\max_{\{v,u_1,u_2,\cdots,u_k\}\in C_{k+1}(\p\Omega)}\frac{\sum_{u\in\Omega}w(u,v,u_1,\cdots,u_k)}{w(v,u_1,\cdots,u_k)},
\end{equation*}
\begin{equation*}
\lambda_i(T_{d\delta}^{(k)})\leq\max_{\{u_1,u_2,\cdots,u_{k}\}\in C_{k}(\Omega)}\frac{\sum_{v\in B}w(v,u_1,\cdots,u_{k})}{w(u_1,u_2,\cdots,u_k)}
\end{equation*}
for $k\geq 1$,
and
\begin{equation*}
\begin{split}
\lambda_i(T^{(k)})\leq&\max_{\{u_1,u_2,\cdots,u_{k}\}\in C_{k}(\Omega)}\frac{\sum_{v\in B}w(v,u_1,\cdots,u_{k})}{w(u_1,u_2,\cdots,u_k)}\\
&+(k+1)\max_{\{v,u_1,u_2,\cdots,u_k\}\in C_{k+1}(\p\Omega)}\frac{\sum_{u\in\Omega}w(u,v,u_1,\cdots,u_k)}{w(v,u_1,\cdots,u_k)}
\end{split}
\end{equation*}
for $k\geq 1$.
\end{cor}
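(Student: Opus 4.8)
The plan is to deduce the three eigenvalue bounds directly from the operator-norm bounds established in Proposition \ref{prop-norm}, using only the elementary spectral fact that for a nonnegative self-adjoint operator $T$ on a finite dimensional inner product space one has $\lambda_i(T)\leq\|T\|$ for every index $i$.

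First I would recall that $T_{\delta d}^{(k)}$, $T_{d\delta}^{(k)}$ and $T^{(k)}$ are all nonnegative self-adjoint operators on $A^k(\p\Omega)$, as proved in the proposition preceding Proposition \ref{prop-norm}. Hence by the spectral theorem each of them is diagonalizable with nonnegative real eigenvalues, and the operator norm equals the largest eigenvalue. In particular every eigenvalue, and a fortiori every \emph{positive} eigenvalue $\lambda_i(T)$ (in the notation fixed just before the corollary), satisfies $\lambda_i(T)\leq\|T\|$. If $T=0$ there are no positive eigenvalues and the assertion is vacuous, so this causes no difficulty.

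It then remains only to substitute the bounds of Proposition \ref{prop-norm}: writing $\lambda_i(T_{\delta d}^{(k)})\leq\|T_{\delta d}^{(k)}\|$ and invoking the first inequality there gives the first claim; applying $\lambda_i(T_{d\delta}^{(k)})\leq\|T_{d\delta}^{(k)}\|$ together with the second inequality gives the second claim (valid for $k\geq 1$, matching the hypothesis); and combining with the third inequality of Proposition \ref{prop-norm} gives the bound for $\lambda_i(T^{(k)})$ for $k\geq 1$.

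There is no real obstacle in this argument — the statement is a genuine corollary, and the RHS is literally the same expression that already appears in Proposition \ref{prop-norm}. The only point requiring minor care is the bookkeeping around the indexing convention for $\lambda_i$ and the degree restriction $k\geq 1$ inherited from Proposition \ref{prop-norm}; neither affects the substance.
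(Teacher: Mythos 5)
Your proposal is correct and matches the paper's intended argument: the paper states this result as a direct corollary of Proposition \ref{prop-norm} with no further proof, the point being exactly that $\lambda_i(T)\leq\|T\|$ for a nonnegative self-adjoint operator $T$. Your only addition is spelling out this elementary spectral fact and the vacuous case $T=0$, which is fine.
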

\section{A Raulot-Savo-type estimate for subgraphs of integer lattices}
In this section, we derive a discrete version of Raulot-Savo-type estimates in \cite{RS1,SY,YY} for finite subgraphs of integer lattices. We first need the following discrete version of Lemma 2.1  in \cite{YY}.
\begin{prop}\label{prop-RS}
Let $(G,B,w)$ be a finite weighted graph with boundary and with $\Omega$ its interior. Let $V$ be a subspace of
\begin{equation}
\{ \xi\in A^{k+1}(G)\ :\ \xi=d\eta\ \mbox{for some}\  \eta\in A^{k}(G)\ \mbox{and}\ \delta\xi=0\ \mbox{on}\ \Omega.\}.
\end{equation}
Suppose $\dim V=n$. Let $A:V\to V$ be the linear transformation on $V$ such that
\begin{equation}\label{eq-A}
\vv<A\xi,\eta>_{G}=\vv<\bN\xi,\bN\eta>_{\p\Omega}
\end{equation}
for any $\xi,\eta\in V$. Then, $A$ is  positive and self-adjoint. Moreover
\begin{equation}\label{eq-deltad-estimate}
\lambda_i(T_{\delta d}^{(k)})\leq \lambda_i(A)
\end{equation}
and
\begin{equation}\label{eq-T-estimate}
\lambda_i(T^{(k)})\leq\lambda_i(A)
\end{equation}
for $i=1,2,\cdots,n$.
\end{prop}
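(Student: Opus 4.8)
The plan is to dispose of the positivity and self-adjointness of $A$ at once, and then to establish the two eigenvalue bounds \eqref{eq-deltad-estimate} and \eqref{eq-T-estimate} simultaneously: for each $i$ I would exhibit one $i$-dimensional test subspace of $A^k(\p\Omega)$ on which the Rayleigh quotients of both $T_{\delta d}^{(k)}$ and $T^{(k)}$ are at most $\lambda_i(A)$, and then conclude by the Courant--Fischer min-max principle. That $A$ is well defined is clear, since $\vv<\cdot,\cdot>_G$ restricts to an inner product on $V$, and self-adjointness is immediate from the symmetry of the right side of \eqref{eq-A} in $\xi$ and $\eta$. For positivity, if $\xi\in V$ satisfies $\vv<A\xi,\xi>_G=\vv<\bN\xi,\bN\xi>_{\p\Omega}=0$, then $\bN\xi=0$ on $\p\Omega$; feeding $\xi$ into the Green's formula \eqref{eq-delta} in place of $\alpha$ and using $\delta\xi=0$ on $\Omega$ gives $\vv<\xi,d\beta>_G=0$ for every $\beta\in A^k(G)$, and taking $\beta$ with $d\beta=\xi$ forces $\vv<\xi,\xi>_G=0$, hence $\xi=0$.

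For the estimates I would first fix an orthonormal basis $\xi_1,\dots,\xi_n$ of $V$ with respect to $\vv<\cdot,\cdot>_G$ made of eigenvectors of $A$, $A\xi_j=\mu_j\xi_j$ with $\mu_1\le\dots\le\mu_n$, so $\mu_j=\lambda_j(A)$. Since each $\xi_j$ is exact, it lies in $d(\delta A^{k+1}(G))$ by the Hodge decomposition theorem, so I may choose a co-exact form $\eta_j\in A^k(G)$ (that is, $\delta\eta_j=0$ on all of $G$) with $d\eta_j=\xi_j$; set $\varphi_j=\eta_j|_{C_{k+1}(\p\Omega)}\in A^k(\p\Omega)$. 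The key point is that for any scalars $c_1,\dots,c_n$, putting $\eta=\sum_jc_j\eta_j$, $\xi=d\eta=\sum_jc_j\xi_j\in V$ and $\varphi=\eta|_{C_{k+1}(\p\Omega)}=\sum_jc_j\varphi_j$, this single $\eta$ solves both \eqref{eq-deltad} and \eqref{eq-Hodge} with boundary data $\varphi$: indeed $\delta d\eta=\delta\xi=0$ on $\Omega$ because $\xi\in V$, while $d\delta\eta=0$ everywhere because $\delta\eta=0$, so $\Delta\eta=0$ on $\Omega$ as well. Hence $E_{\delta d}^{(k)}(\varphi)$ and $E^{(k)}(\varphi)$ may both be taken to be $\eta$, and since $\delta\eta=0$ the $\bd$-terms vanish, giving $T_{\delta d}^{(k)}(\varphi)=T^{(k)}(\varphi)=\bN d\eta=\bN\xi$. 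Then \eqref{eq-delta} (applied with $\alpha=d\eta$, $\beta=\eta$, using $\delta d\eta=0$ on $\Omega$), the orthonormality of the $\xi_j$, and the definition \eqref{eq-A} of $A$ yield
\[
\vv<T_{\delta d}^{(k)}\varphi,\varphi>_{\p\Omega}=\vv<T^{(k)}\varphi,\varphi>_{\p\Omega}=\vv<d\eta,d\eta>_G=\sum_{j}c_j^2,
\]
\[
\|T_{\delta d}^{(k)}\varphi\|_{\p\Omega}^2=\|T^{(k)}\varphi\|_{\p\Omega}^2=\vv<\bN\xi,\bN\xi>_{\p\Omega}=\vv<A\xi,\xi>_G=\sum_{j}\mu_jc_j^2.
\]

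To finish, fix $i$, let $W_i=\mbox{span}\{\varphi_1,\dots,\varphi_i\}$, write $T$ for either $T_{\delta d}^{(k)}$ or $T^{(k)}$, and let $P$ be the orthogonal projection of $A^k(\p\Omega)$ onto $(\ker T)^\perp$. Taking $c_{i+1}=\dots=c_n=0$ in the identities above shows $\vv<T\varphi,\varphi>_{\p\Omega}=\sum_{j\le i}c_j^2>0$ for every nonzero $\varphi\in W_i$, so the $\varphi_j$ are linearly independent, $W_i\cap\ker T=\{0\}$, and $P$ maps $W_i$ isomorphically onto an $i$-dimensional subspace of $(\ker T)^\perp$. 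For $0\ne\varphi\in W_i$, since $T\varphi\in(\ker T)^\perp$ one has $\vv<T\varphi,\varphi>_{\p\Omega}=\vv<TP\varphi,P\varphi>_{\p\Omega}$, and Cauchy--Schwarz gives $\|P\varphi\|_{\p\Omega}\ge\vv<T\varphi,\varphi>_{\p\Omega}/\|T\varphi\|_{\p\Omega}$, whence
\[
\frac{\vv<TP\varphi,P\varphi>_{\p\Omega}}{\|P\varphi\|_{\p\Omega}^2}\le\frac{\|T\varphi\|_{\p\Omega}^2}{\vv<T\varphi,\varphi>_{\p\Omega}}=\frac{\sum_{j\le i}\mu_jc_j^2}{\sum_{j\le i}c_j^2}\le\mu_i=\lambda_i(A).
\]
Since $P(W_i)$ is an $i$-dimensional subspace of $(\ker T)^\perp$, on which $T$ is positive definite, Courant--Fischer gives $\lambda_i(T)\le\lambda_i(A)$, which is \eqref{eq-deltad-estimate} for $T=T_{\delta d}^{(k)}$ and \eqref{eq-T-estimate} for $T=T^{(k)}$.

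The main obstacle is obtaining both estimates from one construction while coping with the nontrivial kernels of $T_{\delta d}^{(k)}$ and $T^{(k)}$. The first difficulty is resolved by choosing the extension $\eta$ co-exact, which makes it solve the $\delta d$-problem and the full Hodge problem at the same time and makes the $\bd$-contribution disappear; the second is resolved by projecting the test subspace onto $(\ker T)^\perp$ and using the Cauchy--Schwarz bound $\|P\varphi\|\ge\vv<T\varphi,\varphi>/\|T\varphi\|$ to convert the pair of identities $\vv<T\varphi,\varphi>=\sum c_j^2$, $\|T\varphi\|^2=\sum\mu_jc_j^2$ into the required bound on the Rayleigh quotient over $P(W_i)$. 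One should also verify the harmless technical points used above: that $\vv<T\varphi,\varphi>$ depends only on $\varphi$ modulo $\ker T$, so that $\vv<T\varphi,\varphi>=\vv<TP\varphi,P\varphi>$ (this follows from $T\varphi\in(\ker T)^\perp$), and that the variational identities for $T_{\delta d}^{(k)}$ and $T^{(k)}$ hold for any solution of the boundary value problems, which is the content of Theorem \ref{thm-well-define}.
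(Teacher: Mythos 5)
Your proof is correct, and while the overall strategy is the same as the paper's (test the min--max characterization of $\lambda_i(T)$ against the boundary restrictions of primitives of the eigenvectors of $A$, comparing via the quotient $\|T\varphi\|_{\p\Omega}^2/\vv<T\varphi,\varphi>_{\p\Omega}$), you resolve the two technical obstacles by genuinely different devices. First, you invoke the global Hodge decomposition on $G$ to choose co-exact primitives $\eta_j$ with $\delta\eta_j=0$ on all of $G$; this makes one extension $\eta=\sum_jc_j\eta_j$ solve \eqref{eq-deltad} and \eqref{eq-Hodge} simultaneously and annihilates the $\bd$-term, so both estimates fall out of a single computation. The paper instead treats the two operators sequentially: for $T_{\delta d}^{(k)}$ it only uses exactness of the $\xi_i$, and for $T^{(k)}$ it normalizes $\eta_i\perp B^k(G)$ and deduces $\delta\eta_i=0$ merely on $\Omega$, which suffices. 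Second, you handle the nontrivial kernels by projecting the test space $W_i$ onto $(\ker T)^{\perp}$ and converting the identities $\vv<T\varphi,\varphi>=\sum c_j^2$, $\|T\varphi\|^2=\sum\mu_jc_j^2$ into a Rayleigh-quotient bound on $P(W_i)$ via Cauchy--Schwarz, then applying Courant--Fischer; the paper instead adjusts the boundary data to lie in $(\ker T)^{\perp}$ from the outset (using the explicit descriptions \eqref{eq-ker-deltad} and \eqref{eq-ker}) and runs the dimension-counting intersection argument $U\cap W\neq 0$ against the eigenbasis of $T$. The two kernel arguments are equivalent in effect; yours is slightly more self-contained in that it does not need the explicit characterization of $\ker T$, while the paper's avoids introducing the projection. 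All the steps you flag as requiring verification (well-definedness of $T$ on any solution of the boundary value problem, $\vv<T\varphi,\varphi>=\vv<TP\varphi,P\varphi>$, linear independence of the $\varphi_j$) are correctly justified.
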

\begin{proof}
By definition of $A$, it is clear that $A$ is self-adjoint. For positivity of $A$, let $\xi\in V$ be such that $\bN\xi=0$. Suppose that $\xi=d\eta$. Then, by Green's formula \eqref{eq-delta},
\begin{equation}
\vv<\xi,\xi>_{G}=\vv<d\eta,\xi>_{G}=\vv<\eta,\delta\xi>_{\Omega}+\vv<\bN\xi,\eta>_{\p\Omega}=0.
\end{equation}
So, $\xi=0$ on $G$ and hence $A$ is positive.

Let $\xi_1,\xi_2,\cdots,\xi_n$ be an orthonormal frame of $V$ such that $A\xi_i=\lambda_i(A)\xi_i$. Suppose that $\xi_i=d\eta_i$. By \eqref{eq-ker-deltad}, without loss of generality,  we can assume that $\eta_i|_{C_{k+1}(\p\Omega)}\perp\ker T_{\delta d}^{(k)}$.

We first claim that $\eta_1|_{C_{k+1}(\p\Omega)},\eta_2|_{C_{k+1}(\p\Omega)},\cdots,\eta_n|_{C_{k+1}(\p\Omega)}$
are linear independent. Indeed, suppose that
\begin{equation}
c_1\eta_1|_{C_{k+1}(\p\Omega)}+c_2\eta_2|_{C_{k+1}(\p\Omega)}+\cdots+c_n\eta_n|_{C_{k+1}(\p\Omega)}=0.
\end{equation}
Let $\eta=c_1\eta_1+c_2\eta_2+\cdots+c_n\eta_n$. Then, by \eqref{eq-delta},
\begin{equation}
\vv<d\eta,d\eta>_{G}=\vv<\delta d\eta,\eta>_{\Omega}+\vv<\bN d\eta,\eta>_{\p\Omega}=0.
\end{equation}
So,
\begin{equation}
\sum_{i=1}^nc_i\xi_i=d\eta=0
\end{equation}
on $G$ which implies that $c_1=c_2=\cdots=c_n=0$.

Let $\varphi_1,\varphi_2,\cdots,\varphi_{N}\in (\ker T_{\delta d}^{(k)})^{\perp}$ be an orthonormal basis of $(\ker T_{\delta d}^{(k)})^{\perp}$ such that $T_{\delta d}^{(k)}(\varphi_i)=\lambda_i(T_{\delta d}^{(k)})\varphi_i$ for $i=1,2,\cdots,N$. Let $$U=\mbox{span}\{\eta_1|_{C_{k+1}(\p\Omega)},\eta_2|_{C_{k+1}(\p\Omega)},\cdots,\eta_i|_{C_{k+1}(\p\Omega)}\}\subset(\ker T_{\delta d}^{(k)})^{\perp}.$$
Then $\dim U=i$. Let $W=\mbox{span}\{\varphi_{i},\varphi_{i+1},\cdots,\varphi_{N}\}$. Then $\dim W=N-i+1$. So, $U\cap W\neq 0$. Let $\varphi\neq0$ be in $U\cap W$. Because $\varphi\in W$, suppose that $\varphi=\sum_{j=i}^N{c_j\varphi_j}$. Then,
\begin{equation}
\begin{split}
\frac{\vv<\bN dE_{\delta d}^{(k)}(\varphi),\bN dE_{\delta d}^{(k)}(\varphi)>_{\p\Omega}}{\vv<dE_{\delta d}^{(k)}(\varphi),dE_{\delta d}^{(k)}(\varphi)>_{G}}=&\frac{\vv<T_{\delta d}^{(k)}(\varphi),T_{\delta d}^{(k)}(\varphi)>_{\p\Omega}}{\vv<T_{\delta d}^{(k)}(\varphi),\varphi>_{\p\Omega}}\\
=&\frac{\sum_{j=i}^Nc_j^2\lambda_j(T_{\delta d}^{(k)})^2}{\sum_{j=i}^Nc_j^2\lambda_j(T_{\delta d}^{(k)})}\\
\geq&\lambda_i(T_{\delta d}^{(k)}).
\end{split}
\end{equation}
On the other hand, because $\varphi\in U$, suppose that $\varphi=\sum_{j=1}^i c_j\eta_j|_{C_{k+1}(\p\Omega)}$. Then,
\begin{equation}
\begin{split}
\frac{\vv<\bN dE_{\delta d}^{(k)}(\varphi),\bN dE_{\delta d}^{(k)}(\varphi)>_{\p\Omega}}{\vv<dE_{\delta d}^{(k)}(\varphi),dE_{\delta d}^{(k)}(\varphi)>_{G}}=&\frac{\vv<\bN d\sum_{j=1}^ic_j\eta_j,\bN d\sum_{j=1}^ic_j\eta_j>_{\p\Omega}}{\vv<d\sum_{j=1}^ic_j\eta_j,d\sum_{j=1}^ic_j\eta_j>_{G}}\\
=&\frac{\vv<\sum_{j=1}^ic_j\bN\xi_j,\sum_{j=1}^ic_j\bN\xi_j>_{\p\Omega}}{\vv<\sum_{j=1}^ic_j\xi_j,\sum_{j=1}^ic_j\xi_j>_G}\\
=&\frac{\sum_{j=1}^ic_j^2\lambda_j(A)}{\sum_{j=1}^ic_j^2}\\
\leq&\lambda_i(A).
\end{split}
\end{equation}
So, we get \eqref{eq-deltad-estimate}.

Furthermore, let $\xi_i$ be the same as before and suppose that $\xi_i=d\eta_i$. Without loss of generality, we assume that $\eta_i\perp B^k(G)$. Then, for any $\alpha\in A^{k-1}(G)$ with $\alpha|_{C_{k}(\p\Omega)}=0$, by \eqref{eq-delta},
\begin{equation}
\vv<\delta\eta_i,\alpha>_{\Omega}=\vv<\eta_i,d\alpha>_{G}-\vv<\mathbf N\eta_i,\alpha>_{\p\Omega}=0.
\end{equation}
So, $\delta\eta_i=0$ on $\Omega$. Let $\ol\eta_i|_{C_{k+1}(\p\Omega)}\in \ker T^{(k)}$ be the orthogonal projection of $\eta_i|_{C_{k+1}(\p\Omega)}$ into $\ker T^{(k)}$ and let $\zeta_i=\eta_i-\ol\eta_i$. Then, by \eqref{eq-ker}, $d\zeta_i=\xi_i$, $\delta \zeta_i=0$ and $\zeta_i\perp \ker T^{(k)}$. By the same argument as before,  $\zeta_1|_{C_{k+1}(\p\Omega)},\zeta_2|_{C_{k+1}(\p\Omega)},\cdots,\zeta_n|_{C_{k+1}(\p\Omega)}$ are linearly independent. From this, a similar argument as before will give us \eqref{eq-T-estimate}.
\end{proof}

We are now ready to prove the main result of this section.
\begin{thm}\label{thm-RS-0}Let $G$ be the graph with $V(G)=\mathbb{Z}^n$ and
\begin{equation}
E(G)=\left\{\{x,y\}\ :\ x,y\in \mathbb Z^n, \sum_{i=1}^n|x_i-y_i|=1\right\}.
\end{equation}
Let $\Omega$ be a nonempty finite subset of $\mathbb Z^n$. Consider the graph $\tilde \Omega$ as an edge-weighted graph with each edge of weight $1$. Then,
\begin{equation}
\lambda_1(T^{(0)})+\lambda_2(T^{(0)})+\cdots+\lambda_n(T^{(0)})\leq \sum_{v\in\delta\Omega}\frac{1}{\deg v}\sum_{i=1}^n\frac{\deg_iv}{|E_i(\tilde\Omega)|}
\end{equation}
where $\deg v$ means the number of edges in $\tilde \Omega$ adjacent to $v$, $\deg_iv$ means the number of edges in $\tilde\Omega$ adjacent to $v$ that are parallel to $e_i$, and $E_i(\tilde\Omega)$ means the set of edges in $\tilde \Omega$ that are parallel to $e_i$. Here $\{e_1,e_2,\cdots,e_n\}$ is the standard
basis of $\R^n$. As a consequence, we have
\begin{equation}
\lambda_1(T^{(0)})+\lambda_2(T^{(0)})+\cdots+\lambda_n(T^{(0)})\leq \frac{|\delta\Omega|}{\min_{1\leq i\leq n}\{|E_i(\tilde\Omega)|\}}.
\end{equation}
\end{thm}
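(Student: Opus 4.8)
The plan is to apply Proposition~\ref{prop-RS} with $k=0$ (so that $T^{(0)}=T^{(0)}_{\delta d}$) to the weighted graph with boundary $(\tilde\Omega,\delta\Omega)$, using as test space the span of the differentials of the coordinate functions. For $j=1,\dots,n$, let $x_j\colon\ol\Omega\to\R$ be the restriction to $V(\tilde\Omega)=\ol\Omega$ of the $j$-th coordinate on $\mathbb Z^n$, and put $V=\mathrm{span}\{dx_1,\dots,dx_n\}\subset A^1(\tilde\Omega)$. First I would check the hypotheses of Proposition~\ref{prop-RS}. Each $dx_j$ is exact. To see that $\delta(dx_j)=0$ on $\Omega$: every lattice neighbour of a vertex $v_0\in\Omega$ lies in $\ol\Omega$ and the joining edge lies in $E(\Omega,\ol\Omega)=E(\tilde\Omega)$, so inside $\tilde\Omega$ the vertex $v_0$ has exactly the $2n$ neighbours $v_0\pm e_i$, whence by Proposition~\ref{prop-delta}
\[
\delta(dx_j)(v_0)=\frac{1}{\deg v_0}\sum_{v\sim v_0}\bigl(x_j(v_0)-x_j(v)\bigr)=\frac{1}{2n}\sum_{i=1}^n\Bigl[\bigl(x_j(v_0)-x_j(v_0+e_i)\bigr)+\bigl(x_j(v_0)-x_j(v_0-e_i)\bigr)\Bigr]=0.
\]
Since $\Omega\neq\emptyset$, each direction $e_j$ is realized by an edge of $\tilde\Omega$, so the $dx_j$ are nonzero; they are moreover $\vv<\cdot,\cdot>_{\tilde\Omega}$-orthogonal (see below), so $\dim V=n$. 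Proposition~\ref{prop-RS} then gives $\lambda_i(T^{(0)})\le\lambda_i(A)$ for $i=1,\dots,n$, where $A\colon V\to V$ is the positive self-adjoint operator with $\vv<A\xi,\eta>_{\tilde\Omega}=\vv<\bN\xi,\bN\eta>_{\p\Omega}$; hence it suffices to estimate $\sum_{i=1}^n\lambda_i(A)=\tr A$.

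To compute $\tr A$ I would first record that if $e$ is an edge of $\tilde\Omega$ parallel to $e_m$, then $dx_j(e)=0$ for $j\neq m$ and $dx_m(e)=\pm1$ (the sign depending only on the chosen orientation of $e$); summing the product $dx_j(e)\,dx_l(e)$ over all edges gives $\vv<dx_j,dx_l>_{\tilde\Omega}=|E_j(\tilde\Omega)|$ when $j=l$ and $0$ otherwise, so $\xi_j:=|E_j(\tilde\Omega)|^{-1/2}\,dx_j$ is an orthonormal basis of $V$. Next, for $v\in\delta\Omega$ the neighbours of $v$ in $\tilde\Omega$ are exactly its neighbours lying in $\Omega$; letting $a_j(v),b_j(v)\in\{0,1\}$ indicate whether $v-e_j\in\Omega$ and whether $v+e_j\in\Omega$, formula~\eqref{eq-def-N} yields
\[
\bN(dx_j)(v)=\frac{1}{\deg v}\sum_{\substack{u\in\Omega\\u\sim v}}\bigl(x_j(v)-x_j(u)\bigr)=\frac{a_j(v)-b_j(v)}{\deg v},\qquad\text{and}\qquad a_j(v)+b_j(v)=\deg_j v.
\]

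Finally, using $\vv<\varphi,\psi>_{\p\Omega}=\sum_{v\in\delta\Omega}\varphi(v)\psi(v)\deg v$ for $0$-forms,
\[
\tr A=\sum_{j=1}^n\vv<\bN\xi_j,\bN\xi_j>_{\p\Omega}=\sum_{j=1}^n\frac{1}{|E_j(\tilde\Omega)|}\sum_{v\in\delta\Omega}\frac{\bigl(a_j(v)-b_j(v)\bigr)^2}{\deg v}\le\sum_{j=1}^n\frac{1}{|E_j(\tilde\Omega)|}\sum_{v\in\delta\Omega}\frac{\deg_j v}{\deg v},
\]
where the inequality uses $(a-b)^2\le a+b$ for $a,b\in\{0,1\}$. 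Reversing the order of summation on the right gives $\tr A\le\sum_{v\in\delta\Omega}\frac{1}{\deg v}\sum_{i=1}^n\frac{\deg_i v}{|E_i(\tilde\Omega)|}$, which together with $\sum_{i=1}^n\lambda_i(T^{(0)})\le\tr A$ is the first asserted inequality; the second then follows immediately from $\sum_{i=1}^n\deg_i v=\deg v$ and $|E_i(\tilde\Omega)|\ge\min_{1\le m\le n}|E_m(\tilde\Omega)|$. I expect the only delicate point to be the discrete bookkeeping: that interior vertices retain their full lattice degree $2n$ inside $\tilde\Omega$ (this is what makes $dx_j$ co-closed on $\Omega$) while boundary vertices connect only to $\Omega$, together with the observation that the elementary bound $(a-b)^2\le a+b$ is exactly what converts the square appearing in $\bN(dx_j)$ into the directional degree $\deg_j v$; everything else is routine.
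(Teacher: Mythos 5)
Your proposal is correct and follows essentially the same route as the paper's proof: the same test space $V=\mathrm{span}\{dx_1,\dots,dx_n\}$, the same verification that each $dx_j$ is exact and co-closed on $\Omega$, the same orthogonality computation $\vv<dx_j,dx_l>_{\tilde\Omega}=\delta_{jl}|E_j(\tilde\Omega)|$, and the same trace bound via Proposition~\ref{prop-RS}. Your explicit bookkeeping with $a_j(v),b_j(v)\in\{0,1\}$ and the bound $(a-b)^2\le a+b$ just makes transparent the inequality $\bigl(\sum_{u\sim v,\,u\in\Omega}(x_j(v)-x_j(u))\bigr)^2\le\deg_j v$ that the paper states without comment.
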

\begin{proof}
Let $x_1,x_2,\cdots,x_n$ be the coordinate functions of $\R^n$. Note that $dx_1,dx_2,\cdots,dx_n\in A^1(\tilde\Omega)$ are linearly independent. Moreover, for each $x_0\in\Omega$,
\begin{equation}
\delta dx_i(x_0)=\frac{1}{2n}\sum_{j=1}^n(2x_i(x_0)-x_i(x_0+e_j)-x_i(x_0-e_j))=0.
\end{equation}
Let $V=\mbox{span}\{dx_1,dx_2,\cdots,dx_n\}$ and let $A$ be the linear transformation defined in \eqref{eq-A}. Then, by Proposition \ref{prop-RS}, we know that
\begin{equation}
\sum_{i=1}^n\lambda_i(T^{(0)})\leq \tr A.
\end{equation}
Moreover, note that
\begin{equation}
\vv<dx_i,dx_j>_{\tilde\Omega}=\left\{\begin{array}{ll}0& i\neq j\\|E_i(\tilde\Omega)|& i=j
\end{array}\right.
\end{equation}
and
\begin{equation}
\begin{split}
\vv<\bN dx_i,\bN dx_i>_{\p\Omega}=\sum_{v\in\delta\Omega}\frac{1}{\deg v}\left(\sum_{u\in\Omega,u\sim v}(x_i(v)-x_i(u))\right)^2\leq\sum_{v\in\delta\Omega}\frac{\deg_iv}{\deg v}.
\end{split}
\end{equation}
So,
\begin{equation}
\tr A=\sum_{i=1}^n\frac{\vv<\bN dx_i,\bN dx_i>_{\p\Omega}}{\vv<dx_i,dx_i>_{\tilde\Omega}}\leq\sum_{v\in\delta\Omega}\frac{1}{\deg v}\sum_{i=1}^n\frac{\deg_iv}{|E_i(\tilde\Omega)|}.
\end{equation}
This completes the proof of Theorem \ref{thm-RS-0}
\end{proof}
\section{Raulot-Savo-type estimates for subgraphs of the standard tessellation of $\R^n$}
In this section, we want to make some applications of Proposition \ref{prop-RS} to higher order forms. Because integer lattices do not contain any triangle, we consider the graph $G$ of standard tessellation of $\R^n$ with $V(G)=\mathbb Z^n$ and
\begin{equation}
E(G)=\left\{\{x,y\}\ :\ x\neq y\in \mathbb Z^n, 0\leq \min_{1\leq i\leq n}(y_i-x_i)\leq \max_{1\leq i\leq n}(y_i-x_i)=1\right\}.
\end{equation}

For simplicity, we first consider estimates for eigenvalues of DtN maps for functions.
\begin{thm}\label{thm-zero}
Let $G$ be the standard tessellation of $\R^n$ and $\Omega$ be a nonempty finite subset of $\mathbb Z^n$. Consider $\tilde\Omega$ as a weighted graph with unit weight. Then
\begin{equation}
\begin{split}
\sum_{i=1}^n\lambda_i(T^{(0)})\leq2^{n-1}\sum_{i=1}^n\frac{1}{|E_i(\tilde\Omega)|}\sum_{k=1}^n\sum_{\tiny{\begin{array}{l}1\leq i_1<i_2<\cdots<i_k\leq n\\ i\in \{i_1,i_2,\cdots,i_k\}\end{array}}}|E_{i_1i_2\cdots i_k}(\p\Omega)|
\end{split}
\end{equation}
where  $E_{i_1i_2\cdots i_k}(\p\Omega)$ means the number of edges in $\p\Omega$ that are parallel to $e_{i_1}+e_{i_2}+\cdots+e_{i_k}$. As a consequence,
\begin{equation}
\sum_{i=1}^n\lambda_i(T^{(0)})\leq \frac{n2^{n-1}|\p\Omega|}{\min_{1\leq i\leq n}\{|E_i(\tilde\Omega)|\}}.
\end{equation}
\end{thm}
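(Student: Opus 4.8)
The plan is to mimic the proof of Theorem \ref{thm-RS-0}, applying Proposition \ref{prop-RS} with the test space $V=\mbox{span}\{dx_1,dx_2,\cdots,dx_n\}$ inside $A^1(\tilde\Omega)$. First I would check that this $V$ is admissible: the $dx_i$ are exact by construction, and the computation $\delta dx_i=0$ on $\Omega$ goes through exactly as before, the only difference being that the standard tessellation has more edges at each interior vertex; since $x_i$ is the restriction of an affine function and $\delta d$ at $x_0$ is a signed average of $x_i(x_0)-x_i(y)$ over neighbors $y$ with appropriate weights, the contributions from $+e_j$ and $-e_j$ type neighbors still cancel in pairs. Hence Proposition \ref{prop-RS} gives $\sum_{i=1}^n\lambda_i(T^{(0)})\leq\tr A$ where $\vv<A\xi,\eta>_{\tilde\Omega}=\vv<\bN\xi,\bN\eta>_{\p\Omega}$.

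Next I would compute $\tr A$. Unlike the integer-lattice case the $dx_i$ need not be orthogonal in $A^1(\tilde\Omega)$, so I cannot simply read $\tr A$ off the diagonal; instead I use $\tr A=\sum_i\vv<A\zeta_i,\zeta_i>$ for any orthonormal basis, or more simply bound $\tr A\le\sum_i\frac{\vv<\bN dx_i,\bN dx_i>_{\p\Omega}}{\vv<dx_i,dx_i>_{\tilde\Omega}}$ after noting that replacing the inner product on $V$ that defines $A$ by the one with the $dx_i$ as an orthogonal (not orthonormal) frame only increases the trace — more carefully, since $A$ is self-adjoint and positive for $\langle\cdot,\cdot\rangle_{\tilde\Omega}$, $\tr A=\sum_i\frac{\langle A dx_i,dx_i\rangle}{\langle dx_i,dx_i\rangle}$ holds whenever the $dx_i$ are orthogonal, which they may not be here; so I would instead diagonalize the pencil $(\langle\bN\cdot,\bN\cdot\rangle_{\p\Omega},\langle\cdot,\cdot\rangle_{\tilde\Omega})$ and use the min–max characterization, or just use that $\tr A\le\sum_i\lambda_i$ is bounded by the Rayleigh quotients of $dx_i$ when the latter are orthogonal, and verify orthogonality. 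In the tessellation, $\vv<dx_i,dx_j>_{\tilde\Omega}=\sum_{e}(dx_i)(e)(dx_j)(e)$; an edge parallel to $e_{k_1}+\cdots+e_{k_m}$ contributes $1$ to $\langle dx_i,dx_i\rangle$ for each $i\in\{k_1,\dots,k_m\}$ and contributes $1$ to $\langle dx_i,dx_j\rangle$ for distinct $i,j$ in that set, so the $dx_i$ are in general NOT orthogonal. I would handle this by using the bound $\tr A\le\tr(B^{-1}C)$ where $B=(\langle dx_i,dx_j\rangle)$, $C=(\langle\bN dx_i,\bN dx_j\rangle)$, and then estimating; but to keep matters in line with the stated clean bound I expect the paper actually estimates each diagonal Rayleigh quotient separately using $\vv<dx_i,dx_i>_{\tilde\Omega}=\sum_{m=1}^n\sum_{\substack{1\le i_1<\cdots<i_m\le n\\ i\in\{i_1,\dots,i_m\}}}|E_{i_1\cdots i_m}(\tilde\Omega)|\ge|E_i(\tilde\Omega)|$ and $\vv<\bN dx_i,\bN dx_i>_{\p\Omega}\le 2^{n-1}\sum_{m=1}^n\sum_{\substack{1\le i_1<\cdots<i_m\le n\\ i\in\{i_1,\dots,i_m\}}}|E_{i_1\cdots i_m}(\p\Omega)|$.

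The key estimate to establish is therefore $\vv<\bN dx_i,\bN dx_i>_{\p\Omega}\le 2^{n-1}\sum_{m}\sum_{i\in\{i_1,\dots,i_m\}}|E_{i_1\cdots i_m}(\p\Omega)|$. Unrolling the definition of $\bN$, $\vv<\bN dx_i,\bN dx_i>_{\p\Omega}=\sum_{v\in\delta\Omega}\frac{1}{\deg v}\Big(\sum_{u\in\Omega,\,u\sim v}(x_i(v)-x_i(u))\Big)^2$; I would bound $\big(\sum_{u}(x_i(v)-x_i(u))\big)^2\le(\#\{u\sim v\text{ in }\tilde\Omega\text{ with }x_i(u)\ne x_i(v)\})\cdot\sum_u (x_i(v)-x_i(u))^2$ by Cauchy–Schwarz, note $|x_i(v)-x_i(u)|\le1$ for tessellation edges so each squared term is $\le1$, and control $\deg v$ from below — in the standard tessellation of $\R^n$ each vertex has degree $3^n-1$, and the number of neighbors differing in the $i$-th coordinate is $2\cdot 3^{n-1}$, giving the factor that after summing over $v$ and reorganizing the edge count by direction type produces the $2^{n-1}$ and the sum over $m$ and $(i_1,\dots,i_m)$ containing $i$. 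The main obstacle I anticipate is precisely this combinatorial bookkeeping: carefully counting, for a fixed coordinate $i$ and a fixed edge direction $e_{i_1}+\cdots+e_{i_m}$, how many incident edges of each type a boundary vertex has and how the global sum $\sum_{v\in\delta\Omega}(\cdots)$ redistributes onto $\sum |E_{i_1\cdots i_m}(\p\Omega)|$, while keeping the constant sharp at $2^{n-1}$; this is routine but error-prone, and the paper indeed defers such details to the Appendix. The final "consequence" then follows by bounding $|E_i(\tilde\Omega)|\ge\min_j|E_j(\tilde\Omega)|$ in each denominator and observing $\sum_i\sum_m\sum_{i\in\{i_1,\dots,i_m\}}|E_{i_1\cdots i_m}(\p\Omega)|=\sum_m m\sum_{i_1<\cdots<i_m}|E_{i_1\cdots i_m}(\p\Omega)|\le n\sum_{\text{all directions}}|E_{\cdots}(\p\Omega)|=n|\p\Omega|$.
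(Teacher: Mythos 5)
Your overall strategy coincides with the paper's: apply Proposition \ref{prop-RS} to $V=\mbox{span}\{dx_1,\dots,dx_n\}$ and control $\tr A$ through the Gram matrices $B=(\vv<dx_i,dx_j>_{\tilde\Omega})$ and $C=(\vv<\bN dx_i,\bN dx_j>_{\p\Omega})$. However, the two steps you leave open are exactly where the content lies, and as sketched they do not close. First, having correctly observed that the $dx_i$ are not orthogonal in $A^1(\tilde\Omega)$, you cannot fall back on ``estimating each diagonal Rayleigh quotient separately'': for a non-orthogonal frame $\tr A$ is not $\sum_i\vv<Adx_i,dx_i>_{\tilde\Omega}/\vv<dx_i,dx_i>_{\tilde\Omega}$, nor is it bounded by that sum in general, and bounding only the diagonal entries $B_{ii}\geq|E_i(\tilde\Omega)|$ does not control $\tr(B^{-1}C)$. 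What is needed, and what the paper proves, is the full matrix inequality $B\geq D:=\mbox{diag}\{|E_1(\tilde\Omega)|,\dots,|E_n(\tilde\Omega)|\}$, which follows from the identity $\sum_{i,j}(B-D)_{ij}\xi_i\xi_j=\sum_{k\geq2}\sum_{i_1<\cdots<i_k}|E_{i_1\cdots i_k}(\tilde\Omega)|(\xi_{i_1}+\cdots+\xi_{i_k})^2\geq0$; combined with $C\geq0$ this yields $\tr A=\tr(B^{-1}C)\leq\tr(D^{-1}C)$, which is the stated bound. This sum-of-squares lemma is the key idea missing from your outline.

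Second, your evaluation of $\vv<\bN dx_i,\bN dx_i>_{\p\Omega}$ imports the normalized-weight formula from Theorem \ref{thm-RS-0}; here the weight is the unit weight, so there is no $1/\deg v$ factor and one simply has $\vv<\bN dx_i,\bN dx_i>_{\p\Omega}=\sum_{v\in\delta\Omega}\big(\sum_{u\in\Omega,\,u\sim v}(v_i-u_i)\big)^2$. Consequently your Cauchy--Schwarz-plus-degree-count argument (with $\deg v=3^n-1$, which is in any case the degree in $G$ rather than in $\tilde\Omega$) does not produce the factor $2^{n-1}$. The paper instead uses that each summand $v_i-u_i$ lies in $\{-1,0,1\}$ and that at most $2^{n-1}$ neighbors $u$ of $v$ in $\tilde\Omega$ have $v_i-u_i=1$ (and likewise for $-1$), since such $u$ must satisfy $u-v=\pm\sum_{j\in J}e_j$ with $i\in J\subset\{1,\dots,n\}$; hence $\big(\sum_u(v_i-u_i)\big)^2\leq2^{n-1}\sum_{k}\sum_{i\in\{i_1,\dots,i_k\}}\deg_{i_1\cdots i_k}v$, which sums over $v\in\delta\Omega$ to the claimed right-hand side. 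Both gaps are repairable, but as written the proposal does not establish the two decisive estimates.
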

\begin{proof}
Consider $V=\mbox{span}\{dx_1,dx_2,\cdots,dx_n\}$, it is not hard to see that $dx_1,dx_2,\cdots,dx_n$ are linearly independent and we still have
\begin{equation}
\delta dx_i(u)=0
\end{equation}
for any $u\in \Omega$. Moreover,
\begin{equation}
\vv<dx_i,dx_j>_{\tilde\Omega}=\sum_{k=1}^n\sum_{\tiny{\begin{array}{l}1\leq i_1<\cdots<i_k\leq n\\\{i,j\}\subset\{i_1,i_2,\cdots,i_k\}\end{array}}}|E_{i_1i_2\cdots i_k}(\tilde\Omega)|,
\end{equation}
and
\begin{equation}
\begin{split}
\vv<\mathbf Ndx_i,\mathbf Ndx_i>_{\p\Omega}=&\sum_{v\in\delta\Omega}\left(\sum_{u\in\Omega,u\sim v}(v_i-u_i)\right)^2\\
\leq&2^{n-1}\sum_{v\in\delta\Omega}\sum_{k=1}^n\sum_{\tiny{\begin{array}{l}1\leq i_1<i_2<\cdots<i_k\leq n\\ i\in \{i_1,i_2,\cdots,i_k\}\end{array}}}\deg_{i_1i_2\cdots i_k}v\\
=&2^{n-1}\sum_{k=1}^n\sum_{\tiny{\begin{array}{l}1\leq i_1<i_2<\cdots<i_k\leq n\\ i\in \{i_1,i_2,\cdots,i_k\}\end{array}}}|E_{i_1i_2\cdots i_k}(\p\Omega)|\\
\end{split}
\end{equation}
for $i=1,2,\cdots,n$, where $\deg_{i_1i_2\cdots i_k}v$ means the number of edges adjacent to $v$ that are parallel to $e_{i_1}+e_{i_2}+\cdots+e_{i_k}$. The factor $2^{n-1}$ in the last inequality comes from the fact that there are at most $2^{n-1}$ vertices $u$ adjacent to $v$ in $\tilde\Omega$ such that $v_i-u_i$ all equal to $1$ or all equal to $-1$.

Let $B=\left(\vv<dx_i,dx_j>_{\tilde\Omega}\right)_{i,j=1,2,\cdots,n}$ and $C=\left(\vv<\mathbf Ndx_i,\mathbf Ndx_j>_{\p\Omega}\right)_{i,j=1,2,\cdots,n}$. We claim that
\begin{equation}
B\geq D:=\mbox{diag}\{|E_1(\tilde\Omega)|,|E_2(\tilde\Omega)|,\cdots, |E_n(\tilde\Omega)|\}.
\end{equation}
In fact, for any $(\xi_1,\xi_2,\cdots,\xi_n)\in \R^n$.
\begin{equation}
\begin{split}
&\sum_{i,j=1}^n(B-D)_{ij}\xi_i\xi_j\\
=&\sum_{k=2}^{n}\sum_{i,j=1}^n\sum_{\tiny{\begin{array}{l}1\leq i_1<\cdots<i_k\leq n\\\{i,j\}\subset\{i_1,i_2,\cdots,i_k\}\end{array}}}|E_{i_1i_2\cdots i_k}(\tilde\Omega)|\xi_i\xi_j\\
=&\sum_{k=2}^{n}\sum_{1\leq i_1<i_2<\cdots\leq i_k\leq n}|E_{i_1i_2\cdots i_k}(\tilde\Omega)|(\xi_{i_1}+\xi_{i_2}+\cdots+\xi_{i_k})^2\\
\geq&0.
\end{split}
\end{equation}
Then, by Proposition \ref{prop-RS},
\begin{equation*}
\begin{split}
&\sum_{i=1}^n\lambda_i(T^{(0)})\\
\leq&\tr A\\
=&\tr(B^{-1}C)\\
\leq&\tr(D^{-1}C)\\
=&2^{n-1}\sum_{i=1}^n\frac{1}{|E_i(\tilde\Omega)|}\sum_{k=1}^n\sum_{\tiny{\begin{array}{l}1\leq i_1<i_2<\cdots<i_k\leq n\\ i\in \{i_1,i_2,\cdots,i_k\}\end{array}}}|E_{i_1i_2\cdots i_k}(\p\Omega)|\\
\leq&\frac{2^{n-1}}{\min_{1\leq i\leq n}\{|E_i(\tilde\Omega)|\}}\sum_{i=1}^n\sum_{k=1}^n\sum_{\tiny{\begin{array}{l}1\leq i_1<i_2<\cdots<i_k\leq n\\ i\in \{i_1,i_2,\cdots,i_k\}\end{array}}}|E_{i_1i_2\cdots i_k}(\p\Omega)|\\
=&\frac{2^{n-1}}{\min_{1\leq i\leq n}\{|E_i(\tilde\Omega)|\}}\sum_{k=1}^n\sum_{1\leq i_1<i_2<\cdots<i_k\leq n}k|E_{i_1i_2\cdots i_k}(\p\Omega)|\\
\leq& \frac{n2^{n-1}|\p\Omega|}{\min_{1\leq i\leq n}\{|E_i(\tilde\Omega)|\}}.
\end{split}
\end{equation*}
This completes the proof of the theorem.
\end{proof}

More generally, we have the following estimate.
\begin{thm}\label{thm-gen}
Let $G$ be the standard  tessellation of $\R^n$ and $\Omega$ be a nonempty finite subset of $\mathbb Z^n$ and $0\leq k\leq n-1$.  Consider $\tilde\Omega$ as a weighted graph with unit weight and suppose that $C_{i_1i_2\cdots i_{k+1}}(\tilde\Omega)\neq\emptyset$ for any $1\leq i_1< i_2<\cdots<i_{k+1}\leq n$. Here $C_{i_1i_2\cdots i_{k+1}}(\tilde\Omega)$ means the set of $(k+2)$-cliques in $\tilde\Omega$ such that the $(k+1)$-simplex formed by the $(k+2)$ vertices of the $(k+2)$-clique is parallel to the $(k+1)$-simplex formed by $o,e_{j_1},e_{j_1}+e_{j_2},\cdots,e_{j_1}+\cdots +e_{j_{k+1}}$ with $(j_1,j_2,\cdots,j_{k+1})$ some permutation of $(i_1,i_2,\cdots,i_{k+1})$. Then,
\begin{equation}
\sum_{i=1}^{C_n^{k+1}}\lambda_i(T_{\delta d}^{(k)})\leq (k+1)2^{n-k-1}\sum_{1\leq i_1<i_2<\cdots<i_{k+1}\leq n}\frac{|C_{k+2}(\p\Omega)|}{|C_{i_1i_2\cdots i_{k+1}}(\tilde\Omega)|},
\end{equation}
and
\begin{equation}
\sum_{i=1}^{C_n^{k+1}}\lambda_i(T^{(k)})\leq (k+1)2^{n-k-1}\sum_{1\leq i_1<i_2<\cdots<i_{k+1}\leq n}\frac{|C_{k+2}(\p\Omega)|}{|C_{i_1i_2\cdots i_{k+1}}(\tilde\Omega)|}.
\end{equation}
As a consequence, we have
\begin{equation}
\sum_{i=1}^{C_n^{k+1}}\lambda_i(T_{\delta d}^{(k)})\leq\frac{nC_{n-1}^{k}2^{n-k-1}|C_{k+2}(\p\Omega)|}{\min_{1\leq i_1<i_2<\cdots<i_{k+1}\leq n}|C_{i_1i_2\cdots i_{k+1}}(\tilde\Omega)|},
\end{equation}
and
\begin{equation}
\sum_{i=1}^{C_n^{k+1}}\lambda_i(T^{(k)})\leq\frac{nC_{n-1}^{k}2^{n-k-1}|C_{k+2}(\p\Omega)|}{\min_{1\leq i_1<i_2<\cdots<i_{k+1}\leq n}|C_{i_1i_2\cdots i_{k+1}}(\tilde\Omega)|}.
\end{equation}
\end{thm}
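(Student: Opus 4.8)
The plan is to follow the proof of Theorem~\ref{thm-zero}, replacing the test $1$-forms $dx_1,\dots,dx_n$ by the constant-coefficient $(k+1)$-forms
\[
\omega_I:=dx_{i_1}\wedge dx_{i_2}\wedge\cdots\wedge dx_{i_{k+1}},\qquad I=\{i_1<i_2<\cdots<i_{k+1}\}\subset\{1,2,\dots,n\},
\]
of which there are $C_n^{k+1}$. First I would record the relevant properties of $\omega_I$ on the standard tessellation: (i) $\omega_I=d\eta_I$ with $\eta_I=x_{i_1}\,dx_{i_2}\wedge\cdots\wedge dx_{i_{k+1}}$, by the Leibniz rule of Proposition~\ref{prop-exterior}(3) and $d(dx_{i_2}\wedge\cdots\wedge dx_{i_{k+1}})=0$; (ii) $\delta\omega_I=0$ on $\Omega$, a short telescoping computation that works because every $k$-clique contained in $\Omega$ has all of its $(k+2)$-clique extensions in $\tilde\Omega$ lying in $\ol\Omega$, so the defining sum for $\delta\omega_I$ agrees with the one in the full lattice $\mathbb Z^n$ and vanishes by translation invariance; and (iii) on any $(k+2)$-clique $\sigma$, written as a monotone staircase with $k+1$ increments of pairwise disjoint supports, $\omega_I(\sigma)$ is a fixed nonzero constant $c_k$ (depending only on $k$, from the normalizations in $\bA$) times the determinant of a $0$–$1$ matrix; this determinant is $\pm1$ exactly when each increment of $\sigma$ contains exactly one element of $I$ and is $0$ otherwise, so $\omega_I$ takes values in $\{0,\pm c_k\}$, vanishes unless the direction set of $\sigma$ contains $I$, and equals $\pm c_k$ on every clique of $C_I(\tilde\Omega)$ while $\omega_J$ vanishes there for $J\neq I$. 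Since $C_I(\tilde\Omega)\neq\emptyset$ for all $I$ by hypothesis, the $\omega_I$ are then linearly independent in $A^{k+1}(\tilde\Omega)$, so $V:=\mathrm{span}\{\omega_I\}$ has dimension $C_n^{k+1}$.

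By (i) and (ii), $V$ satisfies the hypotheses of Proposition~\ref{prop-RS}, which yields a positive self-adjoint operator $A$ on $V$ with $\vv<A\xi,\eta>_G=\vv<\bN\xi,\bN\eta>_{\p\Omega}$ and
\[
\sum_{i=1}^{C_n^{k+1}}\lambda_i(T_{\delta d}^{(k)})\le\tr A,\qquad \sum_{i=1}^{C_n^{k+1}}\lambda_i(T^{(k)})\le\tr A,
\]
so both desired estimates reduce to one bound on $\tr A$. In the basis $\{\omega_I\}$ one has $A=B^{-1}C$ with Gram matrices $B=(\vv<\omega_I,\omega_J>_{\tilde\Omega})$ and $C=(\vv<\bN\omega_I,\bN\omega_J>_{\p\Omega})$. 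Writing $B=\sum_{\sigma\in C_{k+2}(\tilde\Omega)}\mathbf v_\sigma\mathbf v_\sigma^{\mathsf T}$ with $(\mathbf v_\sigma)_I=\omega_I(\sigma)$, the cliques $\sigma$ whose direction set has size exactly $k+1$ are precisely those lying in some $C_I(\tilde\Omega)$, and each such $\sigma$ contributes $c_k^2$ to the single diagonal entry $I$, while the cliques with a multi-dimensional increment contribute a further positive semidefinite term. Hence $B\ge D:=\mathrm{diag}(c_k^2|C_I(\tilde\Omega)|)$, and since $C\ge0$ and $0<D\le B$ we get $\tr A=\tr(B^{-1}C)\le\tr(D^{-1}C)=\sum_I \vv<\bN\omega_I,\bN\omega_I>_{\p\Omega}\big/\bigl(c_k^2|C_I(\tilde\Omega)|\bigr)$.

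The remaining, most delicate step is the bound $\vv<\bN\omega_I,\bN\omega_I>_{\p\Omega}\le c_k^2(k+1)2^{n-k-1}|C_{k+2}(\p\Omega)|$, the analogue of $\vv<\bN dx_i,\bN dx_i>_{\p\Omega}\le2^{n-1}\sum_{I\ni i}|E_I(\p\Omega)|$ from Theorem~\ref{thm-zero}. With unit weight, $\vv<\bN\omega_I,\bN\omega_I>_{\p\Omega}=\sum_{\tau\in C_{k+1}(\p\Omega)}\bigl(\sum_{u\in\Omega}\omega_I(u,\tau)\bigr)^2$. For a fixed $\tau\in C_{k+1}(\p\Omega)$, a vertex $u\in\Omega$ with $\omega_I(u,\tau)\ne0$ extends $\tau$ to a $(k+2)$-clique either by prepending or appending a new increment disjoint from the direction set of $\tau$, or by subdividing the unique increment of $\tau$ carrying two elements of $I$; a sign analysis of the associated $0$–$1$ determinant shows that prepending versus appending give opposite signs, and the two ways of separating the two $I$-directions inside a subdivided increment give opposite signs, so the $u$ with $\omega_I(u,\tau)=+c_k$ number at most $2^{n-k-1}$, and likewise those with value $-c_k$. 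Therefore $\bigl(\sum_u\omega_I(u,\tau)\bigr)^2\le c_k^2\,2^{n-k-1}\,N_I(\tau)$, where $N_I(\tau)=\#\{u\in\Omega:\omega_I(u,\tau)\ne0\}$. Summing over $\tau$ and noting that each $\sigma\in C_{k+2}(\p\Omega)$ with $\omega_I(\sigma)\ne0$ contributes to $N_I(\tau)$ for precisely the $k+1$ boundary $(k+1)$-subcliques obtained by deleting one of its $k+1$ interior vertices, we get $\sum_\tau N_I(\tau)\le(k+1)|C_{k+2}(\p\Omega)|$, and the claimed bound follows. Substituting into the inequality of the previous paragraph bounds $\sum_{i=1}^{C_n^{k+1}}\lambda_i(T_{\delta d}^{(k)})$ and $\sum_{i=1}^{C_n^{k+1}}\lambda_i(T^{(k)})$ by $(k+1)2^{n-k-1}\sum_I|C_{k+2}(\p\Omega)|/|C_I(\tilde\Omega)|$; the ``as a consequence'' inequalities then follow from $\sum_I 1/|C_I(\tilde\Omega)|\le C_n^{k+1}/\min_I|C_I(\tilde\Omega)|$ and the identity $(k+1)C_n^{k+1}=nC_{n-1}^k$.

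The main obstacle is precisely this $\bN$-estimate: getting the constant to be $2^{n-k-1}$ rather than $2^{n-k}$ requires checking carefully that the prepend/append extensions and the two subdivision choices really do split into pairs of opposite orientation, which is an intricate piece of bookkeeping with determinants over $0$–$1$ matrices — presumably this, together with the verification of $\delta\omega_I=0$ on $\Omega$ and the exact value of $c_k$, is what the introduction defers to the Appendix. Everything else is a direct transcription of the argument for Theorem~\ref{thm-zero}.
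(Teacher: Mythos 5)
Your proposal follows essentially the same route as the paper: the test space $V=\mathrm{span}\{dx_{i_1}\wedge\cdots\wedge dx_{i_{k+1}}\}$, the reduction to $\tr A$ via Proposition \ref{prop-RS}, the Gram-matrix comparison $B\geq D$ with the diagonal entries $c_k^2|C_{i_1\cdots i_{k+1}}(\tilde\Omega)|$, and the key count that at most $2^{n-k-1}$ extensions of a boundary $(k+1)$-clique give each sign $\pm c_k$ (the content of Propositions \ref{prop-em} and \ref{prop-compute} in the Appendix), followed by the same final arithmetic. The only quibble is that $\delta\omega_I=0$ on $\Omega$ follows from the equal count of $+c_k$ and $-c_k$ extensions (a reflection/pairing symmetry, i.e.\ Proposition \ref{prop-compute}(2)) rather than from translation invariance per se, but you invoke exactly that pairing elsewhere, so the argument is sound.
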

\begin{proof}
Let
\begin{equation}
V=\mbox{span}\{dx_{i_1}\wedge dx_{i_2}\wedge\cdots\wedge dx_{i_{k+1}}:1\leq i_1<i_2<\cdots<i_{k+1}\leq n\}.
 \end{equation}
 Let
 \begin{equation}
 C^*_{k+2}(\tilde\Omega)=\cup_{1\leq i_1<i_2<\cdots<i_{k+1}\leq n}C_{i_1i_2\cdots i_{k+1}}(\tilde\Omega)
  \end{equation}
  and $C'_{k+2}(\tilde\Omega)=C_{k+2}(\tilde\Omega)\setminus C^*_{k+2}(\tilde\Omega)$.
Note that, by Proposition \ref{prop-em} in the Appendix,
\begin{equation}\label{eq-0}
dx_{i_1}\wedge dx_{i_2}\wedge\cdots\wedge dx_{i_{k+1}}(u_1,u_2,\cdots,u_{k+2})=0
\end{equation}
for any $\{u_1,u_2,\cdots,u_{k+2}\}\in C^*_{k+2}(\tilde\Omega)\setminus C_{i_1i_2\cdots i_{k+1}}(\tilde\Omega)$ and
\begin{equation}\label{eq-not-0}
dx_{i_1}\wedge dx_{i_2}\wedge\cdots\wedge dx_{i_{k+1}}(u_1,u_2,\cdots,u_{k+2})=\pm \frac{1}{(k+1)!}
\end{equation}
for any $\{u_1,u_2,\cdots,u_{k+2}\}\in C_{i_1i_2\cdots i_{k+1}}(\tilde\Omega)$. From this and by that $C_{i_1i_2\cdots i_{k+1}}(\tilde\Omega)\neq\emptyset$ for any $1\leq i_1<i_2<\cdots<i_{k+1}\leq n$, $\dim V=C_{n}^{k+1}$.

Moreover, by Proposition \ref{prop-compute} in the Appendix,
\begin{equation}
\delta (dx_{i_1}\wedge dx_{i_2}\wedge\cdots\wedge dx_{i_{k+1}})=0
\end{equation}
in $\Omega$. Furthermore, by \eqref{eq-0} and \eqref{eq-not-0},
\begin{equation*}
\begin{split}
&\vv<dx_{i_1}\wedge\cdots dx_{i_{k+1}},dx_{j_1}\wedge\cdots\wedge dx_{j_{k+1}}>_{\tilde\Omega}\\
=&\vv<dx_{i_1}\wedge\cdots dx_{i_{k+1}},dx_{j_1}\wedge\cdots\wedge dx_{j_{k+1}}>_{C^*_{k+2}(\tilde\Omega)}\\
&+\vv<dx_{i_1}\wedge\cdots dx_{i_{k+1}},dx_{j_1}\wedge\cdots\wedge dx_{j_{k+1}}>_{C'_{k+2}(\tilde\Omega)}\\
=&\frac{1}{(k+1)!^2}|C_{i_1i_2\cdots i_{k+1}}(\tilde\Omega)|\delta_{i_1i_2\cdots i_{k+1}}^{j_1j_2\cdots j_{k+1}}+\vv<dx_{i_1}\wedge\cdots dx_{i_{k+1}},dx_{j_1}\wedge\cdots\wedge dx_{j_{k+1}}>_{C'_{k+2}(\tilde\Omega)}.\\
\end{split}
\end{equation*}
On the other hand,
\begin{equation*}
\begin{split}
&\vv<\bN dx_{i_1}\wedge\cdots\wedge dx_{i_{k+1}},\bN dx_{i_1}\wedge\cdots\wedge dx_{i_{k+1}}>_{\p\Omega}\\
=&\sum_{\{v,u_1,u_2,\cdots,u_{k}\}\in C_{k+1}(\p\Omega)}\left(\sum_{u\in\Omega}dx_{i_1}\wedge\cdots\wedge dx_{i_{k+1}}(u,v,u_1,\cdots,u_{k+1})\right)^2\\
\leq& \frac{2^{n-k-1}}{(k+1)!^2}\sum_{\{v,u_1,u_2,\cdots,u_{k}\}\in C_{k+1}(\p\Omega)}|C_{k+2}(v,u_1,\cdots,u_{k})|\\
=&\frac{(k+1)2^{n-k-1}}{(k+1)!^2}|C_{k+2}(\p\Omega)|
\end{split}
\end{equation*}
where $C_{k+2}(v,u_1,\cdots,u_{k})$ means the set of $(k+2)$-cliques in $\tilde\Omega$ adjacent to $\{v,u_1,\cdots,u_k\}$. The inequality above comes from the fact that $dx_{i_1}\wedge\cdots\wedge dx_{i_{k+1}}(u,v,u_1,\cdots,u_{k+1})$ only takes values $\frac{1}{(k+1)!}$, $-\frac{1}{(k+1)!}$ or $0$ and the number of $(k+2)$-cliques $\{u,v,u_1,\cdots,u_{k+1}\}$ in $\tilde\Omega$ adjacent to $\{v,u_1,\cdots,u_k\}$ such that $dx_{i_1}\wedge\cdots\wedge dx_{i_{k+1}}(u,v,u_1,\cdots,u_{k+1})$ all equal to $\frac{1}{(k+1)!}$ or all equal to $-\frac{1}{(k+1)!}$ is at most $2^{n-k-1}$ (see Proposition \ref{prop-compute} in the Appendix).

Finally, as in the proof of Theorem \ref{thm-zero}, by Proposition \ref{prop-RS},
\begin{equation*}
\begin{split}
\sum_{i=1}^{C_n^{k+1}}\lambda_i(T_{\delta d}^{(k)}),\sum_{i=1}^{C_n^{k+1}}\lambda_i(T^{(k)})\leq{(k+1)2^{n-k-1}}\sum_{1\leq i_1<i_2<\cdots<i_{k+1}\leq n}\frac{|C_{k+2}(\p\Omega)|}{|C_{i_1i_2\cdots i_{k+1}}(\tilde\Omega)|}.
\end{split}
\end{equation*}
\end{proof}
\section{Appendix}
In this section, we give the proof of Proposition \ref{prop-exterior} and some details of computation in the proof of Theorem \ref{thm-gen}.

 Although the definitions in Section 2 and the facts in Proposition \ref{prop-exterior} are very similar to that of exterior calculus for simplicial complexes. In the first part of this section, we present the proof of Proposition \ref{prop-exterior} because our notations and definitions are slightly different with that for simplicial complexes.
\begin{proof}[Proof of Proposition \ref{prop-exterior}](1) It is just by simple direct computation.\\
(2) For any $\{v_0,v_1,\cdots,v_{r+s}\}\in C_{r+s+1}(G)$,
\begin{equation*}
\begin{split}
&(r+s+1)!\alpha\wedge\beta(v_0,v_1,\cdots,v_{r+s})\\
=&\sum_{\sigma\in S_{r+s+1}}\sgn(\sigma)\alpha(v_{\sigma(0)},v_{\sigma(1)},\cdots,v_{\sigma(r)})\beta(v_{\sigma(r)},v_{\sigma(r+2)},\cdots,v_{\sigma(r+s)})\\
=&\sum_{\sigma\in S_{r+s+1}}\sgn(\sigma)(-1)^\frac{r^2+r+s^2+s}{2}\beta(v_{\sigma(r+s)},v_{\sigma(r+s-1)},\cdots,v_{\sigma(r)})\alpha(v_{\sigma(r)},v_{\sigma(r-1)},\cdots,v_{\sigma(0)})\\
=&\sum_{\sigma\in S_{r+s+1}}\sgn(\sigma)(-1)^\frac{r^2+r+s^2+s}{2}\beta(v_{\sigma(\tau(0))},v_{\sigma(\tau(1))},\cdots,v_{\sigma(\tau(s))})\\
&\times\alpha(v_{\sigma(\tau(s))},v_{\sigma(\tau(s+1))},\cdots,v_{\sigma(\tau(r+s))})\\
=&\sum_{\sigma\in S_{r+s+1}}\sgn(\sigma\cdot\tau^{-1})(-1)^\frac{r^2+r+s^2+s}{2}\beta(v_{\sigma(0)},v_{\sigma(1)},\cdots,v_{\sigma(s)})\alpha(v_{\sigma(s)},v_{\sigma(s+1)},\cdots,v_{\sigma(r+s)})\\
=&\sum_{\sigma\in S_{r+s+1}}\sgn(\sigma)(-1)^\frac{r^2+r+s^2+s+(r+s)^2+(r+s)}{2}\beta(v_{\sigma(0)},v_{\sigma(1)},\cdots,v_{\sigma(s)})\\
&\times\alpha(v_{\sigma(s)},v_{\sigma(s+1)},\cdots,v_{\sigma(r+s)})\\
=&(-1)^{rs}(r+s+1)!\beta\wedge\alpha(v_0,v_1,\cdots,v_{r+s})
\end{split}
\end{equation*}
where $\tau=\left(\begin{array}{cccccc}0&1&2&\cdots&r+s-1&r+s\\r+s&r+s-1&r+s-2&\cdots&1&0
\end{array}\right).$\\
(3) For any  $\{v_0,v_1,\cdots,v_{r+s+1}\}\in C_{r+s+2}(G)$,
\begin{equation*}
\begin{split}
&d(\alpha\wedge\beta)(v_0,v_1,\cdots,v_{r+s+1})\\
=&\sum_{j=0}^{r+s+1}(-1)^j\alpha\wedge\beta(v_0,v_1,\cdots,\hat v_j,\cdots,v_{r+s+1})\\
=&\frac{1}{(r+s+1)!}\sum_{j=0}^{r+s+1}(-1)^j\sum_{\sigma\in S_{r+s+2}^j}\sgn(\sigma)\alpha\otimes\beta(v_{\sigma(0)},v_{\sigma(1)},\cdots,\hat v_j,\cdots,v_{\sigma(r+s+1)})\\
\end{split}
\end{equation*}
where $S_{r+s+2}^j=\{\sigma\in S_{r+s+2}\ |\ \sigma(j)=j\}.$
Moreover,
\begin{equation*}
\begin{split}
&d\alpha\wedge\beta+(-1)^r\alpha\wedge d\beta(v_0,v_1,\cdots,v_{r+s+1})\\
=&\frac{1}{(r+s+2)!}\sum_{\sigma\in S_{r+s+2}}\sgn\sigma d\alpha(v_{\sigma(0)},v_{\sigma(1)},\cdots,v_{\sigma(r+1)})\beta(v_{\sigma(r+1)},v_{\sigma(r+2)},\cdots,v_{\sigma(r+s+1)})+\\
&\frac{(-1)^r}{(r+s+2)!}\sum_{\sigma\in S_{r+s+2}}\sgn\sigma\alpha(v_{\sigma(0)},v_{\sigma(1)},\cdots,v_{\sigma(r)})d\beta(v_{\sigma(r)},v_{\sigma(r+1)},v_{\sigma(r+2)},\cdots,v_{\sigma(r+s+1)})\\
=&I_1+I_2+I_3+I_4\\
\end{split}
\end{equation*}
where
\begin{equation}\label{eq-I1}
\begin{split}
I_1=&\frac{1}{(r+s+2)!}\sum_{\sigma\in S_{r+s+2}}\sgn\sigma\sum_{j=0}^{r}(-1)^j\alpha(v_{\sigma(0)},\cdots,\hat v_{\sigma(j)},\cdots,v_{\sigma(r+1)})\\
&\times\beta(v_{\sigma(r+1)},v_{\sigma(r+2)},\cdots,v_{\sigma(r+s+1)})\\
=&\frac{1}{(r+s+2)!}\sum_{\sigma\in S_{r+s+2}}\sgn\sigma\sum_{j=0}^{r}(-1)^j\alpha\otimes\beta(v_{\sigma(0)},\cdots,\hat v_{\sigma(j)},\cdots,v_{\sigma(r+s+1)})\\
=&\frac{1}{(r+s+2)!}\sum_{j=0}^{r}(-1)^j\sum_{i=0}^{r+s+1}\sum_{\sigma\in S^{j,i}_{r+s+2}}\sgn\sigma\alpha\otimes\beta(v_{\sigma(0)},\cdots,\hat v_{\sigma(j)},\cdots,v_{\sigma(r+s+1)})\\
=&\frac{1}{(r+s+2)!}\sum_{j=0}^{r}(-1)^j\sum_{i=0}^{r+s+1}\sum_{\tau\in S^{i}_{r+s+2}}(-1)^{j-i}\sgn\tau\alpha\otimes\beta(v_{\tau(0)},\cdots,\hat v_{\tau(i)},\cdots,v_{\tau(r+s+1)})\\
=&\frac{r+1}{(r+s+2)!}\sum_{i=0}^{r+s+1}(-1)^{i}\sum_{\tau\in S^{i}_{r+s+2}}\sgn\tau\alpha\otimes\beta(v_{\tau(0)},\cdots,\hat v_{\tau(i)},\cdots,v_{\tau(r+s+1)})\\
\end{split}
\end{equation}
\begin{equation}
\begin{split}
I_2=\frac{(-1)^{r+1}}{(r+s+2)!}\sum_{\sigma\in S_{r+s+2}}\sgn\sigma\alpha(v_{\sigma(0)},\cdots,v_{\sigma(r)})\beta(v_{\sigma(r+1)},v_{\sigma(r+2)},\cdots,v_{\sigma(r+s+1)}),
\end{split}
\end{equation}
\begin{equation}
\begin{split}
I_3=&\frac{(-1)^r}{(r+s+2)!}\sum_{\sigma\in S_{r+s+2}}\sgn\sigma\alpha(v_{\sigma(0)},\cdots,v_{\sigma(r)})\beta(v_{\sigma(r+1)},v_{\sigma(r+2)},\cdots,v_{\sigma(r+s+1)})\\
=&-I_2,
\end{split}
\end{equation}
and similarly as in \eqref{eq-I1},
\begin{equation}
\begin{split}
I_4=&\frac{1}{(r+s+2)!}\sum_{\sigma\in S_{r+s+2}}\sgn\sigma\sum_{j=r+1}^{r+s+1}(-1)^j\alpha(v_{\sigma(0)},v_{\sigma(1)},\cdots,v_{\sigma(r)})\\
&\times\beta(v_{\sigma(r)},\cdots,\hat v_{\sigma(j)},\cdots,v_{\sigma(r+s+1)})\\
=&\frac{s+1}{(r+s+2)!}\sum_{i=0}^{r+s+1}(-1)^{i}\sum_{\tau\in S^{i}_{r+s+2}}\sgn\tau\alpha\otimes\beta(v_{\tau(0)},\cdots,\hat v_{\tau(i)},\cdots,v_{\tau(r+s+1)})\\
\end{split}
\end{equation}
with $S_{r+s+2}^{j,i}=\{\sigma\in S_{r+s+2}\ |\ \sigma(j)=i\}$. So,
\begin{equation}
d(\alpha\wedge\beta)=d\alpha\wedge\beta+(-1)^r\alpha\wedge d\beta.
\end{equation}
(4) For any $\{u_0,u_1,\cdots,u_{p+q+r}\}\in C_{p+q+r+1}(G)$,
\begin{equation*}
\begin{split}
&(\alpha\wedge\beta)\wedge\gamma(u_0,u_1,\cdots,u_{p+q+r})\\
=&\frac{1}{(p+q+r+1)!}\sum_{\sigma\in S_{p+q+r+1}}\sgn\sigma\alpha\wedge\beta(u_{\sigma(0)},\cdots,u_{\sigma(p+q)})\gamma(u_{\sigma(p+q)},\cdots,u_{\sigma(p+q+r)})\\
=&\frac{1}{(p+q+r+1)!(p+q+1)!}\sum_{\sigma\in S_{p+q+r+1}}\sgn\sigma\sum_{\tau\in S_{p+q+1}}\sgn\tau\alpha(u_{\sigma(\tau(0))},\cdots,u_{\sigma(\tau(p))})\\
&\times\beta(u_{\sigma(\tau(p))},\cdots,u_{\sigma(\tau(p+q))})\gamma(u_{\sigma(p+q)},\cdots,u_{\sigma(p+q+r)})\\
=&\frac{1}{(p+q+r+1)!(p+q+1)!}(I_1+I_2+I_3).
\end{split}
\end{equation*}
Here
\begin{equation*}
\begin{split}
I_1=&\sum_{i=0}^{p-1}\sum_{\sigma\in S_{p+q+r+1}}\sgn\sigma\sum_{\tau\in S_{p+q+1}^{i,p+q}}\sgn\tau\alpha(u_{\sigma(\tau(0))},\cdots,u_{\sigma(\tau(p))})\beta(u_{\sigma(\tau(p))},\cdots,u_{\sigma(\tau(p+q))})\\
&\times\gamma(u_{\sigma(p+q)},\cdots,u_{\sigma(p+q+r)})\\
=&\sum_{i=0}^{p-1}\sum_{\sigma\in S_{p+q+r+1}}\sgn\sigma\sum_{\tau\in S_{p+q+1}^{i,p+q}}\sgn\tau\sum_{j=0}^{p}(-1)^{p-j}\alpha(u_{\sigma(\tau(0))},\cdots,\widehat{u_{\sigma(\tau(j))}},\cdots,u_{\sigma(\tau(p+1))})\times\\
&\sum_{k=0}^q(-1)^{q-k}\beta(u_{\sigma(\tau(p))},\cdots,\widehat{u_{\sigma(\tau(p+k))}},\cdots,u_{\sigma(\tau(p+q))},u_{\sigma(\tau(i))})\gamma(u_{\sigma(p+q)},\cdots,u_{\sigma(p+q+r)})\\
=& J_1+J_2+J_3
\end{split}
\end{equation*}
where we have used the fact that $d\alpha=d\beta=0$, and
\begin{equation*}
\begin{split}
J_1=&\sum_{i=0}^{p-1}\sum_{\sigma\in S_{p+q+r+1}}\sgn\sigma\sum_{\tau\in S_{p+q+1}^{i,p+q}}(-1)^{p+q-i}\sgn\tau\\ &\times\alpha(u_{\sigma(\tau(0))},\cdots,\widehat{u_{\sigma(\tau(i))}},\cdots,u_{\sigma(\tau(p))},u_{\sigma(\tau(p+1))})\\
&\times\beta(u_{\sigma(\tau(p+1))},\cdots,u_{\sigma(\tau(p+q))},u_{\sigma(\tau(i))})\gamma(u_{\sigma(p+q)},\cdots,u_{\sigma(p+q+r)})\\
=&\sum_{i=0}^{p-1}\sum_{\sigma\in S_{p+q+r+1}}\sgn\sigma\sum_{\mu\in S_{p+q+1}^{p+q}}\sgn\mu \alpha(u_{\sigma(\mu(0))},\cdots,u_{\sigma(\mu(p))})\beta(u_{\sigma(\mu(p))},\cdots,u_{\sigma(\mu(p+q))})\\
&\times\gamma(u_{\sigma(p+q)},\cdots,u_{\sigma(p+q+r)})\\
\end{split}
\end{equation*}
with the relation of $\mu$ and $\tau$ given by $\mu(0)=\tau(0),\cdots, \mu(i-1)=\tau(i-1),\mu(i)=\tau(i+1),\cdots,\mu(p+q-1)=\tau(p+q),\mu(p+q)=\tau(i)=p+q$,
\begin{equation*}
\begin{split}
J_2=&\sum_{i=0}^{p-1}\sum_{\sigma\in S_{p+q+r+1}}\sgn\sigma\sum_{\tau\in S_{p+q+1}^{i,p+q}}(-1)^{p+q+1-i}\sgn\tau\\
&\times\alpha(u_{\sigma(\tau(0))},\cdots,\widehat{u_{\sigma(\tau(i))}},\cdots,u_{\sigma(\tau(p))},u_{\sigma(\tau(p+1))})\\
&\times\beta(u_{\sigma(\tau(p))},\widehat{u_{\sigma(\tau(p+1))}},\cdots,u_{\sigma(\tau(p+q))},u_{\sigma(\tau(i))})\gamma(u_{\sigma(p+q)},\cdots,u_{\sigma(p+q+r)})\\
=&\sum_{i=0}^{p-1}\sum_{\sigma\in S_{p+q+r+1}}\sgn\sigma\sum_{\tau\in S_{p+q+1}^{i,p+q}}(-1)^{p+q-i}\sgn\tau\\
&\times\alpha(u_{\sigma(\tau(0))},\cdots,\widehat{u_{\sigma(\tau(i))}},\cdots,u_{\sigma(\tau(p+1))},u_{\sigma(\tau(p))})\\
&\times\beta(u_{\sigma(\tau(p))},\widehat{u_{\sigma(\tau(p+1))}},\cdots,u_{\sigma(\tau(p+q))},u_{\sigma(\tau(i))})\gamma(u_{\sigma(p+q)},\cdots,u_{\sigma(p+q+r)})\\
=&-\sum_{i=0}^{p-1}\sum_{\sigma\in S_{p+q+r+1}}\sgn\sigma\sum_{\mu\in S_{p+q+1}^{p+q}}\sgn\mu \alpha(u_{\sigma(\mu(0))},\cdots,u_{\sigma(\mu(p))})\beta(u_{\sigma(\mu(p))},\cdots,u_{\sigma(\mu(p+q))})\\
&\times\gamma(u_{\sigma(p+q)},\cdots,u_{\sigma(p+q+r)})\\
=&-J_1,
\end{split}
\end{equation*}
with the relation of $\mu$ and $\tau$ given by $\mu(0)=\tau(0),\cdots,\mu(i-1)=\tau(i-1),\mu(i)=\tau(i+1),\cdots,\mu(p-2)=\tau(p-1),\mu(p-1)=\tau(p+1),\mu(p)=\tau(p),\mu(p+1)=\tau(p+2),\cdots,\mu(p+q-1)=\tau(p+q), \mu(p+q)=\tau(i)=p+q$, and
\begin{equation*}
\begin{split}
J_3=&(-1)^{q-1}\sum_{i=0}^{p-1}\sum_{\sigma\in S_{p+q+r+1}}\sgn\sigma\sum_{\tau\in S_{p+q+1}^{i,p+q}}\sgn\tau \alpha(u_{\sigma(\tau(0))},\cdots,\widehat{u_{\sigma(\tau(p))}},u_{\sigma(\tau(p+1))})\\
&\times\beta(u_{\sigma(\tau(p))},\widehat{u_{\sigma(\tau(p+1))}},\cdots,u_{\sigma(\tau(p+q))},u_{\sigma(\tau(i))})\gamma(u_{\sigma(p+q)},\cdots,u_{\sigma(p+q+r)})\\
=&\sum_{i=0}^{p-1}\sum_{\sigma\in S_{p+q+r+1}}\sgn\sigma\sum_{\tau\in S_{p+q+1}^{i,p+q}}(-1)^{p+1-i}\sgn\tau\\
&\times\alpha(u_{\sigma(\tau(0))},\cdots,\widehat{u_{\sigma(\tau(i))}},\cdots,\widehat{u_{\sigma(\tau(p))}},u_{\sigma(\tau(p+1))},u_{\sigma(\tau(i))})\\
&\times\beta(u_{\sigma(\tau(i))},u_{\sigma(\tau(p))},\widehat{u_{\sigma(\tau(p+1))}},\cdots,u_{\sigma(\tau(p+q))})\gamma(u_{\sigma(p+q)},\cdots,u_{\sigma(p+q+r)})\\
=&p\sum_{\sigma\in S_{p+q+r+1}}\sgn\sigma\sum_{\mu\in S_{p+q+1}^{p,p+q}}\sgn\mu \alpha(u_{\sigma(\mu(0))},\cdots,u_{\sigma(\mu(p))})\beta(u_{\sigma(\mu(p))},\cdots,u_{\sigma(\mu(p+q))})\\
&\times\gamma(u_{\sigma(p+q)},\cdots,u_{\sigma(p+q+r)})\\
=&p I_2,
\end{split}
\end{equation*}
with the relation of $\mu$ and $\tau$ given by $\mu(0)=\tau(0),\cdots,\mu(i-1)=\tau(i-1),\mu(i)=\tau(i+1),\cdots,\mu(p-2)=\tau(p-1),\mu(p-1)=\tau(p+1),\mu(p)=\tau(i)=p+q,\mu(p+1)=\tau(p),\mu(p+2)=\tau(p+2),\cdots,\mu(p+q)=\tau(p+q)$.
The other terms of $I_1$ vanish because there are the same pair of vertices in the arguments of $\alpha$ and $\beta$ in those terms.
\begin{equation*}
\begin{split}
I_2=&\sum_{\sigma\in S_{p+q+r+1}}\sgn\sigma\sum_{\tau\in S_{p+q+1}^{p,p+q}}\sgn\tau\alpha(u_{\sigma(\tau(0))},\cdots,u_{\sigma(\tau(p))})\beta(u_{\sigma(\tau(p))},\cdots,u_{\sigma(\tau(p+q))})\\
&\times\gamma(u_{\sigma(p+q)},\cdots,u_{\sigma(p+q+r)})\\
=&\sum_{\sigma\in S_{p+q+r+1}}\sgn\sigma\sum_{\tau\in S_{p+q+1}^{p,p+q}}\sgn\tau\sum_{i=0}^{p}(-1)^{p+q-i}\alpha(u_{\sigma(\tau(0))},\cdots,\widehat{u_{\sigma(\tau(i))}},\cdots,u_{\sigma(\tau(p+1))})\\
&\times\beta(u_{\sigma(\tau(p+1))},\cdots,u_{\sigma(\tau(p+q))},u_{\sigma(
\tau(p))})\gamma(u_{\sigma(p+q)},\cdots,u_{\sigma(p+q+r)})\\
=&\sum_{\sigma\in S_{p+q+r+1}}\sgn\sigma\sum_{\tau\in S_{p+q+1}^{p,p+q}}(-1)^{q}\sgn\tau\alpha(u_{\sigma(\tau(0))},\cdots,\widehat{u_{\sigma(\tau(p))}},u_{\sigma(\tau(p+1))})\\
&\times\beta(u_{\sigma(\tau(p+1))},\cdots,u_{\sigma(\tau(p+q))},u_{\sigma(
\tau(p))})\gamma(u_{\sigma(p+q)},\cdots,u_{\sigma(p+q+r)})\\
=&\sum_{\sigma\in S_{p+q+r+1}}\sgn\sigma\sum_{\mu\in S_{p+q+1}^{p+q,p+q}}\sgn\mu\alpha(u_{\sigma(\mu(0))},\cdots,u_{\sigma(\mu(p))})\beta(u_{\sigma(\mu(p))},\cdots,u_{\sigma(\mu(p+q))})\\
&\times\gamma(u_{\sigma(p+q)},\cdots,u_{\sigma(p+q+r)})\\
=&(p+q)!\sum_{\sigma\in S_{p+q+r+1}}\sgn\sigma\alpha(u_{\sigma(0)},\cdots,u_{\sigma(p)})\beta(u_{\sigma(p)},\cdots,u_{\sigma(p+q)})\\
&\times\gamma(u_{\sigma(p+q)},\cdots,u_{\sigma(p+q+r)})\\
=&(p+q+r+1)!(p+q)!\bA(\alpha\otimes\beta\otimes\gamma)(u_0,u_1,\cdots,u_{p+q+r+1})
\end{split}
\end{equation*}
with the relation of $\tau$ and $\mu$ given by $\mu(0)=\tau(0),\cdots,\mu(p-1)=\tau(p-1),\mu(p)=\tau(p+1),\cdots,\mu(p+q-1)=\tau(p+q),\mu(p+q)=\tau(p)=p+q$.

Finally,
\begin{equation*}
\begin{split}
I_3=&\sum_{\sigma\in S_{p+q+r+1}}\sgn\sigma\sum_{i=p+1}^{p+q}\sum_{\tau\in S_{p+q+1}^{i,p+q}}\sgn\tau\alpha(u_{\sigma(\tau(0))},\cdots,u_{\sigma(\tau(p))})\\
&\times\beta(u_{\sigma(\tau(p))},\cdots,u_{\sigma(\tau(p+q))})\gamma(u_{\sigma(p+q)},\cdots,u_{\sigma(p+q+r)})\\
=&\sum_{\sigma\in S_{p+q+r+1}}\sgn\sigma\sum_{i=p+1}^{p+q}\sum_{\tau\in S_{p+q+1}^{i,p+q}}(-1)^{p+q-i}\sgn\tau\alpha(u_{\sigma(\tau(0))},\cdots,u_{\sigma(\tau(p))})\\
&\times\beta(u_{\sigma(\tau(p))},\cdots,\widehat{u_{\sigma(\tau(i))}},\cdots,u_{\sigma(\tau(p+q))}, u_{\sigma(\tau(i))})\gamma(u_{\sigma(p+q)},\cdots,u_{\sigma(p+q+r)})\\
=&\sum_{i=p+1}^{p+q}\sum_{\sigma\in S_{p+q+r+1}}\sgn\sigma\sum_{\mu\in S_{p+q+1}^{p+q,p+q}}\sgn\mu\alpha(u_{\sigma(\mu(0))},\cdots,u_{\sigma(\mu(p))})\\
&\times\beta(u_{\sigma(\mu(p))},\cdots,u_{\sigma(\mu(p+q))})\gamma(u_{\sigma(p+q)},\cdots,u_{\sigma(p+q+r)})\\
=&q(p+q)!(p+q+r+1)!\bA(\alpha\otimes\beta\otimes\gamma)(u_0,u_1,\cdots,u_{p+q+r}),
\end{split}
\end{equation*}
with the relation of $\mu$ and $\tau$ given by $\mu(0)=\tau(0),\cdots,\mu(i-1)=\tau(i-1),\mu(i)=\tau(i+1),\cdots,\mu(p+q-1)=\tau(p+q),\mu(p+q)=\tau(i)=p+q$.

Combining the expressions above together, we have
\begin{equation}
(\alpha\wedge\beta)\wedge\gamma=\bA(\alpha\otimes\beta\otimes\gamma).
\end{equation}
Similarly, we have
\begin{equation}
\alpha\wedge(\beta\wedge\gamma)=\bA(\alpha\otimes\beta\otimes\gamma).
\end{equation}
This completes the proof of (4).
\end{proof}
In the second part of this section, we present some details of computation in the proof of Theorem \ref{thm-gen}. We first need the following relation of parallel differential forms on $\R^n$ and forms for graphs embedded into $\R^n$.
\begin{prop}\label{prop-em}
Let $G$ be a graph embedded in $\R^n$. Let $\Phi:\bigwedge^k(\R^n)^*\to A^{k}(G)$ be defined as follows:
\begin{equation}
(\Phi\omega)(u_0,u_1,\cdots,u_k)=\omega(u_1-u_0,u_2-u_0,\cdots,u_k-u_0)
\end{equation}
when $\{u_0,u_1,\cdots,u_k\}\in C_{k+1}(G)$. Then,
\begin{equation}
\Phi\alpha\wedge \Phi\beta=\frac{k!l!}{(k+l)!}\Phi(\alpha\wedge\beta).
\end{equation}
for any $\alpha\in \bigwedge^k(\R^n)^*$ and $\beta\in \bigwedge^l(\R^n)^*$.
\end{prop}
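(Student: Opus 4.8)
The plan is to first note that $\Phi$ does land in $A^{k}(G)$: the function $(u_{0},\dots,u_{k})\mapsto\omega(u_{1}-u_{0},\dots,u_{k}-u_{0})$ (extended by $0$ off cliques) is skew-symmetric, since a transposition of $u_{i},u_{j}$ with $i,j\geq1$ acts on it through the alternating property of $\omega$, while the transposition $u_{0}\leftrightarrow u_{1}$ turns the argument tuple into $\big(-(u_{1}-u_{0}),(u_{2}-u_{0})-(u_{1}-u_{0}),\dots\big)$, whose $\omega$-value is $-\omega(u_{1}-u_{0},\dots,u_{k}-u_{0})$ by multilinearity and the alternating property; these transpositions generate $S_{k+1}$. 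Hence both $\Phi\alpha\wedge\Phi\beta$ and $\tfrac{k!\,l!}{(k+l)!}\Phi(\alpha\wedge\beta)$ belong to $A^{k+l}(G)$, and since a form is determined by its values on cliques and is skew-symmetric, it suffices to fix an $(m{+}1)$-clique $\{u_{0},u_{1},\dots,u_{m}\}$ with $m=k+l$ and check the identity on the tuple $(u_{0},u_{1},\dots,u_{m})$.

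I would then unwind the left-hand side using $\Phi\alpha\wedge\Phi\beta=\bA(\Phi\alpha\otimes\Phi\beta)$ and the definition of the tensor product:
\begin{equation*}
\begin{split}
(\Phi\alpha\wedge\Phi\beta)(u_{0},\dots,u_{m})=\frac{1}{(m+1)!}\sum_{\sigma\in S_{m+1}}\sgn(\sigma)\,&\alpha\big(u_{\sigma(1)}-u_{\sigma(0)},\dots,u_{\sigma(k)}-u_{\sigma(0)}\big)\\
&\times\beta\big(u_{\sigma(k+1)}-u_{\sigma(k)},\dots,u_{\sigma(m)}-u_{\sigma(k)}\big).
\end{split}
\end{equation*}
Setting $w_{i}=u_{i}-u_{0}$ (so $w_{0}=0$ and $u_{a}-u_{b}=w_{a}-w_{b}$ for all $a,b$), I would expand each of $\alpha$ and $\beta$ multilinearly about the base points $w_{\sigma(0)}$ and $w_{\sigma(k)}$: every summand inserting a base point into two or more slots of the same form vanishes by the alternating property, and --- because $w_{0}=0$ --- every summand having $w_{0}$ as an argument of $\alpha$ or $\beta$ also vanishes. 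What survives is a signed sum of products $\alpha(w_{a_{1}},\dots,w_{a_{k}})\,\beta(w_{b_{1}},\dots,w_{b_{l}})$ indexed by disjoint subsets of $\{1,\dots,m\}$.

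To match this with the right-hand side I would use that, in the determinant normalization of the wedge on $\R^{n}$ (the one making the stated constant correct),
\begin{equation*}
(\alpha\wedge\beta)(v_{1},\dots,v_{m})=\frac{1}{k!\,l!}\sum_{\rho\in S_{m}}\sgn(\rho)\,\alpha(v_{\rho(1)},\dots,v_{\rho(k)})\,\beta(v_{\rho(k+1)},\dots,v_{\rho(m)}),
\end{equation*}
while $\Phi(\alpha\wedge\beta)(u_{0},\dots,u_{m})=(\alpha\wedge\beta)(w_{1},\dots,w_{m})$ straight from the definition of $\Phi$. Regrouping the surviving summands by their ordered pair of index sets, reordering arguments with the appropriate signs, and collecting, the whole sum should collapse to $(m+1)\,k!\,l!\,(\alpha\wedge\beta)(w_{1},\dots,w_{m})$; division by $(m+1)!$ then yields $\tfrac{k!\,l!}{(k+l)!}\Phi(\alpha\wedge\beta)(u_{0},\dots,u_{m})$, as desired. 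I expect the cleanest way through this bookkeeping is to first take $\alpha=\phi_{1}\wedge\cdots\wedge\phi_{k}$ and $\beta=\psi_{1}\wedge\cdots\wedge\psi_{l}$ decomposable, so that each $\alpha(w_{a_{1}},\dots,w_{a_{k}})$ is a determinant and both sides become Laplace-type expansions in the covectors $\phi_{i},\psi_{j}$, and then invoke bilinearity in $(\alpha,\beta)$.

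The main obstacle is exactly this last combinatorial step: proving that, after expansion, the net coefficient of the diagonal term $\alpha(w_{1},\dots,w_{k})\beta(w_{k+1},\dots,w_{m})$ is $m+1$ (this extra factor is the source of the discrepancy $\tfrac{k!\,l!}{(k+l)!}$ between the graph normalization $\bA$ and the determinant normalization) and that no other independent products remain. The vanishing of every term carrying the zero vector $w_{0}$ and the reduction to decomposable $\alpha,\beta$ are what make the count tractable; alternatively, one can shorten the whole computation by first rewriting $\Phi\omega(u_{0},\dots,u_{k})=\omega(u_{1}-u_{0},u_{2}-u_{1},\dots,u_{k}-u_{k-1})$, a consequence of multilinearity and the alternating property, which eliminates the common-base-point bookkeeping.
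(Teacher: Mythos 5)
Your plan is the same route the paper takes: evaluate both sides on a tuple spanning a $(k+l+1)$-clique, rewrite each argument about the common base point $u_0$ via $u_{\sigma(i)}-u_{\sigma(0)}=(u_{\sigma(i)}-u_0)+(u_0-u_{\sigma(0)})$ (and likewise for $\beta$ about $u_{\sigma(k)}$), expand multilinearly, and kill the terms that vanish by the alternating property or because $u_0-u_0=0$. Your target coefficient is also correct: with $m=k+l$, the surviving terms sum to $(m+1)\,k!\,l!\,(\alpha\wedge\beta)(w_1,\dots,w_m)$ under the determinant normalization of the wedge on $(\R^n)^*$, which after division by $(m+1)!$ gives the stated constant.

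The gap is that you stop precisely at the step that constitutes the proof: you label the count of surviving terms ``the main obstacle'' and only assert that the sum ``should collapse,'' and the bookkeeping you defer is not routine. The paper sorts the expansion into five families $I_1,\dots,I_5$ according to which slots receive the correction vectors $u_0-u_{\sigma(0)}$ and $u_0-u_{\sigma(k)}$. Then $I_1$ (no corrections) equals $k!\,l!\,\Phi(\alpha\wedge\beta)(u_0,\dots,u_m)$ because only permutations with $\sigma(0)=0$ survive; $I_2$ (a single correction in one of the $k$ slots of $\alpha$) equals $kI_1$ by swapping the values $\sigma(0)\leftrightarrow\sigma(i)$; $I_3$ (a single correction in a slot of $\beta$ only) and $I_4$ (corrections in a slot $i<k$ of $\alpha$ and a slot of $\beta$) are \emph{zero}, because transposing two indices that do not occur in the summand reverses $\sgn\sigma$ while fixing the term; and $I_5$ (correction in the $k$-th slot of $\alpha$ together with one slot of $\beta$) equals $lI_1$ after a three-index re-ordering. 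The total is $1+k+0+0+l=m+1$. Note that this is not the count a first guess suggests: the $l$ contributions attributable to the $\beta$ side come from the \emph{double}-correction terms $I_5$, while the single-correction-in-$\beta$ terms $I_3$ cancel entirely. To complete your argument you must either reproduce an analysis of this kind or genuinely carry out the decomposable-form/Laplace-expansion computation you sketch, verifying both the pairwise cancellations and the diagonal coefficient $m+1$; as written, the proposal identifies the right strategy and the right answer but does not prove the proposition.
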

\begin{proof}
For any $\{u_0,u_1,\cdots,u_k,u_{k+1},\cdots,u_{k+l}\}\in C_{k+l}(G)$,
\begin{equation*}
\begin{split}
&\Phi\alpha\wedge \Phi\beta(u_0,u_1,\cdots,u_{k+l})\\
=&\frac{1}{(k+l+1)!}\sum_{\sigma\in S_{k+l+1}}\sgn\sigma\cdot  \Phi\alpha(u_{\sigma(0)},u_{\sigma(1)},\cdots,u_{\sigma(k)})\Phi\beta(u_{\sigma(k)},\cdots,u_{\sigma(k+l)})\\
=&\frac{1}{(k+l+1)!}\sum_{\sigma\in S_{k+l+1}}\sgn\sigma\cdot\alpha(u_{\sigma(1)}-u_{\sigma(0)},\cdots,u_{\sigma(k)}-u_{\sigma(0)})\\
&\times\beta(u_{\sigma(k+1)}-u_{\sigma(k)},\cdots,u_{\sigma(k+l)}-u_{\sigma(k)})\\
=&\frac{1}{(k+l+1)!}\sum_{\sigma\in S_{k+l+1}}\sgn\sigma\cdot\alpha(u_{\sigma(1)}-u_0+u_0-u_{\sigma(0)},\cdots,u_{\sigma(k)}-u_0+u_0-u_{\sigma(0)})\\
&\times\beta(u_{\sigma(k+1)}-u_0+u_0-u_{\sigma(k)},\cdots,u_{\sigma(k+l)}-u_0+u_0-u_{\sigma(k)})\\
=&\frac{1}{(k+l+1)!}(I_1+I_2+I_3+I_4+I_5)\\
\end{split}
\end{equation*}
where
\begin{equation*}
\begin{split}
I_1=&\sum_{\sigma\in S_{k+l+1}}\sgn\sigma\cdot\alpha(u_{\sigma(1)}-u_0,\cdots,u_{\sigma(k)}-u_0)\beta(u_{\sigma(k+1)}-u_0,\cdots,u_{\sigma(k+l)}-u_0)\\
=&\sum_{\sigma\in S^{0}_{k+l+1}}\sgn\sigma\cdot\alpha(u_{\sigma(1)}-u_0,\cdots,u_{\sigma(k)}-u_0)\beta(u_{\sigma(k+1)}-u_0,\cdots,u_{\sigma(k+l)}-u_0),\\
=&k!l!\Phi(\alpha\wedge\beta)(u_0,u_1,\cdots,u_{k+l}),
\end{split}
\end{equation*}
\begin{equation*}
\begin{split}
I_2=&\sum_{i=1}^k\sum_{\sigma\in S_{k+l+1}}\sgn\sigma\cdot\alpha(u_{\sigma(1)}-u_0,\cdots,\widehat{u_{\sigma(i)}-u_0},u_0-u_{\sigma(0)},\cdots,u_{\sigma(k)}-u_0)\\
&\times\beta(u_{\sigma(k+1)}-u_0,\cdots,u_{\sigma(k+l)}-u_0)\\
=kI_1
\end{split}
\end{equation*}
by swapping the values of $\sigma(0)$ and $\sigma(i)$,
\begin{equation*}
\begin{split}
I_3=&\sum_{i=k+1}^{k+l}\sum_{\sigma\in S_{k+l+1}}\sgn\sigma\cdot\alpha(u_{\sigma(1)}-u_0,\cdots,u_{\sigma(k)}-u_0)\\
&\times\beta(u_{\sigma(k+1)}-u_0,\cdots,\widehat{u_{\sigma(i)}-u_0},u_0-u_{\sigma(k)},\cdots,u_{\sigma(k+l)}-u_0)\\
=&0
\end{split}
\end{equation*}
by swapping the values of $\sigma(i)$ and $\sigma(0)$,
\begin{equation*}
\begin{split}
I_4=&\sum_{i=1}^{k-1}\sum_{j=k+1}^{k+l}\sum_{\sigma\in S_{k+l+1}}\sgn\sigma\cdot\alpha(u_{\sigma(1)}-u_0,\cdots,\widehat{u_{\sigma(i)}-u_0},u_0-u_{\sigma(0)},\cdots,u_{\sigma(k)}-u_0)\\
&\times\beta(u_{\sigma(k+1)}-u_0,\cdots,\widehat{u_{\sigma(j)}-u_0},u_0-u_{\sigma(k)},\cdots,u_{\sigma(k+l)}-u_0)\\
=0
\end{split}
\end{equation*}
by swapping the value of $\sigma(i)$ and $\sigma(j)$, and
\begin{equation*}
\begin{split}
I_5=&\sum_{j=k+1}^{k+l}\sum_{\sigma\in S_{k+l+1}}\sgn\sigma\cdot\alpha(u_{\sigma(1)}-u_0,\cdots,u_{\sigma(k-1)}-u_0,u_0-u_{\sigma(0)})\\
&\times\beta(u_{\sigma(k+1)}-u_0,\cdots,\widehat{u_{\sigma(j)}-u_0},u_0-u_{\sigma(k)},\cdots,u_{\sigma(k+l)}-u_0)\\
=&\sum_{j=k+1}^{k+l}\sum_{\sigma\in S_{k+l+1}}\sgn\sigma\cdot\alpha(u_{\tau(1)}-u_0,\cdots,u_{\tau(k-1)}-u_0,u_{\sigma(k)}-u_0)\\
&\times\beta(u_{\tau(k+1)}-u_0,\cdots,u_{\tau(j)}-u_0,\cdots,u_{\tau(k+l)}-u_0)\\\\
=&l I_1
\end{split}
\end{equation*}
by re-ordering $\sigma$ to $\tau$ with $\tau(0)=\sigma(j)$, $\tau(k)=\sigma(0)$ ,$\tau(j)=\sigma(k)$ and $\tau(i)=\sigma(i)$ for $i\not\in\{0, k,j\}$.

Therefore,
\begin{equation}
\Phi\alpha\wedge\Phi\beta=\frac{k!l!}{(k+l)!}\Phi(\alpha\wedge\beta).
\end{equation}
\end{proof}

With the help of Proposition \ref{prop-em}, we give the details of computation in the proof of Theorem \ref{thm-gen}.
\begin{prop}\label{prop-compute}
Let $G$ be the graph of standard tessellation of $\R^n$, $1\leq k\leq n-1$.
 Then,
\begin{enumerate}
\item For each $1\leq i_1<i_2<\cdots<i_{k+1}\leq n$, $dx_{i_1}\wedge dx_{i_2}\wedge\cdots\wedge dx_{i_{k+1}}(u_0,u_1,\cdots,u_{k+1})=\pm\frac{1}{(k+1)!}$ or $0$.
\item For each $1\leq i_1<i_2<\cdots<i_{k+1}\leq n$ and each $(k+1)$-clique $\{u_0,\cdots,u_{k}\}$ in $G$,
\begin{equation}
\begin{split}
&\left|\left\{u\in G\ :\ dx_{i_1}\wedge\cdots dx_{i_{k+1}}(u_0,u_1,\cdots,u_{k},u)=\frac{1}{(k+1)!}\right\}\right|\\
=&\left|\left\{u\in G\ :\ dx_{i_1}\wedge\cdots dx_{i_{k+1}}(u_0,u_1,\cdots,u_{k},u)=-\frac{1}{(k+1)!}\right\}\right|\\
\leq& 2^{n-k-1}.
\end{split}
\end{equation}
\end{enumerate}
\end{prop}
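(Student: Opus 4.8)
The plan is to reduce both assertions to a single determinant formula obtained by iterating Proposition~\ref{prop-em}, and then to exploit the very rigid structure of cliques in the standard tessellation. Since $d(dx_i)=0$, Proposition~\ref{prop-exterior}(4) lets us associate iterated wedge products of the one-forms $dx_i$ freely, and an induction on the number of factors using Proposition~\ref{prop-em} (together with the fact that $\Phi(dx_i)$ is the graph one-form $dx_i$) gives
\[
dx_{i_1}\wedge\cdots\wedge dx_{i_{k+1}}=\frac{1}{(k+1)!}\,\Phi\bigl(dx_{i_1}\wedge\cdots\wedge dx_{i_{k+1}}\bigr),
\]
the wedge on the right being that of constant-coefficient forms on $\R^n$, normalised so that $dx_{i_1}\wedge\cdots\wedge dx_{i_{k+1}}$ evaluates to the usual determinant. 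Hence for any $(k+2)$-clique $\{p_0,p_1,\cdots,p_{k+1}\}$ of $G$,
\[
dx_{i_1}\wedge\cdots\wedge dx_{i_{k+1}}(p_0,\cdots,p_{k+1})=\frac{1}{(k+1)!}\det\bigl[(p_l-p_0)_{i_j}\bigr]_{1\le j,l\le k+1}.
\]

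Next I would record the relevant clique geometry: from the description of $E(G)$, of any two vertices of a clique one dominates the other coordinatewise, so the vertices of a clique are totally ordered by domination and, after translating the smallest one to the origin, all lie in $\{0,1\}^n$ with strictly increasing supports. For part~(1), apply this to $\{p_0,\cdots,p_{k+1}\}$: reordering the arguments only multiplies the value by the sign of the permutation, and after translating the columns of the matrix above become $\mathbf{1}_{T_m\cap I}$ for a strictly increasing chain $\emptyset=T_0\subsetneq T_1\subsetneq\cdots\subsetneq T_{k+1}$, where $I=\{i_1,\cdots,i_{k+1}\}$. Subtracting each column from the next replaces this by a matrix whose columns are indicators of pairwise disjoint subsets of the $(k+1)$-element set $I$; such a $0$--$1$ determinant is $0$ if a column vanishes and $\pm1$ (a permutation matrix) otherwise. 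This proves part~(1).

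For part~(2), fix a $(k+1)$-clique $\{u_0,\cdots,u_k\}$; we may assume $u_0\le\cdots\le u_k$ in the domination order (at worst this interchanges the two counts) and $u_0=0$, so $u_l\in\{0,1\}^n$ with supports $\emptyset=S_0\subsetneq S_1\subsetneq\cdots\subsetneq S_k$. Put $A_l=(S_l\setminus S_{l-1})\cap I$ for $1\le l\le k$. If some $A_l$ is empty the columns contributed by $u_1,\cdots,u_k$ are dependent, so $dx_{i_1}\wedge\cdots\wedge dx_{i_{k+1}}(u_0,\cdots,u_k,u)=0$ for every $u$ and there is nothing to prove; otherwise the $A_l$ are nonempty and pairwise disjoint, so $\bigl|I\setminus\bigcup_lA_l\bigr|\le1$, giving two regimes: either all $|A_l|=1$ and $\bigcup_lA_l$ omits a single element $i_*\in I\setminus S_k$, or exactly one $A_{l^*}=\{i_a,i_b\}$ has size two, the rest are singletons, and $S_k\supseteq I$. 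For a vertex $u$ making $\{u_0,\cdots,u_k,u\}$ a clique one classifies $u$ by its slot in the domination chain and checks, via the telescoped determinant of the previous step, that the value is $\pm\frac{1}{(k+1)!}$ exactly when each step of the enlarged chain adjoins exactly one element of $I$. In the first regime this forces $u$ to lie entirely below or entirely above $\{u_0,\cdots,u_k\}$, to agree with $u_0$ (resp.\ $u_k$) on $S_k$, to have $i_*$-coordinate $-1$ (resp.\ $+1$), and to be arbitrary on the remaining $n-|S_k|-1$ coordinates; this yields $2^{\,n-|S_k|-1}$ vertices in each of the two families, which carry opposite signs by a short computation of the sign of the permutation matrix, so each count equals $2^{\,n-|S_k|-1}\le 2^{\,n-k-1}$ since $|S_k|\ge k$. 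In the second regime the admissible $u$ sit between $u_{l^*-1}$ and $u_{l^*}$, with $\supp u$ obtained from $S_{l^*-1}$ by adjoining exactly one of $i_a,i_b$ together with an arbitrary subset of the $|S_{l^*}|-|S_{l^*-1}|-2$ coordinates of $S_{l^*}\setminus S_{l^*-1}$ outside $I$; swapping $i_a$ and $i_b$ transposes two columns of the permutation matrix, so the two counts agree, each equal to $2^{\,|S_{l^*}|-|S_{l^*-1}|-2}\le 2^{\,n-k-1}$, the last bound coming from $|S_{l^*-1}|\ge l^*-1$ and $|S_{l^*}|\le n-(k-l^*)$.

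The delicate point is the bookkeeping in the last paragraph: one must enumerate without omission the slots $u$ can occupy in the domination chain, discard those that make some chain-step avoid $I$ altogether, and keep precise enough track of the sign of the relevant permutation matrix to see that the positive and negative contributions pair off (equivalently, that $\delta(dx_{i_1}\wedge\cdots\wedge dx_{i_{k+1}})=0$ on $G$). Everything else is the determinant identity of the first paragraph together with elementary counting.
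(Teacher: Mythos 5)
Your proposal is correct and follows essentially the same route as the paper's proof: both reduce the evaluation to a determinant via Proposition~\ref{prop-em}, use the total (domination) order on cliques to write the vertices as a flag of supports, and split part~(2) into the same two regimes (all steps meeting $I$ in a singleton versus one step meeting $I$ in a pair), with identical counts $2^{n-|S_k|-1}$ and $2^{|S_{l^*}|-|S_{l^*-1}|-2}$. The only differences are cosmetic (column operations on the determinant instead of direct multilinearity, and handling a general position $l^*$ for the doubleton step where the paper normalizes it to $l^*=k$).
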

\begin{proof}
(1) Equip with $\mathbb Z^n$ the partial order: $x\leq y$ if $y_i-x_i\geq 0$ for all $i=1,2,\cdots,n$. Then, a $(k+2)$-clique $\{u_0,u_1,\cdots,u_{k+1}\}$ of $G$ is a totally ordered subset of $\mathbb Z^n$. Without loss of generality, suppose that $u_0<u_1<\cdots<u_{k+1}$. Then, there is a flag $\emptyset\subsetneqq I_1\subsetneqq  I_2\cdots\subsetneqq I_{k+1}$ of $\{1,2,\cdots,n\}$, such that
\begin{equation}
u_i-u_0=e_{I_i}
\end{equation}
for $i=1,2,\cdots,k+1$. Here, for a subset $I$ of $\{1,2,\cdots,n\}$, $e_I=\sum_{i\in I}e_i$. Then, by Proposition \ref{prop-em},
\begin{equation}
\begin{split}
&dx_{i_1}\wedge\cdots \wedge dx_{i_{k+1}}(u_0,u_1,\cdots,u_{k+1})\\
=&\frac{1}{(k+1)!}dx_{i_1}\wedge\cdots \wedge dx_{i_{k+1}}(e_{I_1},e_{I_2},\cdots,e_{I_{k+1}})\\
=&\frac{1}{(k+1)!}dx_{i_1}\wedge\cdots\wedge dx_{i_{k+1}}(e_{J_1},e_{J_2},\cdots,e_{J_{k+1}})\\
=&\left\{\begin{array}{ll}\frac{\sgn\sigma}{(k+1)!}&\{i_{\sigma(j)}\}=J_j\ \mbox{for $j=1,2,\cdots,k+1$}\\
0&\mbox{otherwise}
\end{array}\right.
\end{split}
\end{equation}
where $J_i=(I_i\setminus I_{i-1})\cap\{i_1,i_2,\cdots,i_{k+1}\}$ for $i=1,2,\cdots,k+1$ with $I_{0}=\emptyset$.\\
(2) By the same argument as in (1), we may assume that $u_0=o,u_1=e_{I_1},\cdots,u_{k}=e_{I_k}$ where $\emptyset\subsetneqq I_1\subsetneqq  I_2\cdots\subsetneqq I_{k+1}$ is a flag of $\{1,2,\cdots,n\}$.
Then, by Proposition \ref{prop-em},
\begin{equation}
\begin{split}
&dx_{i_1}\wedge\cdots \wedge dx_{i_{k+1}}(u_0,u_1,\cdots,u_{k},u)\\
=&\frac{1}{(k+1)!}dx_{i_1}\wedge\cdots \wedge dx_{i_{k+1}}(e_{I_1},e_{I_2},\cdots,e_{I_{k}},u)\\
=&\frac{1}{(k+1)!}dx_{i_1}\wedge\cdots\wedge dx_{i_{k+1}}(e_{J_1},e_{J_2},\cdots,e_{J_{k}},u)\\
\end{split}
\end{equation}
where $J_i=(I_i\setminus I_{i-1})\cap\{i_1,i_2,\cdots,i_{k+1}\}$ for $i=1,2,\cdots,k$ with $I_{0}=\emptyset$.

If there is some $J_i=\emptyset$, then $$dx_{i_1}\wedge\cdots\wedge dx_{i_{k+1}}(e_{J_1},e_{J_2},\cdots,e_{J_{k}},u)=0$$ for all $u$. The conclusion is clearly true.

If $J_i$ is not empty for any $i=1,2,\cdots, k$, only the following two cases happen.
\begin{enumerate}
\item[(i)] Each $J_i$ contains only one element in $\{i_1,i_2,\cdots,i_{k+1}\}$. In this case, without loss of generality, we can assume that $J_j=i_j$ for $j=1,2,\cdots,k$. Then, to make sure that $$dx_{i_1}\wedge\cdots dx_{i_{k+1}}(u_0,u_1,\cdots,u_{k},u)\neq 0,$$ we must have $u<o$ or $u>e_{I_k}$. So,
\begin{equation*}
\begin{split}
&\left\{u\in G\ :\ dx_{i_1}\wedge\cdots dx_{i_{k+1}}(u_0,u_1,\cdots,u_{k},u)=\frac{1}{(k+1)!}\right\}\\
=&\{e_{I_k}+e_{i_{k+1}}+e_J\ :\ J\subset I_{k}^{c}\setminus\{i_{k+1}\}\}.
\end{split}
\end{equation*}
and
\begin{equation*}
\begin{split}
&\left\{u\in G\ :\ dx_{i_1}\wedge\cdots dx_{i_{k+1}}(u_0,u_1,\cdots,u_{k},u)=-\frac{1}{(k+1)!}\right\}\\
=&\{-e_{i_{k+1}}-e_J\ :\ J\subset I_{k}^{c}\setminus\{i_{k+1}\}\}.
\end{split}
\end{equation*}
Hence
\begin{equation*}
\begin{split}
&\left|\left\{u\in G\ :\ dx_{i_1}\wedge\cdots dx_{i_{k+1}}(u_0,u_1,\cdots,u_{k},u)=\frac{1}{(k+1)!}\right\}\right|\\
=&\left|\left\{u\in G\ :\ dx_{i_1}\wedge\cdots dx_{i_{k+1}}(u_0,u_1,\cdots,u_{k},u)=-\frac{1}{(k+1)!}\right\}\right|\\
=&2^{n-|I_k|-1}\\
\leq& 2^{n-k-1}.
\end{split}
\end{equation*}
\item[(ii)] Some $J_i$ contains two elements of $\{i_1,i_2,\cdots,i_{k+1}\}$. Without loss of generality, we may assume that $J_1=\{i_1\},\cdots, J_{k-1}=\{i_{k-1}\}$ and $J_k=\{i_{k},i_{k+1}\}$. To make sure that
    $$dx_{i_1}\wedge\cdots dx_{i_{k+1}}(u_0,u_1,\cdots,u_{k},u)\neq 0,$$ we must have $e_{I_{k-1}}<u<e_{I_k}$. So
\begin{equation*}
\begin{split}
&\left\{u\in G\ :\ dx_{i_1}\wedge\cdots dx_{i_{k+1}}(u_0,u_1,\cdots,u_{k},u)=\frac{1}{(k+1)!}\right\}\\
=&\{e_{I_{k-1}}+e_{i_{k+1}}+e_J\ :\ J\subset I_k\setminus(I_{k-1}\cup\{i_k,i_{k+1}\})\}.
\end{split}
\end{equation*}
and
\begin{equation*}
\begin{split}
&\left\{u\in G\ :\ dx_{i_1}\wedge\cdots dx_{i_{k+1}}(u_0,u_1,\cdots,u_{k},u)=-\frac{1}{(k+1)!}\right\}\\
=&\{e_{I_{k-1}}+e_{i_{k}}+e_J\ :\ J\subset I_k\setminus(I_{k-1}\cup\{i_k,i_{k+1}\})\}.
\end{split}
\end{equation*}
Hence
\begin{equation*}
\begin{split}
&\left|\left\{u\in G\ :\ dx_{i_1}\wedge\cdots dx_{i_{k+1}}(u_0,u_1,\cdots,u_{k},u)=\frac{1}{(k+1)!}\right\}\right|\\
=&\left|\left\{u\in G\ :\ dx_{i_1}\wedge\cdots dx_{i_{k+1}}(u_0,u_1,\cdots,u_{k},u)=-\frac{1}{(k+1)!}\right\}\right|\\
=&2^{|I_k|-|I_{k-1}|-2}\\
\leq& 2^{n-k-1}
\end{split}
\end{equation*}
since $|I_k|\leq n$ and $|I_{k-1}|\geq k-1$.
\end{enumerate}
This completes the proof of the proposition.
\end{proof}


\begin{thebibliography}{99}
\bibitem{BS}Belishev, M.; Sharafutdinov, V. {\it Dirichlet to Neumann operator on differential forms.} Bull. Sci. Math. 132 (2008), no. 2, 128--145.
\bibitem{Br}Brock, F. {\it An isoperimetric inequality for eigenvalues of the Stekloff problem.}  Z. Angew. Math. Mech. 81 (2001), no. 1, 69--71.
\bibitem{Ca}Calder\'on, A.-P., {\it On an inverse boundary value problem. Seminar on Numerical Analysis and its Applications to Continuum Physics (Rio de Janeiro, 1980)}, pp. 65--73, Soc. Brasil. Mat., Rio de Janeiro, 1980.
\bibitem{Do}Dodziuk, J., {\it Finite-difference approach to the Hodge theory of harmonic forms.} Amer. J. Math. 98 (1976), no. 1, 79--104.
\bibitem{Es}Escobar, J. F., {\it  An isoperimetric inequality and the first Steklov eigenvalue.} J. Funct. Anal. 165 (1999), no. 1, 101--116.
\bibitem{GP}Girouard, Alexandre; Polterovich, Iosif, {\it Spectral geometry of the Steklov problem (survey article).} J. Spectr. Theory 7 (2017), no. 2, 321--359.
\bibitem{HH} Han, Wen; Hua, Bobo, {\it Steklov eigenvalue problem on subgraphs of integer lattics.} arXiv: 1902.05831.
\bibitem{HM} Hassannezhad, A.; Miclo, L., {\it Higher order Cheeger inequalities for Steklov eigenvalues.} arXiv:1705.08643.
\bibitem{HPS} Hersch, J.; Payne, L. E.; Schiffer, M. M., {\it Some inequalities for Stekloff eigenvalues.} Arch. Rational Mech. Anal. 57 (1975), 99--114.
\bibitem{HHW} Hua, Bobo; Huang, Yan; Wang, Zuoqin, {\it First eigenvalue estimates of Dirichlet-to-Neumann operators on graphs.} Calc. Var. Partial Differential Equations 56 (2017), no. 6, Art. 178, 21 pp.
\bibitem{HHW2} Hua, Bobo; Huang, Yan; Wang, Zuoqin, {\it Cheeger esitmates o Dirichlet-to-Neumann operators on infinite subgraphs of graphs.} arXiv: 1810.10763.
\bibitem{Ja}Jammes, P.,{\it Une in¨¦galit¨¦ de Cheeger pour le spectre de Steklov. (French) [A Cheeger inequality for the Steklov spectrum]} Ann. Inst. Fourier (Grenoble) 65 (2015), no. 3, 1381--1385.
\bibitem{JLYY}Jiang, Xiaoye; Lim, Lek-Heng; Yao, Yuan; Ye, Yinyu, {\it Statistical ranking and combinatorial Hodge theory.} Math. Program. 127 (2011), no. 1, Ser. B, 203--244.
\bibitem{JL}Joshi, M. S.; Lionheart, W. R. B.{\it An inverse boundary value problem for harmonic differential forms. } Asymptot. Anal. 41 (2005), no. 2, 93--106.
\bibitem{Ka}Karpukhin, Mikhail A. {\it Steklov problems on differential forms.} arXiv:1705.08951.

\bibitem{Ku}Kuznetsov, Nikolay; Kulczycki, Tadeusz; Kwa\'snicki, Mateusz; Nazarov, Alexander; Poborchi, Sergey; Polterovich, Iosif; Siudeja, Bart{\l}omiej. {\it The legacy of Vladimir Andreevich Steklov.} Notices Amer. Math. Soc. 61 (2014), no. 1, 9--22.
\bibitem{Li}Lim, L.-H., {\it Hodge Laplacians on graphs.} Electonic notes on the website: http://www.stat.uchicago.edu/~lekheng/work/psapm.pdf.
\bibitem{Pe}Perrin, H., {\it Lower bounds for the first eigenvalue of the Steklov problem on graphs.} Calc. Var. Partial Differential Equations 58 (2019), no. 2, 58:67.
\bibitem{RS1}Raulot, S.; Savo, A. {\it On the spectrum of the Dirichlet-to-Neumann operator acting on forms of a Euclidean domain.} J. Geom. Phys. 77 (2014), 1--12.
\bibitem{RS2}Raulot, S.; Savo, A. {\it On the first eigenvalue of the Dirichlet-to-Neumann operator on forms.} J. Funct. Anal. 262 (2012), no. 3, 889--914.
\bibitem{SY}Shi, Yongjie; Yu, Chengjie, {\it Trace and inverse trace of Steklov eigenvalues.} J. Differential Equations 261 (2016), no. 3, 2026--2040.
\bibitem{St}Stekloff, W., {\sl Sur les probl\`emes fondamentaux de la physique math\'ematique.}   Ann. Sci. \'Ecole Norm. Sup. (3) 19 (1902), 191--259.
\bibitem{Ul}Ulmann, G. {\it Electrical impedance tomography and Calder¡äon¡¯s
problem},http://www.math.washington.edu/~gunther/publications
/Papers/calderoniprevised.pdf
\bibitem{YY}Yang, Liangwei; Yu, Chengjie, {\it Estimates for higher Steklov eigenvalues.} J. Math. Phys. 58 (2017), no. 2, 021504, 9 pp.
\bibitem{YY2} Yang, Liangwei; Yu, Chengjie, {\it A higher dimensional generalization of Hersch-Payne-Schiffer inequality for Steklov eigenvalues.} J. Funct. Anal. 272 (2017), no. 10, 4122--4130.
\end{thebibliography}
\end{document}